\def\WHO{nobody}
\def\version{20.03.2023: 3pm}
\def\users{us}  %
\def\users{final-layout}
\definecolor{gray}{gray}{0.5}
\Large\color{red}\WHO's working on the file
\definecolor{labelkey}{rgb}{1.,.2,0.}
\newcommand{\stkout}[1]{\ifmmode\text{\sout{\ensuremath{#1}}}\else\sout{#1}\fi}
	\newcommand{\REPLACE}[2]{#2}
	\newcommand{\INSERT}[1]{}
	\newcommand{\COMMENT}[1]{}
	\newcommand{\COMMENTGT}[1]{}
	\newcommand{\TODO}[1]{}
	\newcommand{\INTERNAL}[1]{}
	\newcommand{\QUESTION}[1]{}
	\newcommand{\DELETE}[1]{}
	\newcommand{\REM}[1]{\marginpar{\bfseries\tiny{\color{blue}}}}
    \newcommand{\MARGINOTE}[1]{}
\newcommand{\REPLACE}[2]{{\color{red}\stkout{#1}\uline{#2}\color{black}}}
	\newcommand{\INSERT}[1]{{\color{blue}\uuline{#1}\color{black}}}
	\newcommand{\COMMENT}[1]{{\color{red}\uuline{#1}\color{black}}}
	\newcommand{\COMMENTGT}[1]{{\hfill\large\color{red}***{#1}***\color{black}\hfill}}
	\newcommand{\TODO}[1]{{\color{red}\uuline{#1}\color{black}}}
	\newcommand{\INTERNAL}[1]{\footnote{#1}}
	\newcommand{\QUESTION}[1]{{\color{brown}\uuline{#1}\color{black}}}
	\newcommand{\DELETE}[1]{{\color{red}\stkout{#1}\color{black}}}
	\newcommand{\REM}[1]{\marginpar{\bfseries\tiny{\color{blue}#1}}}
\newcommand{\MARGINOTE}[1]{\marginpar{\color{red}\tiny\texttt{#1}}}
\newcommand\DT[1]{\mathchoice
                 {{\buildrel{\hspace*{.1em}\text{\LARGE.}}\over{#1}}}
                 {{\buildrel{\hspace*{.1em}\text{\Large.}}\over{#1}}}
                 {{\buildrel{\hspace*{.1em}\text{\large.}}\over{#1}}}
                 {{\buildrel{\hspace*{.1em}\text{\large.}}\over{#1}}}}
\newcommand\pdt[1]{\frac{\partial{#1}}{\partial t}} %
\newcommand{\lineunder}[2]{\LU{\begin{array}[t]{c}\underbrace{#1}\vspace*{.5em}\end{array}}{\mbox{\footnotesize\rm #2}}}
\newcommand{\LU}[2]{\begin{array}[t]{c}#1\vspace*{-1em}\\_{#2}\end{array}}
\newcommand{\linesunder}[3]{\LSU{\begin{array}[t]{c}\underbrace{#1}\vspace*{.5em}\end{array}}{\mbox{\footnotesize\rm #2}}{\mbox{\footnotesize\rm #3}}}
\newcommand{\LSU}[3]{\begin{array}[t]{c}#1\vspace*{-1em}\\_{#2}\vspace*{-.5em}\\_{#3}\end{array}}
\newcommand{\morelinesunder}[4]{\LSUU{\begin{array}[t]{c}\underbrace{#1}\vspace*{.5em}\end{array}}{\mbox{\footnotesize\rm #2}}{\mbox{\footnotesize\rm #3}}{\mbox{\footnotesize\rm #4}}}
\newcommand{\LSUU}[4]{\begin{array}[t]{c}#1\vspace*{-1em}\\_{#2}\vspace*{-.5em}\\_{#3}\vspace*{-.5em}\\_{#4}\end{array}}
\newcommand{\Item}[2]{\parbox[t]{.055\textwidth}{#1}\hfill%
      \parbox[t]{.945\textwidth}{#2}\vspace*{.8mm}} 
\newcommand{\divS}{\mathrm{div}_{\scriptscriptstyle\textrm{\hspace*{-.1em}S}}^{}}
\newcommand{\nablaS}{\nabla_{\scriptscriptstyle\textrm{\hspace*{-.3em}S}}^{}}
\def\Vdots{\!\mbox{\setlength{\unitlength}{1em}
\begin{picture}(0,0)
\put(-.07,0){.}
\put(-.07,.3){.}
\put(-.07,.6){.}
\end{picture}\hspace*{.2em}}}
\numberwithin{equation}{section}
\newcounter{myfigure}
\newenvironment{my-picture}[3]{\refstepcounter{myfigure}\label{#3}\setlength{\unitlength}{1cm}\begin{picture}(#1,#2)}{\end{picture}}
  \def\bbI{{\mathbb I}}
\def\FG{\boldsymbol}
 \def\bb{{\FG b}}  
 \def\ee{{\FG e}} \def\ff{{\FG f}} 
\def\gg{{\FG g}}  
\def\jj{{\FG j}}   
\def\mm{{\FG m}} \def\nn{{\FG n}}  
 \def\tt{{\FG t}}  
\def\vv{{\FG v}} \def\ww{{\FG w}} \def\xx{{\FG x}} 
\def\yy{{\FG y}}  
 \def\BB{{\FG B}}  
\def\DD{{\FG D}} %
\def\FF{{\FG F}} 
 \def\KK{{\FG K}} \def\LL{{\FG L}}
 \def\TT{{\FG T}} %
 \def\WW{{\FG W}} \def\XX{{\FG X}}
\newcommand{\R}{\mathbb R}
\newcommand{\N}{\mathbb N}
\newcommand{\Nabla}{{\bm\nabla}}
\newcommand{\Fetop}{\Fe^{\!\!\top}}
\newcommand{\Fe}{{\FF_{\hspace*{-.2em}\mathrm e^{^{^{}}}}}}
\newcommand{\Fee}{\FF_{\hspace*{-.2em}\mathrm e}}
\newcommand{\Fp}{\FF_{\hspace*{-.2em}\mathrm p^{^{^{}}}}}
\newcommand{\widetildeLp}{\widetilde{\LL\,}_{\hspace*{-.2em}\mathrm p^{^{^{}}}}}
\newcommand{\Fpzero}{\FF_{\hspace*{-.2em}\mathrm p^{^{^{}}},0}}
\newcommand{\Fpp}{\FF_{\hspace*{-.2em}\mathrm p}}
\newcommand{\pl}{\partial}
\newcommand{\eq}[1]{(\ref{#1})}
\newcommand{\Lp}{\LL_{\mathrm p^{^{^{}}}}}
\newcommand{\Tp}{\TT_{\mathrm p^{^{^{}}}}}
\renewcommand{\d}{\mathrm d}  
\newcommand{\barvarOmega}{\hspace*{.2em}{\overline{\hspace*{-.2em}\varOmega}}}
\newcommand\dx{\operatorname{d}\!\xx}
\newcommand\dt{\operatorname{d}\! t}
\newcommand\xiek{{\bm\xi_{\EPS k}}}
\newtheorem{theorem}{Theorem}[section]
\newtheorem{definition}[theorem]{Definition}
\newtheorem{example}[theorem]{Example}
\newtheorem{remark}[theorem]{Remark}
\def\GRAVITY{\bm{g}}
\def\rhoR{\varrho_\text{\sc r}^{}}
\def\rhoRxi{\varrho_\text{\sc r}^{\bm\xi}}
\def\rhoRxieps{\varrho_\text{\sc r}^{\bm\xi_\EPS}}
\def\rhoRxizero{\varrho_\text{\sc r}^{\bm\xi_0}}
\def\rhoRxiepsk{\varrho_\text{\sc r}^{\bm\xi_{\EPS k}}}
\def\DTrhoR{\DT\varrho_\text{\sc r}^{}}
\def\COUPLING{\gamma}
\def\LAM{\lambda}
\def\W{w}
\def\OMEGA{\omega}
\def\EPS{\varepsilon}
\def\ALPH{\INSERT{\alpha}}
\def\ALPH{{\alpha}}
\def\ONEALPH{2}
\def\DIS{\DD}
\def\TWO{2}
\def\EXP{\mu}
\def\EE{{\bm e}}
\begin{document}\begin{sloppypar}
	
\allowdisplaybreaks

\vspace*{2em}

\begin{center}

{\LARGE\bf %
Inhomogeneous finitely-strained\\[-.1em]
thermoplasticity with hardening %
\\[.3em]
by an Eulerian approach\ }\footnote{This research has been partially from
the M\v SMT \v CR (Ministry of Education of the Czech Rep.) project
CZ.02.1.01/0.0/0.0/15-003/0000493, and the CSF project GA23-04676J,
and the institutional support RVO: 61388998 (\v CR). Supported by the
Italian INdAM-GNFM (Istituto Nazionale di Alta Matematica ¡V Gruppo Nazionale per la Fisica Matematica), the Grant of Excellence Departments,
MIUR-Italy (Art.1, commi 314-337, Legge
232/2016), and the Grant ``Mathematics of active materials: from mechanobiology to smart devices'' (PRIN 2017, prot. 2017KL4EF3) funded by the Italian MIUR.%
}

\bigskip\bigskip

{\sc Tom\'a\v{s} Roub\'\i\v cek${}^{(1),(2)}$\ \ \&\ \ Giuseppe Tomassetti${}^{(3)}$}

\bigskip

 ${}^{(1)}$Mathematical Institute, Charles University, 
             \\Sokolovsk\'a~83, CZ-186~75~Praha~8, Czech Republic,\\
  email: tomas.roubicek@mff.cuni.cz\\[0.8em]

    ${}^{(2)}$Institute of Thermomechanics, Czech Academy of Sciences,\\
    Dolej\v skova 5, CZ-182 00 Praha 8, Czech Republic.
    
\bigskip

 ${}^{(3)}$Department of Industrial, Electronic, and Mechanical Engineering, \\Roma Tre University,\\Via Vito Volterra 62,
 00146 Roma, Italy

email: giuseppe.tomassetti@uniroma3.it

\bigskip\bigskip

\begin{minipage}[t]{37em}{\small{\bf Abstract.}
\baselineskip=13pt
A standard elasto-plasto-dynamic model at finite strains
based on the Lie-Liu-Kr\"oner multiplicative decomposition,
formulated in rates, is here enhanced to cope with
spatially inhomogeneous materials by using
the reference (called also return) mapping. Also an isotropic
hardening can be involved. Consistent thermodynamics
is formulated, allowing for both the free and the
dissipation energies temperature dependent. The model
complies with the energy balance and entropy inequality.
A multipolar Stokes-like viscosity and plastic rate gradient are
used to allow for a rigorous analysis towards existence of weak
solutions by a semi-Galerkin approximation.

\medskip

\noindent {\it Keywords}: Elasto-plasto-dynamics, large strains, 
Kelvin-Voigt viscoelasticity, thermal coupling, multipolar continua,
semi-Galerkin discretization, weak solutions.
\medskip

\noindent {\it AMS Subject Classification:} 
35Q74, %
35Q79, %
74A15, %
74A30, %
74C20, %
74J30, %
80A20. %

}
\end{minipage}
\end{center}

\section{Introduction}

Plasticity at finite (also called large) strain and its thermomechanics is
a vital part of nonlinear continuum mechanics. It has been
vastly studied in literature, often in Lagrangian
(i.e.\ reference) configuration since there is a certain
belief that this kind of description is more suitable 
for solids than the Eulerian formulation in actual
evolving configurations.

Plasticity in many materials (especially metals) exhibits also
(isotropic) {\it hardening} acting on plastic distortion.
The philosophy of this article is
to complete the %
plasticity or creep as in \cite{Roub22QHLS} by temperature like, without plasticity,
done in \cite{Roub22TVSE} and use the reference (also called return) mapping
\cite{kamrin2012reference} in order to keep track of the deformation gradient
and to allow for inhomogeneous material. We also include the plastic distortion
tensor in order to involve isotropic hardening. 
The temperature-dependence
of the Maxwellian-type damping (or the plastic yield stress) allows
for melting/solidification phase transition like in \cite{Roub??SPTC}
for a linearized model.

Let us summarize the main attributes of the devised model:\\
\Item{{\bf---}}{Concept of {\it hyperelastic materials}
(whose conservative-stress
response comes from an energy) combined with the  Stokes damping into the
{\it Kelvin-Voigt viscoelastic rheology} and with {\it plasticity with hardening}.}
\Item{\bf---}{Deformation in actual configuration (i.e.\ {\it Eulerian
approach}) and corresponding evolution of the deformation gradient, cf.\
\eq{ultimate} below,}
\Item{{\bf---}}{The {\it rate formulation} in terms of velocity and deformation
gradient is used while the deformation itself does not explicitly occur.}
\Item{\bf---}{Lie-Liu-Kr\"oner {\it multiplicative decomposition} of the
deformation gradient to the elastic and the inelastic (plastic) strains, cf.\ 
\eq{Green-Naghdi}, with the plastic distortion being {\it isochoric}, i.e.\
having determinant equal 1,}
\Item{{\bf---}}{{\it Inhomogeneous material} in terms of free energy, dissipation
potential, and mass density in referential configuration by involvement
of the mapping from actual to a reference configuration.}
\Item{{\bf---}}{Mechanical consistency in the sense that
{\it frame indifference} of the free energy (which is in particular
{\it nonconvex} in terms of deformation gradient) is admitted, as well
as its {\it singularity} under infinite compression in relation with
{\it local non-interpenetration}.}
\Item{{\bf---}}{Thermodynamical consistency of the thermally coupled system
in the sense that the {\it total energy is conserved} in a closed system,
the {\it Clausius-Duhem entropy inequality}
holds, and temperature stays non-negative.}
\Item{{\bf---}}{Gradient theories, here applied on the dissipative potential,
in particuar the nonconservative part of the stress in the Kelvin-Voigt model
containing a higher-order component reflecting the concept of nonsimple
{\it multipolar media} is exploited.}
\Item{\bf---}{A dissipation potential acting on symmetric velocity gradient
and on the plastic distorsion rate, and making the latter
temperature dependent allows for modelling of usual plastic weakening under
higher temperatures.}
\Item{\bf---}{A variant with no hardening, describing a certain sort of
perfect plasticity \COMMENT{} with gradient of plastic distrortion rate 
or a creep in a Jeffreys viscoelastic rheology, is allowed. In the latter
case, {\it solidification or melting phase transitions} like
in \cite{Roub??SPTC} for a linearized model can be implemented.}
\Item{\bf---}{The evolution based on the 
evolution of the plastic distortion and the deformation gradient through the reference mapping,
in addition on the momentum equilibrium and on the flow rule of plastic distortion
through the plastic distorsion rate, and the heat-transfer equation.}
\Item{{\bf---}}{The model allows for rigorous mathematical analysis as far as
existence and certain regularity of energy-conserving weak solutions
concerns.}
\vspace*{.3em}

\noindent
As far as the non-negativity of temperature, below we will be able
to prove only that at least some solutions enjoy this attribute, although
there is an intuitive belief that all possible solutions will make it and
a hope that more advanced analytical techniques would rigorously prove it.

Although formulated in actual configuration, the data are taken
in a referential configuration. This is quite standard in
Eulerian continuum mechanics \cite{GuFrAn10MTC,Maug92TPF} and is more
suitable for analysis of thermodynamically coupled system as
in \cite{Roub22TVSE} because the stress does not involve the
pressure comming from the free energy. Considering data in referential
configuration allows naturally to consider spatially {\it inhomogenous
material}. Yet, the Eulerian description then needs to involve
the {\it reference mapping} (denoted by $\bm\xi$) from the actual
to the reference configuration. Actually, in theoretical continuum
mechanics, $\bm\xi$ is more often called a return mapping but it
should be noted that the notion of a ``return mapping'' is used in
plasticity in 
\cite{SNPeOw08CMP,Hash20NCMF,simo2006computational} in a different meaning
as a certain projection algorithm.

Having involved the reference mapping, we have immediately the
deformation gradient at disposal, cf.\ \eqref{defF}. Having
also plastic distortion explicitly in the formulation of the
model due to the mantioned hardening, we have also the elastic
distortion at disposal, cf.\ \eqref{Green-Naghdi}. Thus,
in contrast to \cite{Roub22QHLS}, we do not need to involve
the evolution-and-transport equation for elastic
distortion \eqref{DTFe}.
We condsider also inertial effects (i.e.\ the full {\it dynamical problem}),
which here helps also in analysis in estimation of adiabatic effects,
cf.\ Remark~\ref{rem-inert} below, although it brings various technicalities
itself.

The main notation used in this paper is summarized in the following table:

{\small
\vspace*{1em}
\hspace*{-1.7em}\fbox{
\begin{minipage}[t]{16em}%

$\vv$ Eulerian velocity (in m/s),

$\yy$ deformation (in m),

$\rhoR=\rhoR(\XX)$ referential mass density, %

 $\bm\xi$ reference mapping,

$\FF$ deformation gradient,

$\Fe$ elastic part of $\FF$,

$\Fp$ plastic distortion,

$\theta$ temperature (in K)

$\TT$ Cauchy stress (symmetric, in Pa),

$\mathfrak{H}$ hyperstress (in Pa\,m),

$\jj$ heat flux (in W/m$^2$),

$\ff$ traction load,

$\det(\cdot)$ determinant of a matrix,

${\rm Cof}(\cdot)$ cofactor matrix,

$\R_{\rm sym}^{d\times d}=\{A\in\R^{d\times d};\ A^\top=A\}$,

$\R_{\rm dev}^{d\times d}=\{A\in\R_{\rm sym}^{d\times d};\ {\rm tr}\,A=0\}$,

\end{minipage}\hspace*{0em}
\begin{minipage}[t]{24em}%

$\Lp=\DT\Fp\Fpp^{-1}$ plastic distortion rate (in s$^{-1}$),

$\psi=\psi(\XX,\FF,\theta)$ referential free energy (in J/m$^3$=Pa),

$\zeta=\zeta(\theta;\Lp)$ plastic dissipation potential,

$\varphi=\varphi(\XX,\Fe)$ referential stored energy (in J/m$^3$),

$\COUPLING=\COUPLING(\XX,\Fe,\theta)$ referential heat part of free energy,

$\OMEGA=\OMEGA(\XX,\Fe,\theta)$ thermal part of the internal energy,%

$\ee(\vv)=\frac12(\Nabla\vv)^\top\!+\frac12\Nabla\vv$ small strain rate (in s$^{-1}$),

$\DIS=\nu_0|\ee(\vv)|^{p-2}\ee(\vv)$ dissipative part of Cauchy stress,

$\W$ heat part of internal energy (enthalpy, in J/m$^3$),

$\cdot$ or $:$ scalar products of vectors or matrices, 

$\Vdots\ \ $ scalar products of 3rd-order tensors,

$\kappa=\kappa(\XX,\Fe,\theta)$ %
thermal conductivity (in W/m$^{-2}$K$^{-1}$),

$c=c(\XX,\Fe,\theta)$ referential heat capacity (in Pa/K),

$\nu_1>0$ a bulk hyper-viscosity coefficient,

$\GRAVITY$ external bulk load (gravity acceleration in m/s$^{2}$),

$(^{_{_{\bullet}}})\!\DT{^{}}=\pdt{}{^{_{_{\bullet}}}}+(\vv{\cdot}\Nabla)^{_{_{\bullet}}}$
convective time derivative.

\end{minipage}
}
}

\vspace*{1em}

The formulation of the model in actual Eulerian configuration and its
energetics and thermodynamics is presented in Section~\ref{sec-model}.
Then, in Section~\ref{sec-anal}, the rigorous analysis by a suitable
regularization and a (semi) Faedo-Galerkin approximation is performed.

\section{The model}\label{sec-model}

After recalling basic kinematics of finite-strain continuum mechanics
towards plasticity models, we will now formulate the model and derive
formally its energetics and thermodynamical consistency.

\subsection{Finite-strain kinematics}
In finite-strain continuum mechanics, the basic kinematical ingredient is a
time-evolving deformation $\yy(\cdot,t):\Omega\to\R^d$ (where $d$ is the chosen space dimension) as a mapping from the reference
configuration of the body $\Omega\subset\R^d$ into the physical space $\R^d$. In general, the image $\yy(\Omega,t)$ of the reference configuration is a time dependent domain. However, for analytical reasons, we shall confine ourselves to situations when the body is constrained at its boundary in such a way that the region $\varOmega=\yy(\Omega,t)$ occupied by the body does not depend on time. We shall denote by $\XX\in\Omega$ the typical point of the reference configuration (the so-called ``Lagrangian'' space variable) and by $\xx\in\varOmega$ the typical point of the current configuration (the so-called ''Eulerian'' space variable). 

It is customarily assumed that the deformation $\yy(\cdot,t)$ be an invertible function for each time $t$. We shall denote by $\bm\xi(\cdot,t):\varOmega\to\Omega$ the inverse of $\bm y(\cdot,t)$, so that
\begin{equation}\label{inverse}
	\bm\xi(\yy(\XX,t),t)=\XX.
\end{equation}
Given any spatial time dependent field $f(\xx,t)$ defined for all $\xx\in\varOmega$,
we shall denote its {\it material description} $f_\yy$ by
\begin{equation}\label{pullback}
	f_{\yy}(\XX,t)=f(\yy(\XX,t),t).
\end{equation}
Likewise, if $g(\XX,t)$ is a referential time dependent field, we use the notation
\begin{equation}\label{pushforward}
g^{\bm\xi}(\xx,t)=g(\bm\xi(\xx,t),t)	
\end{equation}
to denote the {\it spatial description} of the same field. With this %
notation, we can express the spatial velocity in terms of the deformation as
$\vv=(\pdt{}\yy)^{\bm\xi}$.
Moreover, we can define the material derivative of a spatial field $f$ as 
\begin{equation}\label{defmatder}
	\DT f=\Big(\frac{\partial f_{\yy}}{\partial t}\Big)^{\bm\xi}.
\end{equation}
The material derivative obeys the Leibnitz product rule, i.e.\
for two spatial fields $f$ and $g$, it holds
$\DT{\overline{fg}}=\DT f g+f\DT g$. Furthermore, by the chain rule,
\begin{equation}\label{matDer}
\DT f=\pdt f+\vv{\cdot}\nabla f.	
\end{equation}
Differentiating \eqref{inverse} with respect to $t$ and using the chain rule yields 
\begin{equation}\label{dt}
\DT{\bm\xi}:=\frac{\partial\bm\xi}{\partial t}+(\bm v\cdot\nabla)\bm\xi={\bm0}.	
\end{equation}
The spatial tensor field 
\begin{equation}\label{defF}
\FF=(\nabla\bm\xi)^{-1},
\end{equation}
provides the spatial description of the deformation gradient $\nabla\yy$, that is, $\FF=(\nabla\bm y)^{\bm\xi}$.
From the definition \eqref{matDer} we have $\DT{\overline{\nabla\bm\xi}}=\nabla\frac{\partial\bm\xi}{\partial t}+(\vv{\cdot}\nabla)\nabla\bm\xi$, whence, since $\bm\xi$ satisfies \eqref{dt}, $\DT{\overline{\nabla\bm\xi}}=-\nabla((\bm v{\cdot}\nabla)\bm\xi)+(\vv{\cdot}\nabla)\nabla\bm\xi$. By working with components, one realizes that $\nabla((\bm v{\cdot}\nabla)\bm\xi)=(\nabla\bm\xi)\nabla\bm v+(\bm v{\cdot}\nabla)\nabla\bm\xi$. Thus 
\begin{equation}\label{dtnabla}
\DT{\overline{\nabla\bm\xi}}=-(\nabla\bm\xi)\nabla\vv.	
\end{equation}
By \eqref{defF} we have $\FF\nabla\bm\xi=\mathbb I$ where $\mathbb I$ is the identity matrix. Taking the material derivative yields $\DT\FF\nabla\bm\xi+\FF\DT{\overline{\nabla\bm\xi}}=0$, that is $\DT\FF\FF^{-1}=-\FF\DT{\overline{\nabla\bm\xi}}$. Then, by \eqref{dtnabla} we arrive at the following
{\it transport equation} for the
deformation gradient:
\begin{align}
\DT\FF=(\nabla\vv)\FF\,.
  \label{ultimate}\end{align}
From this, we obtain the {\it evolution-and-transport equation} for the
Jacobian $J=\det\FF$ as
\begin{align}%
\label{DT-det}
\DT J%
&={\rm Cof}\FF{:}\DT\FF=J\FF^{-\top}{:}\DT\FF
=J\bbI{:}\DT\FF\FF^{-1}=J\bbI{:}\Nabla\vv= J{\rm div}\,\vv\,,
\end{align}
as well as the
{\it evolution-and-transport equation} for $1/\!J$ as
\begin{align}%
\DT{\overline{1/J}}\ 
=-({\rm div}\,\vv)/J\,.
\label{DT-det-1}\end{align}
Note that \eqref{DT-det-1} can also be rewritten as
\begin{equation}
	\pdt{}(1/J)=-\vv{\cdot}\nabla(1/J)-{\rm div}\vv/J=-{\rm div}((1/J)\vv)\,.\label{DT-det-1+}
\end{equation}
The mass density (in kg/m$^3$) is an extensive variable, and its transport
(expressing the conservation of mass) writes as the {\it continuity
equation} $\pdt{}\varrho+{\rm div}(\varrho\vv)=0$,
or, equivalently, the {\it mass transport equation}
$\DT\varrho=-\varrho\,{\rm div}\,\vv$.
One can determine the density $\varrho$ instead of 
this transport equation from the algebraic relation
\begin{align}
\varrho(\xx,t)=\rhoR(\bm\xi(\xx,t))/J(\xx,t)\,,
\label{density-algebraically}\end{align}
where $\rhoR$ is the mass density in the reference
configuration. Indeed, relying on \eq{DT-det}, we have
\begin{align}%
\DT\varrho/\varrho=
\big(\rhoR\,\,\DT{\overline{1/J}}
+\DTrhoR/J\big)J/\rhoR=-\DT J/J=-{\rm div}\,\vv
\label{towards-cont-eq}\end{align}
because $\DTrhoR=0$.

\subsection{Kinematics of finite-strain plasticity}
Our treatment of finite-strain plasticity will be based on the Kr\"oner-Lie-Liu
\cite{Kron60AKVE,LeeLiu67FSEP} {\it multiplicative
decomposition}\index{multiplicative decomposition}
\begin{align}\label{Green-Naghdi}
\FF=\Fe\Fp\,,
\end{align}
where $\Fp$ is a {\em plastic distortion} tensor. The plastic distortion tensor $\Fp$ is interpreted as a transformation of the reference configuration
into an intermediate stress-free configuration, which is then mapped into the current configuration by the
{\it elastic strain}\index{elastic strain} $\Fe$. 

Applying the material derivative to
\eq{Green-Naghdi} and using \eq{ultimate}, we obtain
$(\Nabla\vv)\FF=\DT\FF=\DT\Fe\Fp+\Fe\DT\Fp$. Multiplying both sides by $\FF^{-1}=\Fpp^{-1}\Fee^{-1}$, we obtain the following decomposition of the velocity gradient:
\begin{align}
\Nabla\vv=\!\!\!\!\!\!\morelinesunder{\DT\Fe\Fee^{-1}\!\!\!}{elastic\ \ }{distortion}{rate}\!\!\!+\Fe\!\!\!\!\!\!\!\morelinesunder{\DT\Fp\Fpp^{-1}\!\!\!\!}{plastic}{distortion}{rate $=:\Lp$}\!\!\!\!\Fee^{-1},
\label{dafa-formula}\end{align}
cf.\ e.g.\
\cite{BesGie94MMID,Dafa84PSCS,GurAna05DMSP,GuFrAn10MTC,HasYam13IFST,KhaHua95CTP,Lee69EPDF,MauEps98GMSE,RajSri04TRC,XiBrMe00CFEP}.
The form of the rate $\Lp=\DT\Fp\Fpp^{-1}$ which occurs in the dissipation
potential
is also compatible with the so-called plastic indifference, cf.\ e.g.\
\cite{Miel03EFME}. %
Then, by making use \eqref{dafa-formula}, we have
\begin{equation}\label{DTFe}
	\DT\Fe=(\nabla\vv)\Fe-\Fe\Lp. 
\end{equation}
Analogously to \eq{DT-det}, we have
\begin{align}\label{D/DT-of-determinant}
\DT{\overline{\det\Fp}}=(\det\Fp){\rm tr}(\DT\Fp\Fpp^{-1})=(\det\Fp){\rm tr}\Lp\,\ \ \ \text{ with }\ \Lp=\DT\Fp\Fpp^{-1}\,.
\end{align}
From this we can see that, to ensure $\det\Fp=1$, it suffices to ensure
${\rm tr}\Lp=0$ provided the initial inelastic deformation is isochoric.
It is important that the constraint ${\rm tr}\Lp=0$ is linear, in contrast to
the non-affine constraint $\det\Fp=1$.
Thus, we can implement it exactly in the dissipative
part by considering a (temperature-dependent) dissipation potential for $\Lp$
imposing this constraint, cf.\ \eqref{ass-inelastic-dissip} below.
Having $\det\Fp=1$ during the whole evolution,
we have $J=\det\FF=\det(\Fe\Fp)=\det\Fe\det\Fp=\det\Fe$ and 
\eq{density-algebraically} can be written as
\begin{align}
\varrho(\xx,t)=\det(\nabla\bm\xi(\xx,t))\rhoR({\bm\xi}(\xx),t)=\frac{\rhoR({\bm\xi}(\xx),t)}{\det\Fe(\xx,t)}\,.
\label{density-algebraically+}\end{align}

\subsection{Balance equations}
We derive the relevant balance equation through the principle of virtual powers. Having in mind dissipative effects associated to higher gradients
of the velocity and to the gradient of the plastic distortion rate, we consider a form in the internal power that depends on the second gradient of the virtual velocity and on the first gradient of the plastic distortion rate. Specifically, given a smooth subregion $\mathcal P\subset\varOmega$ we define the virtual power expended within $\mathcal P$ as
\begin{equation}\label{internal-power}
	\mathcal I(\mathcal P)[\widetilde\vv,\widetildeLp]=
	\int_{\mathcal P}\TT{:}\nabla\widetilde\vv
	+\mathfrak{H}\vdots\nabla^2\widetilde\vv+
	\Tp{:}\widetildeLp+\mathfrak{H}_{\rm p}\vdots\nabla\widetildeLp\,\d\xx,
\end{equation}
where $\widetilde\vv$ and $\widetildeLp$ are, respectively, a virtual velocity and a virtual plastic distortion field. Here $\TT$ is the Cauchy stress, $\mathfrak H$ is a hyperstress, $\Tp$ is the plastic microstress and $\mathfrak H_{\rm p}$ is a plastic higher-order stress. Next, we define the external power expended on $\mathcal P$ as
\begin{equation}
\mathcal W(\mathcal P)[\widetilde\vv,\widetildeLp]=\int_{\mathcal P}\bb\cdot\widetilde\vv \,\d\xx+\int_{\partial\mathcal P}\tt\cdot\widetilde\vv+\bm m\cdot\partial_\nn\widetilde\vv+\KK{:}\widetildeLp\,\d S\,.
\end{equation}
Here $\bb$ is the body force, $\tt$ is the surface traction, $\mm$ is a diffused hypertraction, and $\KK$ is a plastic microscopic traction conjugate to $\widetildeLp$. 
We then apply the principle of virtual powers by requiring that 
\begin{equation}\label{pvp2}
\mathcal I(\mathcal P)[\widetilde v,\widetildeLp]=\mathcal W(\mathcal P)[\widetilde v,\widetildeLp]
\end{equation}
for every part $\mathcal P\subset\varOmega$ and for every choice of the virtual velocities $(\widetilde\vv,\widetildeLp)$. Such requirement yields the balance equations
\begin{subequations}\label{pvp}
\begin{align}
	&{\rm div}(\TT-{\rm div}\mathfrak H)+\bb=0,\\
	&-\Tp+{\rm div}\mathfrak H_{\rm p}=0,
\end{align}
\end{subequations}
as well as the following relations between the stresses and the tractions 
\begin{subequations}
\begin{align}\label{pvp3}
&(\TT-{\rm div}\mathfrak H)\nn-\divS({\mathfrak H\nn})=\tt,\\ 
&\mathfrak H{:}(\nn{\otimes}\nn)=\mm,\\
&\mathfrak H_p\nn=\KK,
\end{align}
\end{subequations}
where $\divS={\rm tr}(\nablaS)$ is the surface divergence. 

Neglecting inertia, energy balance states that for every time dependent subregion $\mathcal P_t$ convecting with the body (i.e.\ a region obtained as the image, under the deformation, of a constant referential domain),
\begin{equation}\label{bal11}
\frac{\d}{\d t}\int_{\mathcal P_t}	J^{-1}e\,\d\xx
+\int_{\partial\mathcal P_t}\jj{\cdot}\nn\,\d S=\mathcal W(\mathcal P_t)[\vv,\Lp]+\int_{\mathcal P_t}q\,\d\xx\,,
\end{equation}
where $J=\det\FF$, $e$ is the internal energy per unit reference volume, $\jj$ is the heat flux, $\vv$ is the velocity, $\Lp=\DT\FF_p\Fp^{-1}$ is the rate of plastic flow as in \eq{dafa-formula}, and $q$ is the bulk heat supply. Using \eqref{pvp2} with $\widetilde\vv=\vv$ and $\widetildeLp=\Lp$ allows us to replace the external power expenditure $\mathcal W(\mathcal P_t)[\vv,\Lp]$ appearing on the right-hand side of \eqref{bal11} with the internal power expenditure $\mathcal I(\mathcal P_t)[\vv,\Lp]$. Then, by a standard localization argument based on the arbitrariness of $\mathcal P_t$ we obtain
\begin{equation}\label{enBal}
J^{-1}\DT e+{\rm div}\jj=\TT{:}\nabla\vv
	+\mathfrak{H}\vdots\nabla^2\vv+
	\Tp{:}\Lp+\mathfrak{H}_{\rm p}\vdots\nabla\Lp+q\,.
\end{equation}
We next turn our attention to the second law of thermodynamics in the form of entropy imbalance, which states that for evert part $\mathcal P_t$ convecting with the body
\begin{equation}
	\int_{\mathcal P_t}J^{-1}\eta\,\d\xx+\int_{\partial\mathcal P_t}\frac{\jj\cdot\nn}\theta\,\d S\ge \int_{\mathcal P_t}\frac q\theta\,\d\xx\,,
\end{equation}
where $\eta$ is the entropy per unit volume in the reference configuration and $\theta$ is the absolute temperature. Again, a standard localization argument yields
\begin{equation}
J^{-1}\DT\eta\ge -{\rm div}\left(\frac\jj 
\theta\right)+\frac q\theta=-\frac 1\theta{\rm div}\jj+\frac 1 {\theta^2}\nabla\theta\cdot\jj+\frac q\theta\,,
\end{equation}
whence
$J^{-1}\theta\DT\eta\ge-{\rm div}\jj+\frac 1 \theta\nabla\theta\cdot\jj+q$.	
On introducing the free energy 
\begin{equation}\label{free-energy}
\psi=e-\theta\eta,
\end{equation}	
 the above inequality can be rewritten as
\begin{equation}\label{entropyImbalance}
J^{-1}\DT e-J^{-1}\eta\DT\theta-J^{-1}\DT\psi\ge-{\rm div}\jj+\frac 1 \theta\nabla\theta\cdot\jj+q\,.
\end{equation}	
On combining \eqref{entropyImbalance} with \eqref{enBal}, we obtain
\begin{equation}\label{inequality}
\TT{:}\nabla\vv
	+\mathfrak{H}\vdots\nabla^2\vv+
	\Tp{:}\Lp+\mathfrak{H}_{\rm p}\vdots\nabla\Lp	\ge J^{-1}\eta\DT\theta+J^{-1}\DT\psi+\frac 1 \theta\nabla\theta\cdot\jj.
\end{equation}
We now assume that, at a given material point $\XX\in\Omega$, the free energy depends on the triplet $(\Fe,\Fp,\theta)$, through a function $\widehat\psi(\XX,\Fe,\Fp,\theta)$. Such dependence may change from one point to another. Then the time-dependent spatial field $\psi$ is given by $\psi(\xx,t)=\widehat\psi(\bm\xi(\xx,t),\Fe(\xx,t),\Fp(\xx,t),\theta(\xx,t))$. Using the notation \eqref{pushforward} we can write such relation in the compact form $\psi=\widehat\psi^{\bm\xi}(\Fe,\Fp,\theta)$. On taking the material time derivative and on using \eq{DTFe}, we find
 \begin{align}
 \DT\psi&=[\widehat\psi'_{\Fe}]^{\bm\xi}(\Fe,\Fp,\theta){:}\DT\Fe+[\widehat\psi'_{\Fp}]^{\bm\xi}(\Fe,\Fp,\theta){:}\DT\Fp+[\widehat\psi'_\theta]^{\bm\xi}(\Fe,\Fp,\theta)\DT\theta
 \nonumber\\
 &=[\widehat\psi'_{\Fe}]^{\bm\xi}(\Fe,\Fp,\theta)\Fetop{:}\nabla\vv-\Fetop [\widehat\psi'_{\Fe}]^{\bm\xi}(\Fe,\Fp,\theta){:}\Lp
 \nonumber
 \\
 &\hspace{1.5em}+[\widehat\psi'_{\Fp}]^{\bm\xi}(\Fe,\Fp,\theta)\Fp^\top{:}\Lp
 +[\widehat\psi'_\theta]^{\bm\xi}(\Fe,\Fp,\theta)\DT\theta.\label{mat-der-psi}
 \end{align}
 Here, using again the notation \eqref{pushforward}, we have set $[\widehat\psi'_\Fe]^{\bm\xi}(\Fe,\Fp,\theta)=\frac{\partial\widehat\psi}{\partial\Fe}(\bm\xi,\Fe,\Fp,\theta)$, and we have employed a similar notation for the other partial derivatives. On substituting \eqref{mat-der-psi} into \eqref{inequality} we arrive at the following specialized form of the second law:
\begin{align}\nonumber
	&(\TT-J^{-1}[\widehat\psi'_{\Fe}]^{\bm\xi}(\Fe,\Fp,\theta)\Fetop){:}\nabla\vv+\mathfrak H\vdots\nabla^2\vv
	\nonumber\\
	&+(\Tp+J^{-1}\Fetop[\widehat\psi'_{\Fe}]^{\bm\xi}(\Fe,\Fp,\theta)
	-J^{-1}[\widehat\psi'_{\Fp}]^{\bm\xi}(\Fe,\Fp,\theta)){:}\Lp\nonumber
        \\
        &+\mathfrak H_{\rm p}\Vdots\nabla\Lp-J^{-1}(\eta+[\widehat\psi'_{\theta}]^{\bm\xi}(\Fe,\Fp,\theta))\DT\theta-\frac 1 \theta\nabla\theta\cdot\jj\ge 0.\label{diss-ineq}
\end{align}
Motivated by \eqref{diss-ineq}, we adopt the following set of constitutive equations  
\begin{subequations}\label{const}
\begin{align}
 &\TT=J^{-1}[\widehat\psi'_{\Fe}]^{\bm\xi}(\Fe,\Fp,\theta)\Fetop+\nu_0|\ee(\vv)|^{p-2}\ee(\vv)\,,\\
 &\Tp\in {\rm dev}(J^{-1}[\widehat\psi'_{\Fp}]^{\bm\xi}(\Fe,\Fp,\theta)\Fp^\top-J^{-1}\Fetop[\widehat\psi'_{\Fe}]^{\bm\xi}(\Fe,\Fp,\theta))+J^{-1}\partial_{\Lp}\!\zeta(\theta;\Lp)\,,\\
 &\mathfrak H=\nu_1|\nabla\ee(\vv)|^{p-2}\nabla\ee(\vv)\,,\\
 &\mathfrak H_{\rm p}=\nu_2|\nabla\Lp|^{q-2}\nabla\Lp\,,
 \\
 &\jj=-\kappa^{\bm\xi}(\Fe,\theta)\nabla\theta\,,\ \ \text{ and}\\
 &\eta=-[\widehat\psi'_\theta]^{\bm\xi}(\Fe,\Fp,\theta)\,.
\end{align}
\end{subequations}
Here $\nu_i$, $i=0,1,2$ are positive constants, $q$ and $p$ are exponents greater than the space dimension, and
$\ee(\vv)={\rm sym}(\nabla\vv)$ is the symmetric part of the velocity gradient.
Furthermore, ${\rm dev}(\cdot)$ stands for the deviatoric part, $\partial_{\Lp}\!\zeta(\theta;\Lp)$ is the subdifferential of a possibly non-smooth dissipation potential
\begin{align}\label{ass-inelastic-dissip}
\zeta(\theta;\cdot):\R_{\rm dev}^{d\times d}\to[0,+\infty)\ ,
\ \ \zeta(\theta;0)=0,\ \
\end{align}
which we assume to be convex with respect to the plastic distortion rate $\Lp$. Moreover, $\kappa^{\bm\xi}(\Fe,\theta)=\kappa(\bm\xi,\Fe,\theta)$, where $\kappa(\XX,\Fe,\theta)>0$ is the thermal conductance, which we assume to be unaffected by plastic strain.
Having $\Lp$ valued in $\R_{\rm dev}^{d\times d}$ 
together with the assumption $\det\Fpzero=1$ where $\Fpzero$ is the initial
value of $\Fp$, we will have $\det\Fp=1$ during the whole evolution;
cf.\ \eqref{D/DT-of-determinant} or also  \cite[Sect.\,91.3]{GuFrAn10MTC}. 

Concerning the body forces $\bb$, we shall assume
$\bb=\varrho(\bm g-\DT\vv)$,
where $\gg:\varOmega\to\R^d$ is a prescribed acceleration field. Since mass density is given by \eqref{density-algebraically+}, the body force can be written as
$\bb={\rhoRxi(\gg-\DT\vv)}{\det(\nabla\bm\xi)}$.	
On substituting \eqref{const} into \eqref{pvp} we obtain the following system
of partial differential equations and inclusions:
\begin{subequations}\label{system-mech}
\begin{align}
	&\!\nonumber{\rhoRxi\DT\vv}\det(\nabla\bm\xi)+{\rm div}(\TT_{\rm e}+\DD-{\rm div}\mathfrak H)={\rhoRxi\gg}\det(\nabla\bm\xi)\ \ \
        \text{with }\ \TT_{\rm e}=J^{-1}[\widehat\psi'_{\Fe}]^{\bm\xi}(\Fe,\Fp,\theta)\Fetop,\\ &\hspace*{12em}\DD=\nu_0|\ee(\vv)|^{p-2}\ee(\vv)\,,\ \text{ and }\ \mathfrak H=\nu_1|\nabla\ee(\vv)|^{p-2}\nabla\ee(\vv),\\
&\!\partial_{\Lp}\!\zeta(\theta,\Lp)\ni {\rm dev}\Big(\Fetop[\widehat\psi'_{\Fe}]^{\bm\xi}(\Fe,\Fp,\theta){-}[\widehat\psi'_{\Fp}]^{\bm\xi}(\Fe,\Fp,\theta)\Big)
+J{\rm div}(\nu_2|\nabla\Lp|^{q-2}\nabla\Lp).\!\!
\end{align}
\end{subequations}
We now turn our attention to the energy balance \eqref{enBal}. We assume that the free energy splits additively into a part that depends on $\Fe$ and $\theta$ and a part that depends on $\Fp$ only, the latter accounting for hardening effects. 
\begin{equation}\label{psi-split}
  \widehat\psi(\XX,\Fe,\Fp,\theta)=\widehat\varphi(\XX,\Fe)+\widehat\COUPLING(\XX,\Fe,\theta)
  +\widehat\phi(\XX,\Fp)\ \ \ \text{ with }\ \ \widehat\COUPLING(\XX,\Fe,0)=0\,.
\end{equation}
Then accordingly to \eqref{free-energy}, the internal energy is given by $e=\widehat e^{\bm\xi}(\Fe,\Fp,\theta)$ with 
\begin{equation}\label{defe}
	\widehat e(\XX,\Fe,\Fp,\theta)=\widehat\varphi(\XX,\Fe)+\widehat\phi(\XX,\Fp)+\widehat\COUPLING(\XX,\Fe,\theta)-\theta\widehat\COUPLING'_\theta(\XX,\Fe,\theta).
\end{equation}
We now introduce the \emph{enthalpy} as
$w=J^{-1}(\widehat\COUPLING^{\bm\xi}(\Fe,\theta)-\theta[\widehat\COUPLING'_\theta]^{\bm\xi}(\Fe,\theta))$.
From the identity $J^{-1}\DT f=\DT{\overline{(J^{-1}f)}}-\DT{\overline{J^{-1}}}f=\DT{\overline{(J^{-1}f)}}+J^{-1}f\,{\rm div}\vv$
we have
\begin{equation}\label{dttt}
J^{-1}\DT{(\overline{\widehat\COUPLING^{\bm\xi}(\Fe,\theta)-\theta[\widehat\COUPLING'_\theta]^{\bm\xi}(\Fe,\theta)})}=\DT w+w{\rm div}\vv.
\end{equation}
From \eqref{defe}, by making use of \eqref{dttt} we obtain
$J^{-1}\DT e=J^{-1}[\widehat\varphi'_\Fe]^{\bm\xi}(\Fe){:}\DT\Fe+J^{-1}[\widehat\phi'_{\Fp}]^{\bm\xi}(\Fp){:}\DT\Fp+\DT w+w{\rm div}\vv$.
Thus, energy balance \eqref{enBal} becomes
\begin{align}\nonumber
	\DT w+w\,{\rm div}\vv+{\rm div}\jj&=\TT{:}\nabla\vv
	+\mathfrak{H}\Vdots\nabla^2\vv+
	\Tp{:}\Lp+\mathfrak{H}_{\rm p}\Vdots\nabla\Lp
\\&\ \ \ -J^{-1}[\widehat\varphi'_{\Fe}]^{\bm\xi}(\Fe){:}\DT\Fe
-J^{-1}[\widehat\phi'_{\Fp}]^{\bm\xi}(\Fp){:}\DT\Fp\,.
\label{enBal2}\end{align}
Using the constitutive equations \eqref{const}, and recalling the decomposition \eqref{dafa-formula},
we can rewrite the right-hand side of \eqref{enBal2} as:
\begin{align}
	\TT{:}&\nabla\vv
	+\mathfrak{H}\Vdots\nabla^2\vv+
	\Tp{:}\Lp+\mathfrak{H}_{\rm p}\Vdots\nabla\Lp
-J^{-1}[\widehat\varphi']^{\bm\xi}(\Fe){:}\DT\Fe-J^{-1}[\widehat\phi']^{\bm\xi}(\Fp){:}\DT\Fp\nonumber\\
&=\big(J^{-1}[\widehat\psi'_{\Fe}]^{\bm\xi}(\Fe,\Fp,\theta)\Fe^\top+\nu_0|\ee(\vv)|^{p-2}\ee(\vv)\big){:}\nabla\vv+\nu_1|\nabla\ee(\vv)|^{p-2}\nabla\ee(\vv)\vdots\nabla^2\vv\nonumber\\
&\qquad+\big({J^{-1}[\widehat\phi']^{\bm\xi}(\Fp)}\Fp^\top-J^{-1}\Fe^\top[\widehat\psi'_\Fe]^{\bm\xi}(\Fe,\Fp,\theta)\big){:}\Lp+J^{-1}\partial_{\Lp}\!\zeta(\theta;\Lp){:}\Lp\nonumber\\
&\qquad+\nu_2|\nabla\Lp|^{q-2}\nabla\Lp\vdots\Lp-J^{-1}[\widehat\varphi']^{\bm\xi}(\Fe){:}\DT\Fe-{J^{-1}[\widehat\phi']^{\bm\xi}(\Fp)}{:}\DT\Fp\nonumber\\
&={J^{-1}[\widehat\psi'_{\Fe}]^{\bm\xi}(\Fe,\Fp,\theta)\Fe^\top{:}\DT\Fe\Fe^{\!-1}}+{J^{-1}[\widehat\psi'_\Fe]^{\bm\xi}(\Fe,\Fp,\theta)\Fe^{\top}{:}\Fe\Lp\Fe^{\!-1}}\nonumber
\\
&\qquad+\nu_0|\ee(\vv)|^p+\nu_1|\nabla\ee(\vv)|^p-{J^{-1}\Fe^\top[\widehat\psi'_\Fe]^{\bm\xi}(\Fe,\Fp,\theta){:}\Lp}\nonumber\\
&\qquad+J^{-1}\partial_{\Lp}\!\zeta(\theta;\Lp){:}\Lp+\nu_2|\nabla\Lp|^q-
{J^{-1}[\widehat\varphi']^{\bm\xi}(\Fe){:}\DT\Fe}\nonumber\\
&={\displaystyle {J^{-1}[\widehat\varphi'_{\Fe}]^{\bm\xi}(\Fe){:}\DT\Fe}+[\widehat\gamma'_{\Fe}]^{\bm\xi}(\Fe,\theta){:}\DT\Fe}+{J^{-1}[\widehat\psi'_\Fe]^{\bm\xi}(\Fe,\Fp,\theta)\Fe^{\top}{:}\Fe\Lp\Fe^{\!-1}}\nonumber
\\
&\qquad+\nu_0|\ee(\vv)|^p+\nu_1|\nabla\ee(\vv)|^p-{J^{-1}\Fe^\top[\widehat\psi'_\Fe]^{\bm\xi}(\Fe,\Fp,\theta){:}\Lp}\nonumber\\
&\qquad+J^{-1}\partial_{\Lp}\!\zeta(\theta;\Lp){:}\Lp+\nu_2|\nabla\Lp|^q-
{J^{-1}[\widehat\varphi']^{\bm\xi}(\Fe){:}\DT\Fe}\nonumber\\
&=\nu_0|\ee(\vv)|^p+\nu_1|\nabla\ee(\vv)|^p+J^{-1}\partial_{\Lp}\!\zeta(\theta;\Lp)
+\nu_2|\nabla\Lp|^q+J^{-1}[\gamma'_\Fe]^{\bm\xi}(\Fe,\theta){:}\DT\Fe.
\end{align}
Thus, \eqref{enBal2} becomes
\begin{subequations}\label{heat-eq-in-w}\begin{align}
	&\DT w+w\,{\rm div}\vv+{\rm div}\jj=\xi_{\rm d}+J^{-1}[\widehat\gamma_\Fe']^{\bm\xi}(\Fe,\theta){:}\DT\Fe\,,
\intertext{where %
the dissipation rate density is%
}
&\xi_{\rm d}=\nu_0|\ee(\vv)|^{p}+\nu_1|\nabla\ee(\vv)|^p+\nu_2|\nabla\Lp|^q+J^{-1}\partial_{\Lp}\!\zeta(\theta;\Lp){:}\Lp.	
\end{align}\end{subequations}
In the ensuing treatment we shall remove hats from the functions $\widehat\psi$, $\widehat\varphi$, $\widehat\gamma$ and $\widehat\phi$.

\subsection{The system formulated in terms of the reference mapping}

The plasticity evolution will be formulated exclusively
in terms of rate $\Lp$ from \eq{dafa-formula}. In \cite{Roub22QHLS}, 
the plastic distortion $\Fp$ was eliminated 
and, multiplying \eq{dafa-formula} by $\Fe$, an evolution rule
for $\Fe$ even without any explicit occurrence of $\Fee^{-1}$ was used, namely
\eqref{DTFe}.
Here, $\Fp$ is explicitly needed due to the hardening. The definition of $\Lp$
in \eqref{dafa-formula} allows to reconstruct $\Fp$ from $\Lp$ if an initial
condition for $\Fp$ is prescribed by means  of the evolution-and-transport
equation
\begin{align}
\DT\Fp=\Lp\Fp\,.
\label{evol-of-P}\end{align}
Then one can re-construct $\Fe$ from $\FF=\Fe\Fp$ and $\FF^{-1}=\nabla\bm{\xi}$, i.e.
\begin{align}
\Fe=(\Fp\nabla\bm{\xi})^{-1}\,.
\label{Fe-from-Fp-xi}\end{align}
We will then effectively eliminate $\Fe$ from the formulation by
use of \eqref{Fe-from-Fp-xi} just to abbreviate the expression
$(\Fp\nabla\bm{\xi})^{-1}$ occuring many times and making thus
the equations shorter.
Moreover, as now $\DT\Fe$ is not explicitly used, we
substitute \eqref{DTFe} into \eqref{heat-eq-in-w}.

Relying on \eqref{Fe-from-Fp-xi}, as $\FF$ is eliminated from the system
\eqref{system-mech} and \eqref{heat-eq-in-w}, using the
reference mapping $\bm\xi$ is not straightforward. One should thus
reformulate the system rather in terms of $(\vv,\bm\xi,\Fp,\Lp,\theta)$ as 

\begin{subequations}\label{Euler-hypoplast-xi}
\begin{align}
\nonumber
     &%
   \rhoRxi{\rm det}(\nabla\bm\xi)\DT\vv={\rm div}
     (\TT_{\rm e}{+}\DD)
     +\rho^{\bm\xi}_\text{\sc r}\det(\nabla\bm\xi)\GRAVITY%
     \,
     \\ \nonumber &\hspace*{10em}\text{with } \
    \TT_{\rm e}=
(\det\Fe)^{-1}\big([\varphi'_{\Fe}]^{\bm\xi}(\Fe){+}[\gamma'_{\Fe}]^{\bm{\xi}}(\Fe,\theta)\big)\Fetop
\ %
    \\
    &\hspace*{10em}\text{and }\ 
    \DD=\nu_0|\ee(\vv)|^{p-2}\ee(\vv)-{\rm div}(\nu_1|\nabla\EE(\vv)|^{p-2}\nabla\EE(\vv))
\,,
\label{Euler1=hypoplast-xi}
\\\label{Euler2=hypoplast-xi}
&\DT{\bm\xi}={\bm0}\,,
\\\label{Euler3=hypoplast-xi}
&\DT\Fp=\Lp\Fp\,,
\\\nonumber
&\pl_{\Lp}^{}\zeta(\theta;\Lp)-
\frac{{\rm div}(\nu_2|\Nabla\Lp|^{q-2}\Nabla\Lp)}{\det(\nabla\bm\xi)}\ni
{\rm dev}\Big(\Fetop[\varphi_{\Fe}']^{\bm\xi}(\Fe)
\\[-.5em]&\hspace{19em}+\Fetop[\gamma_{\Fe}']^{\bm\xi}(\Fe,\theta)-[\phi'_{\Fp}]^{\bm\xi}(\Fp)\Fp^\top\Big),\label{Euler4=hypoplast-xi}
\\
& \pdt\W={\rm div}(\kappa^{\bm\xi}(\Fe,\theta)\nabla\theta)-\W{\rm div}\,\vv
+\nu_0|\ee(\vv)|^{p}+\partial_{\Lp}\!\zeta(\theta;\Lp){:}\Lp\nonumber
+\nu_1|\nabla\ee(\vv)|^p
\\&\qquad\quad
+\nu_2|\nabla\Lp|^q
+\det(\nabla\bm\xi)[\COUPLING'_{\Fe}]^{\bm\xi}(\Fe,\theta){:}\big(\nabla\vv\Fe{-}\Fe\Lp\big)\quad\text{ with }\ \ \W=\OMEGA^{\bm\xi}(\Fe,\theta), 
\nonumber
\\& \hspace{3em}\text{ where }\ \ 
\omega(\XX, \Fe,\theta)=\frac{
\COUPLING(\XX,\Fe,\theta){-}\theta\COUPLING'_\theta (\XX,\Fe,\theta)}{\det\Fe}
\ \text{ and }\ \Fe=(\Fp\nabla\bm\xi)^{-1}.\label{Euler5=hypoplast-xi}
\end{align}\end{subequations}
Note that the second term on the left-hand side of \eqref{Euler4=hypoplast-xi} is deviatoric. Indeed, using commas to denote partial differentiation, we can write ${{\rm div}(\nu_2|\Nabla\Lp|^{q-2}\Nabla{\Lp})}_{ij}=\nu_2{\LL_{\rm p}}_{ij,kk}+\nu_2(|\nabla{\LL_{\rm p}}^{q-2}|)_{,k}{\LL_{\rm p}}_{ij,k}$, which is the sum of two deviatoric terms.

The above system is to be completed by suitable initial and 
boundary conditions. As to the initial conditions, we take:
\begin{equation}\label{ic}
\vv(0)=\vv_0,\ \ \ \ \bm\xi(0)=\bm\xi_0,\ \ \ \
\FF_{{\rm p}}(0)=\FF_{{\rm p}},\ \ \text{ and }\ \ \theta(0)={\theta_0}.
\end{equation}
Having in mind the so-called sticky-air approach, we consider here the
boundary conditions,
\begin{subequations}\label{Euler-hypoplast-xi-BC}
\begin{align}\label{Euler-hypoplast-xi-BC1}
&\vv
=\bm0,\ \ 
\Nabla\EE(\vv){{:}}(\nn{\otimes}\nn)={\bm0}\,,\ \ 
\nn{\cdot}\nabla\Lp={\bm0},\ \ \text{ and }\ \ \
\\
&
\nn{\cdot}\kappa(\Fe,\theta)\nabla\theta=h(\theta)\,.\label{eq:10}
\end{align}\end{subequations}
We note that the first condition fixes the boundary displacement, which allows to
conclude global invertibility of the reference map when using the results
of \cite{Ball81GISF}, although we will not use this result in the present paper.

\def\GM{M}
\def\varsigma{c_1}

\begin{example}[{\sl Neo-Hookean material}]\upshape
For illustration of the structure of the model,
let us consider the data
\begin{subequations}\begin{align}\label{neo-Hookean??}
&\varphi(\Fe)=\frac12K_\text{\sc e}^{}\big(J-1\big)^2+
\frac12G_\text{\sc e}^{}\big(J^{-2/d}{\rm tr}(\Fe\Fetop)-d\big)\ \ \text{ with }\ J=\det\Fe\,,
\\&\phi(\Fp)=\frac12H_\text{\sc e}^{}|\Fp|^2\,,
\\&\COUPLING(\Fe,\theta)=c\theta({\rm ln}\theta{-}1)-\varsigma J\theta^\alpha\ \text{ so that} \ \OMEGA(\Fe,\theta)=c\theta/J+\varsigma(\alpha{-}1)\theta^\alpha\ \text{ with }\ \alpha>1
\\&\zeta(\theta;\Lp)=\frac12\GM(\theta)|\Lp|^2\,.
                    \end{align}\end{subequations}
\begin{center}
\begin{my-picture}{35}{5.2}{fig1}
\psfrag{e}{\small $\FF$}
\psfrag{s}{\small $\sigma$}
\psfrag{f}{\small $\GRAVITY$}
\psfrag{s1}{\small $\sigma_{\rm sph}$}
\psfrag{s2}{\small $\sigma_{\rm dev}$}
\psfrag{e1}{\small $J^{-1/d}\FF$}
\psfrag{e2}{\footnotesize $J^{1/d}\bbI$}
\psfrag{e3}{\small $J^{-1/d}\Fe$}
\psfrag{C1}{\small$K_\text{\sc e}^{}$}
\psfrag{C2}{\small$G_\text{\sc e}^{}$}
\psfrag{C3}{\small$H_\text{\sc e}^{}$}
\psfrag{D1}{\small$G_\text{\sc v}^{}$}
\psfrag{D2}{\small$K_\text{\sc v}^{}$}
\psfrag{D3}{\small$\GM$}
\psfrag{q}{\small $\!\!\theta$}
\psfrag{a}{\small $\alpha$}
\psfrag{P}{\small $\Fp$}
\psfrag{spherical part}{\!\!\!\!\footnotesize\sf\begin{minipage}[t]{10em}\hspace*{0em}volumetric part
\\\hspace*{0em}(Kelvin-Voigt)
\end{minipage}}
\psfrag{deviatoric part}{\!\!\!\!\footnotesize\sf\begin{minipage}[t]{10em}\hspace*{0em}isochoric part
  \\[-.1em]\hspace*{0em}(4-parameter solid
  \\[-.1em]\hspace*{0em}or, if $H_\text{\sc e}^{}=0$,
 \\[-.1em]\hspace*{0em}Jeffreys' rheology)
\end{minipage}}
\hspace*{2em}\includegraphics[width=35em]{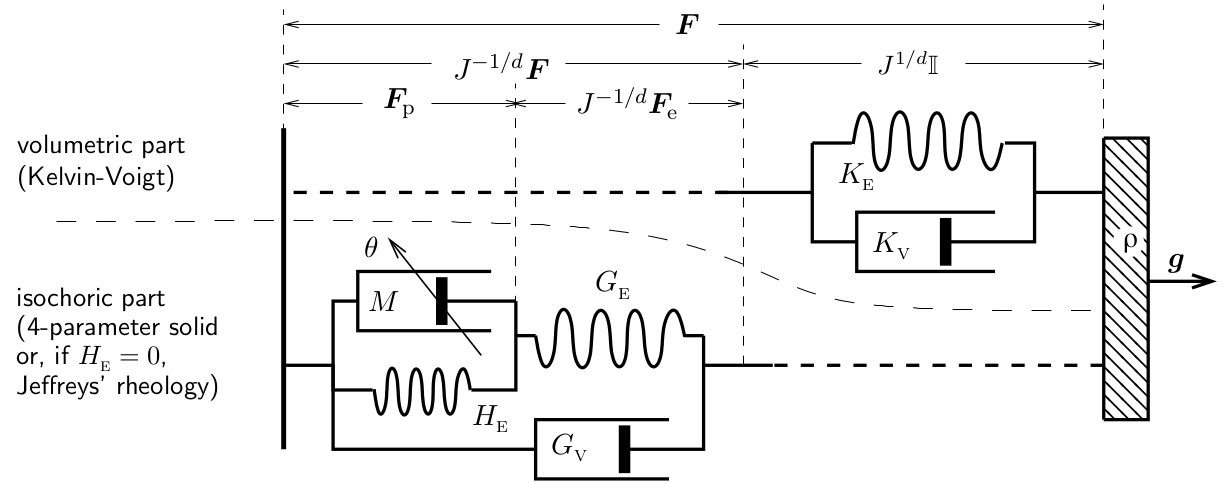}
\end{my-picture}
\nopagebreak
    {\small\sl\hspace*{-.5em}Fig.~\ref{fig1}:~\begin{minipage}[t]{34em}
A schematic 0-dimensional diagramme of the mixed rheology acting
differently on spherical (volumetric) and the deviatoric parts.
The Jeffreys rheology in the deviatoric part combines
Stokes' and Maxwell rheologies in parallel and may degenerate
to mere Stokes-type fluid if $\GM$ vanishes within melting.
\end{minipage}
} 
\end{center}
Then $\Fetop[\COUPLING_{\Fe}']^{\bm\xi}/\det\Fe=[\COUPLING_{\Fe}']^{\bm\xi}\Fetop/\det\Fe=\varsigma\theta^\alpha{\rm Cof}(\Fe)\Fetop/{\det}(\Fe)
=-\varsigma\theta^\alpha\bbI$
so that \eqref{Euler-ass-adiab+} below is satisfied.
Note that $(\det\FF)^{-2/d}{\rm tr}(\FF\FF^\top)=(\det\BB)^{-1/d}{\rm tr}\BB$ is the
first invariant of the isochoric part $(\det\BB)^{-1/d}\BB$ of the left
Cauchy-Green tensor $\BB=\FF\FF^\top$. Thus $\varphi$
is frame indifferent
because it depends on $\BB$ and the volumetric variations influences
only the $K_\text{\sc e}^{}$-term. %
Then, having the isochoric inelastic strain, the model separates the
volumetric (spherical) and the deviatoric parts so that it combines
the Kelvin-Voigt rheology in the volumetric part with the 
Jeffreys rheology (if there is no hardening) or a 4-parameter solid in the deviatoric part, cf.~Figure~\ref{fig1}.

\end{example}

\subsection{Energetics behind the system \eqref{Euler-hypoplast-xi}}
\label{sec-energetics}
The mechanical energy balance of the model can be seen when testing the
momentum equation by $\vv$ and the flow rule by $\Lp$, integrating over $\varOmega$ and summing up the results, as detailed below. Recalling that $J=\det\Fe$ since $\det\Fp=1$, using the algebra
$F^{-1}={\rm Cof}\,F^\top/\!\det\,F$ and the calculus $\det'(F)={\rm Cof}\,F$,
we can write 
\begin{align}\nonumber
	&J^{-1}\big[\varphi'_{\Fe}\big]^{\bm\xi}(\Fe)\Fetop= \frac{\big[\varphi'_{\Fe}\big]^{\bm\xi}(\Fe)}{\det\Fe}\Fetop
	=\ \frac{\big[\varphi'_{\Fe}\big]^{\bm\xi}(\Fe)-\varphi^{\bm\xi}(\Fe)\Fe^{-\top}\!\!\!\!}{\det\Fe}\Fetop\!+
	\frac{\varphi^{\bm\xi}(\Fe)}{\det\Fe}\bbI
	\\&\qquad=\bigg(\frac{\big[\varphi'_{\Fe}\big]^{\bm\xi}(\Fe)}{\det\Fe}
	-\frac{\varphi^{\bm\xi}(\Fe){\rm Cof}\Fe}{(\det\Fe)^2}\bigg)\Fetop\!\!+
	\frac{\varphi^{\bm\xi}(\Fe)}{\det\Fe}\bbI
	=\Big[\frac{\varphi^{\bm\xi}(\Fe)}{\det\Fe}\Big]'_{\Fe}\!\Fetop\!\!+\frac{\varphi^{\bm\xi}(\Fe)}{\det\Fe}\bbI\,.
	\label{referential-stress}\end{align}
A similar calculation yields
\begin{align}\label{eq:44}
	J^{-1}\Fetop\big[\varphi'_{\Fe}\big]^{\bm\xi}(\Fe) =\Fetop\Big[\frac{\varphi^{\bm\xi}(\Fe)}{\det\Fe}\Big]'_{\Fe}+\frac{\varphi^{\bm\xi}(\Fe)}{\det\Fe}\bbI\,.
	\end{align}
Using the calculation in \eq{referential-stress}, we can write%
\begin{align}\nonumber
	&\int_\varOmega{\rm div}\,\TT_{\rm e}{\cdot}\vv\,\d\xx
	=\!\int_\varGamma\underbrace{(\TT_{\rm e}\nn){\cdot}\vv}_{\displaystyle=0}\,\d S-\!\int_\varOmega\!\TT_{\rm e}{:}\ee(\vv)\,\d\xx
	\\&\quad\nonumber=
	-\!\int_\varOmega\!\Big(\frac{[\varphi']^{\bm\xi}(\Fe)\!}{\det\Fe}
	+\frac{[\COUPLING_\Fe']^{\bm\xi}(\Fe,\theta)}{\det\Fe}\Big)\Fetop{:}\ee(\vv)\,\d\xx
	\\&\quad\nonumber
	=
	-\!\int_\varOmega\!\Big(\Big[\frac{\varphi^{\bm\xi}(\Fe)}{\det\Fe}\Big]_{\Fe}'\!\!\Fetop
	+\frac{\varphi^{\bm\xi}(\Fe)}{\det\Fe}\bbI
+\frac{[\COUPLING_\Fe'](\Fe,\theta)}{\det\Fe}\Fetop\Big){:}\ee(\vv)\,\d\xx
	\\&\quad
	=
	-\!\int_\varOmega\!\Big[\frac{\varphi^{\bm\xi}(\Fe)}{\det\Fe}\Big]_{\Fe}'\!\!
	{:}(\Nabla\vv)\Fe
	+\frac{\varphi^{\bm\xi}(\Fe)}{\det\Fe\!}\,{\rm div}\,\vv
+\frac{[\COUPLING_\Fe']^{\bm\xi}(\Fe,\theta)\Fetop\!\!\!}{\det\Fe}{:}\ee(\vv)\,\d\xx.
	\label{eq:22}\end{align}
The test of the flow rule by $\Lp$ yields the following term arising from the
Mandel stress:
\begin{align}\nonumber
&\int_{\varOmega} J^{-1}{\rm dev}\big(\Fetop{[\varphi'_{\Fe}]^{\bm\xi}(\Fe)}
\big){:}\Lp\,\d\xx
=\int_{\varOmega} J^{-1}\Fetop{[\varphi'_{\Fe}]^{\bm\xi}(\Fe)}\big){:}\Lp\,\d\xx
\\&\qquad=\int_\varOmega\Big(\Fetop\Big[\frac{\varphi^{\bm\xi}(\Fe)}{\det\Fe}\Big]_{\Fe}'\!\!
+\frac{\varphi^{\bm\xi}(\Fe)}{\det\Fe}\bbI\Big){:}\Lp\,\d\xx
=\int_\varOmega\Big[\frac{\varphi^{\bm\xi}(\Fe)}{\det\Fe}\Big]_{\Fe}'\!\!
{:}\Fe\Lp\,\d\xx,
\label{eq:43}\end{align}
where \eqref{eq:44} has been used and the fact that $\Lp{:}\bbI=0$.
When the balance of momentum tested by $\vv$ is added to the flow rule
tested by $\Lp$, the terms in \eqref{eq:22} and \eqref{eq:43} sum up and
can be manipulated as follows:
\begin{align}\nonumber
	&\int_\varOmega\!\Big[\frac{\varphi^{\bm\xi}(\Fe)}{\det\Fe}\Big]'_{\Fe}
	{:}\big(\hspace*{-.7em}\lineunder{(\Nabla\vv)\Fe-\Fe\Lp}{$=\DT\Fe$ due to \eqref{DTFe}\!\!}\hspace*{-.7em}\big)
	+\frac{\varphi^{\bm\xi}(\Fe)}{\det\Fe\!}\,{\rm div}\,\vv\,\d\xx
	\nonumber\\[-.3em]
	&\nonumber\hspace*{1em}=\int_\varOmega\!\Big[\frac{\varphi^{\bm\xi}(\Fe)}{\det\Fe}\Big]'_{\Fe}
	{:}\Big(\pdt{\Fe}+(\vv{\cdot}\nabla)\Fe\Big)
+\frac{\varphi^{\bm\xi}(\Fe)}{\det\Fe\!}\,{\rm div}\,\vv
       +\frac{\big[\varphi'_{\XX}\big]^{\bm\xi}(\Fe)}{\det\Fe}{\cdot}\Big(\hspace*{-.7em}\lineunder{\pdt{\bm\xi}+(\vv{\cdot}\nabla)\bm\xi}{$=0$ due to \eqref{Euler2=hypoplast-xi}\!\!}\hspace*{-.7em}\Big)\,\d\xx
	\\[-.9em]
	&\nonumber\hspace*{1em}
	=\frac{\d}{\d t}\int_\varOmega\frac{\varphi^{\bm\xi}(\Fe)}{\det\Fe}\,\d\xx
	+\!\int_\varOmega\!\nabla\Big(\frac{\varphi^{\bm\xi}(\Fe)}{\det\Fe}\Big){\cdot}\vv+\frac{\varphi^{\bm\xi}(\Fe)}{\det\Fe\!}\,{\rm div}\,\vv\,\d\xx
	\\&\hspace*{1em}
	=\frac{\d}{\d t}\int_\varOmega\frac{\varphi^{\bm\xi}(\Fe)}{\det\Fe}\,\d\xx
	+\!\int_\varGamma\frac{\varphi^{\bm\xi}(\Fe)}{\det\Fe}(\hspace*{-.7em}\lineunder{\vv{\cdot}\nn}{$=0$}\hspace*{-.7em})\,\d S\,.
\end{align}
For the term containing the hyperstress
$\mathfrak{H}=\nu_1|\nabla\EE(\vv)|^{p-2}\nabla\EE(\vv)$, we obtain
\begin{align}
\int_{\Omega} {\rm div}^{2} \mathfrak{H}{\cdot}\vv \dx &=\int_{\varGamma} \vv{\cdot}{\rm div} \mathfrak{H}\,\nn\d S-\int_{\Omega} \operatorname{div} \mathfrak{H}{:}\nabla \vv \dx \nonumber\\
&=\int_{\Omega} \mathfrak{H} \vdots \nabla^{2} \vv \dx+\int_{\varGamma} \mathfrak{H}\Vdots (\nabla \vv{\otimes}\nn)-\vv{\cdot}\operatorname{div} \mathfrak{H} \nn\d S \nonumber \\
&=\int_{\Omega} \mathfrak{H} \vdots \nabla^{2} \vv \dx+\int_{\varGamma} \mathfrak{H}\Vdots(\partial_{\nn}\vv{\otimes}\nn {\otimes} \nn)+\mathfrak{H}\Vdots(\nabla_{\mathrm{S}} \vv{\otimes}\nn)-\vv{\cdot}\operatorname{div} \mathfrak{H} \nn\d S  \nonumber\\
&=\int_{\Omega} \mathfrak{H} \vdots \nabla^{2}\vv\dx+\int_{\varGamma} \mathfrak{H}\Vdots(\partial_\nn\vv{\otimes}\nn {\otimes} \nn)-\left(\operatorname{div}_{\mathrm{S}}(\mathfrak{H}\nn)+\operatorname{div} \mathfrak{H} \nn\right){\cdot}\vv\d S \nonumber\\
&=\int_\Omega \nu_1|\nabla \boldsymbol{e}(\vv)|^{p}\,\d\xx+\int_\varGamma\,\underbrace{\nabla(\ee(\vv)){:}(\nn{\otimes}\nn)}_{\displaystyle =0}{\cdot}\partial_\mm\vv{\rm d}S=0.\label{hyper}
\end{align}
In the last equality in \eqref{hyper}, the boundary integrals vanish because
of the first and the second of the boundary conditions
$\eqref{Euler-hypoplast-xi-BC}$. Again using that $\Lp$ is trace-free, we also
compute the contribution from the hardening energy $\phi(\Fp)$ from the flow
rule as follows
\begin{align}
&\int_\varOmega J^{-1}{\rm dev}\big(\Fp^\top[\phi'_{\Fp}]^{\bm\xi}(\Fp)\big){:}\Lp\,\d\xx=\int_\varOmega J^{-1}\Fp^\top[\phi'_{\Fp}]^{\bm\xi}(\Fp){:}\Lp\,\d\xx
\nonumber
\\
&\qquad\qquad=\int_\varOmega J^{-1}[\phi'_{\Fp}]^{\bm\xi}(\Fp){:}\DT\Fp\,\d\xx
\!\!\stackrel{\eqref{Euler2=hypoplast-xi}}=\!\!\!\int_\varOmega J^{-1}[\phi'_{\Fp}]^{\bm\xi}(\Fp){:}\DT\Fp+J^{-1}[\phi'_{\XX}]^{\bm\xi}(\Fp){\cdot}\DT{\bm\xi}\,\d\xx\nonumber
\\
&\qquad\qquad=\int_\varOmega\bigg(J^{-1}[\phi'_{\Fp}]^{\bm\xi}(\Fp){:}\pdt\Fp+J^{-1}[\phi'_{\XX}]^{\bm\xi}(\Fp){\cdot}\pdt{\bm\xi}\nonumber
\\[-.5em]&\hspace{11em}
+J^{-1}[\phi'_{\Fp}]^{\bm\xi}(\Fp){:}(\vv{\cdot}\nabla)\Fp+J^{-1}[\phi'_{\XX}]^{\bm\xi}(\Fp){\cdot}(\vv{\cdot}\nabla){\bm\xi}\bigg)\,\d\xx\nonumber
\\[-.5em]
&\qquad\qquad=\int_\varOmega J^{-1}\pdt\phi^{\bm\xi}(\Fp)+J^{-1}(\vv{\cdot}\nabla)\phi^{\bm\xi}(\Fp)\,\d\xx\nonumber\\
&\qquad\qquad=\int_\varOmega \pdt{}\big({J^{-1}\phi^{\bm\xi}(\Fp)}\big)-\pdt{J^{-1}}\phi^{\bm\xi}(\Fp)+J^{-1}(\vv{\cdot}\nabla)\phi^{\bm\xi}(\Fp)\,\d\xx
\nonumber\\
&\qquad\qquad=\frac{{\rm d}}{{\rm d}t}\int_\varOmega {J^{-1}\phi^{\bm\xi}(\Fp)}\,\d\xx-\int_\varOmega\pdt{J^{-1}\!\!}\,\phi^{\bm\xi}(\Fp)-J^{-1}(\vv{\cdot}\nabla)\phi^{\bm\xi}(\Fp)\,\d\xx
\nonumber\\
&\qquad\qquad\!\!\stackrel{\eqref{DT-det-1+}}=\!\!\frac{\rm d}{{\rm d}t}\int_\varOmega {J^{-1}\phi^{\bm\xi}(\Fp)}\,\d\xx+\int_\varOmega{\rm div}(J^{-1}\vv)\phi^{\bm\xi}(\Fp)+J^{-1}(\vv{\cdot}\nabla)\phi^{\bm\xi}(\Fp)\,\d\xx\nonumber\\
&\qquad\qquad
=\frac{\rm d}{{\rm d}t}\int_\varOmega {J^{-1}\phi^{\bm\xi}(\Fp)}\,\d\xx+\int_\varOmega {\rm div}(J^{-1}\phi^{\bm\xi}(\Fp)\vv))\,\d\xx\nonumber
\\&\qquad\qquad
=\frac{\rm d}{{\rm d}t}\int_\varOmega {J^{-1}\phi^{\bm\xi}(\Fp)}\,\d\xx+\int_\varGamma J^{-1}\phi^{\bm\xi}(\Fp)\underbrace{\vv{\cdot}\nn}_{\displaystyle=0}\,\d\xx.
\nonumber\end{align}
Finally, the test of the inertial term yields
\begin{align}
\nonumber\int_\varOmega \rhoRxi{\rm det}(\bm\xi)\DT\vv{\cdot}\vv\,\d\xx&=	\int_\varOmega\rhoRxi{\rm det}(\nabla\bm\xi)\vv{\cdot}\Big(\frac{\partial\vv}{\partial t}+\vv{\cdot}\nabla\vv\Big)\,\d\xx\\
&=\int_\varOmega\rhoRxi{\rm det}(\nabla\bm\xi)\vv{\cdot}\frac{\partial\vv}{\partial t}+\rhoRxi{\rm det}(\nabla\bm\xi)\vv{\cdot}\nabla\frac{|\vv|^2}{2}\,\d\xx\nonumber\\
&=\int_\varOmega\rhoRxi{\rm det}(\nabla\bm\xi)\vv{\cdot}\frac{\partial\vv}{\partial t}-{\rm div}(\rhoRxi{\rm det}(\nabla\bm\xi)\vv)\frac{|\vv|^2}2\,\d\xx\nonumber\\
&=\int_\varOmega\rhoRxi{\rm det}(\nabla\bm\xi)\vv{\cdot}\frac{\partial\vv}{\partial t}+\frac{\partial}{\partial t}(\rhoRxi{\rm det}(\nabla\bm\xi))\frac{|\vv|^2}{2}\,\d\xx\nonumber\\
&=\int_\varOmega\pdt{}\big(\rhoRxi{\rm det}(\nabla\bm\xi)\frac{|\vv|^2}2\big)\,\d\xx\nonumber
=\frac{{\rm d}}{{\rm d}t}\int_\varOmega\rhoRxi{\rm det}(\nabla\bm\xi)\frac{|\vv|^2}2\,\d\xx.
\end{align}

We thus obtain the {\it energy-dissipation balance}:%
\begin{align}
\frac{\rm d}{{\rm d}t}&\int_\varOmega \hspace*{-.7em}\linesunder{J^{-1}\rhoRxi\frac{|\vv|^2}2}{kinetic}{energy}\hspace*{-.7em}+
\hspace*{-.7em}\linesunder{J^{-1}\varphi^{\bm \xi}(\Fe)}{strain}{energy}\hspace*{-.7em}
+\hspace*{-.7em}\linesunder{J^{-1}\phi^{\bm\xi}(\Fp)}{hardening}{energy} \,\d\xx
\nonumber
\\\nonumber
&+\int_\varOmega%
\hspace*{-.7em}\linesunder{\nu_1|\nabla\ee(\vv)|^p+\nu_0|\ee(\vv)|^p}{dissipation rate due to}{Stokes (hyper)viscosity}\hspace*{-.7em}+\hspace*{-.7em}\linesunder{J\partial_{\Lp}\!\zeta(\theta;\Lp){:}\Lp\!+\nu_2|\nabla\Lp|^q}{dissipation rate due}{to plastification}\hspace*{-.5em}%
\,\d\xx
\\
&\hspace{9em}=\int_\varOmega\hspace*{-.7em}\linesunder{J^{-1}\varrho_\text{\sc r}^{\bm\xi}\,\bm g{\cdot}\vv}{power of}{gravity}\hspace*{-.7em}
-\hspace*{-.7em}\linesunder{J^{-1}[\gamma'_{\Fe}]^{\bm\xi}(\Fe,\theta){:}(\nabla\vv\Fe{-}\Fe\Lp)}{power of adiabatic}{effects}\hspace*{-.7em}\,\d\xx.
\label{energetics1}\end{align}
When \eqref{energetics1} is added to the heat equation
\eqref{Euler5=hypoplast-xi}, the dissipative terms and the adiabatic term
cancel, and using the identity
\begin{align}
\int_\varOmega\DT w+w\,{\rm div}\,\vv\,\d\xx&=\int_\varOmega\pdt w+\vv{\cdot}\nabla w	+w\,{\rm div}\,\vv\,\d\xx\nonumber\\
&=\frac{\rm d}{{\rm d}t}\int_\varOmega w\,\d\xx+\int_\varOmega{\rm div}(w\vv)\,\d\xx\nonumber
=\frac{\rm d}{{\rm d}t}\int_\varOmega w\,\d\xx+\int_\varGamma\underbrace{w\vv{\cdot}\nn}_{\displaystyle =0}\,\d S\,,
\end{align}
we obtain the {\it total-energy balance}:
\begin{align}
\frac{\rm d}{{\rm d}t}\int_\varOmega\hspace*{-.7em}\linesunder{J^{-1}\rhoRxi\frac{|\vv|^2}2}{kinetic}{energy}\hspace*{-.7em}+
\hspace*{-.7em}\linesunder{J^{-1}\varphi^{\bm \xi}(\Fe)}{stored}{energy}\hspace*{-.7em}+
\hspace*{-.7em}\linesunder{\omega^{\bm\xi}(\Fe,\theta)}{heat part of}{internal energy}\hspace*{-.7em}\,\d\xx=\int_\varGamma \hspace*{-.7em}\linesunder{h(\theta)}{heat}{influx}\hspace*{-.7em}\d S+\int_\varOmega \hspace*{-.7em}\linesunder{J^{-1}\varrho_\text{\sc r}^{\bm\xi}\,\bm g{\cdot}\vv}{power of}{gravity}\hspace*{-.7em}\,\d\xx.\label{tot-energy-balance}
\end{align}
We recall that in the above equations $J^{-1}={\rm det}(\nabla\bm\xi)=1/{\rm det}(\Fe)$.

\section{Analysis: existence of weak solutions}\label{sec-anal}
We will use the standard notation concerning the Lebesgue and the Sobolev
spaces, namely $L^p(\varOmega;\R^d)$ for Lebesgue measurable functions
$\varOmega\to\R^d$ whose Euclidean norm is integrable with $p$-power, and
$W^{k,p}(\varOmega;\R^d)$ for functions from $L^p(\varOmega;\R^d)$ whose
all derivative up to the order $k$ have their Euclidean norm integrable with
$p$-power. We also write briefly $H^k=W^{k,2}$. The notation
$p^*$ will denote the exponent from the embedding
$W^{1,p}(\varOmega)\subset L^{p^*}(\varOmega)$, i.e.\ $p^*=dp/(d{-}p)$
for $p<d$ while $p^*\ge1$ arbitrary for $p=d$ or $p^*=+\infty$ for $p>d$.
Moreover, for a Banach space
$X$ and for $I=[0,T]$, we will use the notation $L^p(I;X)$ for the Bochner
space of Bochner measurable functions $I\to X$ whose norm is in $L^p(I)$
while $W^{1,p}(I;X)$ denotes for functions $I\to X$ whose distributional
derivative is in $L^p(I;X)$. Also, $C(\cdot)$ and $C^1(\cdot)$
will denote spaces of continuous and continuously differentiable functions. Moreover, as usual, we will use $C$ for a generic constant which may vary
from estimate to estimate.

\begin{definition}[\sl Weak solutions to  \eqref{Euler-hypoplast-xi}]\label{def:1}
Let $p,q,r>1$. We say that a quintuple $(\vv,\bm\xi,\Lp,\Fp,\theta)$ with $\vv\in L^\infty(I;L^2(\varOmega;\R^d))\cap L^p(I;W^{2,p}(\varOmega;\R^d))$, $\bm\xi\in L^\infty(I;W^{2,r}(\varOmega;\R^d))\cap W^{1,1}(I;L^r(\varOmega;\R^d))$,  $\Lp\in L^q(I;W^{1,q}(\varOmega;\R^{d\times d}))$ with
${\rm div}(\nu_2|\Nabla\Lp|^{q-2}\Nabla\Lp)\in L^2(I{\times}\varOmega;\R^{d\times d})$, 
 $\Fp\in L^\infty(I;W^{1,r}(\varOmega;\R^{d\times d}))\cap W^{1,1}(I;L^r(\varOmega;\R^{d\times d}))$ and $\theta\in L^1(I;W^{1,1}(\varOmega))$ is a weak solution of system \eqref{Euler-hypoplast-xi} with the initial conditions \eqref{ic} and with the boundary conditions \eqref{Euler-hypoplast-xi-BC} if: 
\begin{itemize}
\item [(i)] \eqref{Euler1=hypoplast-xi} holds in the weak sense, specifically,
\begin{align}
&\int_0^T\!\!\int_\varOmega\bigg(\Big(\operatorname{det}(\nabla \bm\xi)\big(\big[\varphi_{\FF_{\mathrm{e}}}^{\prime}\big]^{\bm\xi}(\FF_{\mathrm{e}})+\big[\gamma_{\FF_{\mathrm{e}}}^{\prime}\big]^{\bm\xi}(\FF_{\mathrm{e}},\theta)\big) \FF_{\mathrm{e}}^{\top}+\nu_0|\ee(\vv)|^{p-2} \ee(\vv)
\nonumber\\
&-\det(\nabla\bm\xi)\rhoRxi\vv{\otimes}\vv\Big){:}\ee(\widetilde\vv)
+\nu_1|\nabla\ee(\vv)|^{p-2}\nabla\ee(\vv)\vdots\nabla\ee(\widetilde\vv)\nonumber
-\det(\nabla\bm\xi)\rhoRxi\vv\cdot\pdt{\widetilde\vv}\bigg)\dx\dt\\
&=\int_0^T\!\!\int_\varOmega\det(\nabla\bm\xi){\rhoRxi}\gg\cdot\widetilde\vv+\int_\varOmega\det(\nabla\bm\xi_0)\rhoRxizero\vv_0\cdot\widetilde\vv(0)\dx
\end{align}
for any smooth $\widetilde\vv$ such that $\widetilde\vv=0$ on $\varGamma$ and
$\widetilde\vv(T)=0$, with $\Fe=(\nabla\bm\xi)^{-1}\Fp$; 
\item [(ii)] (\ref{Euler-hypoplast-xi}b--d) hold a.e.\ in $I{\times}\varOmega$,
the first two initial conditions in \eqref{ic} hold a.e.\ in $\varOmega$, and
the boundary conditions \eqref{Euler-hypoplast-xi-BC1} hold a.e.\ on
$\varSigma=I{\times}\varGamma$;
\item [(iii)] the heat equation \eqref{Euler5=hypoplast-xi}, with the boundary condition \eqref{eq:10} and the initial condition for $\theta$ in \eqref{ic}, hold in the weak sense, specifically that
\begin{align}
  \nonumber &\int_0^T\!\!\int_\varOmega\bigg(\omega^{\bm\xi}(\Fe,\theta)\pdt{\widetilde\theta}+\big(\omega^{\bm\xi}(\Fe,\theta)\vv-\kappa^{\bm\xi}(\Fe,\theta)\nabla\theta\big){\cdot}\nabla\widetilde\theta+\Big(\nu_0|\ee(\vv)|^p+\nu_1|\nabla\ee(\vv)|^p\\
  \nonumber  &\quad +\partial_{\Lp}\!\zeta(\theta;\Lp) +\nu_2|\nabla\Lp|^q+\det(\nabla\bm\xi)[\gamma'_{\Fe}]^{\bm\xi}(\Fe,\theta):(\nabla\vv\Fe{-}\Fe\Lp)\Big)\widetilde\theta\bigg)\dx\dt\\
  &\quad+\int_0^T\!\!\int_\varGamma h(\theta)\widetilde\theta\d S\dt+\int_\varOmega\omega(\FF_{\rm e,0},\theta_0)\widetilde\theta(0)\dx=0
\end{align}
for all $\widetilde\theta$ smooth such that $\widetilde\theta(T)=0$, where $\FF_{\rm e,0}=\nabla\bm\xi_0^{-1}\FF^{-1}_{\rm p 0}$.
\end{itemize}
\end{definition}

In writing the weak form of \eqref{Euler1=hypoplast-xi} we have used the
identity
\begin{equation*}
\rho\DT\vv=\rho\pdt\vv+\rho(\vv\cdot\nabla)\vv=\pdt{(\rho\vv)}+{\rm div}(\rho\vv)\vv+\rho(\vv\cdot\nabla)\vv=\pdt{(\rho\vv)}+{\rm div}(\rho\vv\otimes\vv)\,,
\end{equation*}
where $\rho=\rhoRxi{\det}(\nabla\bm\xi)$. Indeed, $\rho$ obeys automatically
the local form of mass conservation, as can be verified from the following
calculation:
\begin{align} 
 \nonumber \frac{\partial}{\partial t}\rho_{ }&=\pdt{}(\rhoRxi{\rm det}(\nabla\bm\xi_{ }))=
  [[\rhoR]_\XX']^{\bm\xi_{ }}{\rm det}(\nabla\bm\xi)\pdt{\bm\xi_{ }}+\rhoRxi{\rm Cof}(\nabla\bm\xi_{ }){:}\pdt{}\nabla\bm\xi_{ }\\
  \nonumber 
  & 
=-%
[[\rhoR]_\XX']^{\bm\xi_{ }}{\rm det}(\nabla\bm\xi_{ })(\vv_{ }
{\cdot}\nabla)\bm\xi-\rhoRxi{\rm Cof}(\nabla\bm\xi_{ }){:}(\nabla\bm\xi)\nabla\vv_{ }-\rhoRxi{\rm Cof}(\nabla\bm\xi)(\vv_{ }{\cdot}\nabla)\nabla\bm\xi\\
  &=-(\vv{\cdot}\nabla)(\rhoRxi{\det}(\nabla\bm\xi))
 -\rhoRxi\det(\nabla\bm\xi){\rm div}\vv_{ }
  =-{\rm div}(\rho_{ }\vv_{ }).\label{mass-conserv}
\end{align}	
Note that to obtain \eqref{mass-conserv} we have used \eqref{dtnabla} and
the formula
${\rm Cof}(\nabla\bm\xi)={\rm det}(\nabla\bm\xi)\nabla\bm\xi^{-\top}$. 

For analytical reasons, specifically the estimate
(see \eqref{Euler-thermodynam3-test2} below) of the convective
term in the heat equation,
we adopt a rather special form of the coupling part of the free energy. Namely,
given $1<\alpha\le2$ %
and $\varsigma>0$, we assume:
\begin{align}\label{ass-split-omega}
\COUPLING(\XX,\Fe,\theta)=\widetilde\COUPLING(\XX,\Fe,\theta)-\varsigma\theta^\alpha\det\Fe
\end{align}
and we let
\begin{equation}\label{split-omega}
	\varpi(\XX,\Fe,\theta)=\frac{\widetilde\COUPLING(\XX,\Fe,\theta)-\theta\widetilde\COUPLING'_\theta(\XX,\Fe,\theta)}{\det\Fe}.
\end{equation}
As a result, we have the representation
\begin{equation}\label{representation-omega}
	\OMEGA(\XX,\Fe,\theta)=\varpi(\XX,\Fe,\theta)+\varsigma(\alpha{-}1)\theta^\alpha
\end{equation}
{with} $\varpi$ {growing} at most linearly with respect
to $\theta$. This means that the actual
heat capacity $\OMEGA_\theta'(\XX,\Fe,\theta)$
contains an $(\XX,\Fe)$-independent contribution with an
$(\alpha{-}1)$-polynomial growth in $\theta$.

We can now summarize the data
qualification. For some $\delta>0$, $\epsilon>0$, $\max(d,1+\alpha)<r<\infty$, $1<\alpha<{2}$, $d<q<\infty$ and $d<p<\infty$, we assume:
\begin{subequations}\label{Euler-ass+}\begin{align}
&\varOmega\ \text{ smooth bounded domain of $\R^d$, }\ d=2,3
\\&\label{Euler-ass-phi+}
\varphi\in C^1({\rm GL}^+(d)),\ \ \forall\Fe\,{\in}\,{\rm GL}^+(d):\ \
\varphi(\Fe)\ge\delta(1{+}\det\Fe)
\\&
\COUPLING\,{\in}\, C^2({\rm GL}^+(d){\times}\R^+),\ \forall(\Fe,\theta)\,{\in}\,{\rm GL}^+(d){\times}\R^+:\ 
\nonumber\\&\hspace{5em}\max\bigg(\Big|\frac{\COUPLING_{\Fe}'(\Fe,\theta)\Fetop}{\det\Fe}\Big|,
\Big|\frac{\Fetop\COUPLING_{\Fe}'(\Fe,\theta)}{\det\Fe}\Big|\bigg)
\le C\Big(1+\frac{\varphi(\Fe)}{\det\Fe}+\theta^\alpha\Big)\,,
\label{Euler-ass-adiab+}
\\[-.0em]&\nonumber
\exists c_{\rm m}^{}
\in C({\rm GL}^+(d))\text{ positive such that}\ \forall
(\XX,\Fe,\theta)\in\Omega\times{\rm GL}^+(d){\times}\R^+:\ \ \ 
\\[-.2em]&\qquad
0\ge\COUPLING_{\theta\theta}''(\XX,\Fe,\theta)\ge -c_{\rm m}^{}(\Fe)/\theta
\ \ \text{ and }\ \
\COUPLING(\XX,\Fe,\theta){-}\theta\COUPLING_\theta'(\XX,\Fe,\theta)\ge
0\,,
\label{Euler-ass-c-Euler-ass-W+}
\\[-.0em]&\nonumber
{\zeta\in C(\R^+{\times}\R_{\rm dev}^{d\times d}):\ \forall\theta\in\R^+\  
{\zeta(\theta;\cdot)\in C^1(\R_{\rm dev}^{d\times d}\setminus\{0\})}\ }
\text{ convex with }\zeta(\theta;0)=0\ \text{ and }
\\&\hspace{2em}
\exists\,c_\zeta>0,\ \zeta_\text{\sc m}\in C^1(\R_{\rm dev}^{d\times d})
 \ \forall(\theta,\Lp)\in\R^+{\times}\R_{\rm dev}^{d\times d}:\ \
\zeta_\text{\sc m}(\Lp)\ge\zeta(\theta;\Lp)\ge c_\zeta|\Lp|^2\,,
\label{Euler-ass-zeta}
\\[-.0em]&
\forall \Lp\in\mathbb R^{d\times d}_{\rm dev},\  t\mapsto\zeta(t\boldsymbol{L}_{\rm p})\text{ is differentiable at }t=1\,,
\\[-.0em]&
{\phi\in C(\varOmega{\times}\R_{\rm dev}^{d\times d})\ \text{ with }\
\phi_{\Fp}'\in C(\varOmega{\times}\R_{\rm dev}^{d\times d};\R_{\rm dev}^{d\times d})
\ \text{ and }\ \phi\ge0\,,}
\label{ass-hard}
\\[-.0em]&\nonumber
\forall K{\subset}\,{\rm GL}^+(d)\text{ compact }\ \exists C_K<\infty
\ \ \forall \Fe\in K\,,\XX\in\Omega:\ \ \ 
|\varpi_\theta'(\XX,\Fe,\theta)|\le  C_K\,,\ \
\\&\hspace{16.7em}
|\varpi_{(\XX,\Fe)}'(\XX,\Fe,\theta)|\le  C_K(1{+}\theta^{\alpha})\,,\nonumber
\\&\hspace{16.7em}
|\varpi_{(\XX\theta,\Fe\theta)}''(\XX,\Fe,\theta)|\le  C_K(1+\theta^{\alpha-1})\,,
\label{Euler-ass-primitive-c+}
\\&\kappa\in C(\varOmega{\times}{\rm GL}^+(d){\times}\R^+)\ \text{ bounded},\ \ \
\inf\kappa(\varOmega{\times}{\rm GL}^+(d){\times}\R^+)>0\,,\label{Euler-ass-k}
\\&\nonumber
h:I{\times}\varGamma{\times}\R^+\to\R\ \text{ Carath\'eodory function},\ \ \
\theta\,h(t,\xx,\theta)\le C(1{+}\theta)\ \ \text{ and}
\\[-.1em]&\hspace{11em}
h(t,\xx,\theta)\le h_{\max}(t,\xx)\ \ \text{ for some }\ h_{\max}\in L^1(I{\times}\varGamma)\,,
\label{Euler-ass-h+}
\\&
\GRAVITY\in L^1(I;L^\infty(\varOmega;\R^d))\,,
\label{Euler-ass-f-g}\\
&\vv_0\in L^2(\varOmega;\R^d)\label{Euler-ass-v0}
\\&\bm\xi_0\in W^{2,r}(\varOmega;\R^{d})\,,\ \ r>d{\ \text{ and }\ r\ge\alpha{+1}}\,,\ \ \ \text{ with }\ \ \
\min_{\varOmega}^{}\det(\nabla\bm\xi_0)>0\,,
\label{Euler-ass-Fe0++}
\\&\bm\FF_{\rm p,0}\in W^{1,r}(\varOmega;\R^{d})\,,\ \ r>d\,,\ \ \
\text{ with }\ \ \ \det{\FF_{\rm p,0}}=1\,,
\label{Euler-ass-Fp0}
\\&\rhoR\in C(\barvarOmega)\,,\quad\text{with }\ \min_\varOmega^{}{\rhoR}>0,%
\label{Euler-ass-rhoR}
\\&\theta_0\in L^1(\varOmega),\ \ \ \theta_0\ge0\ \text{ a.e.\ on }
\end{align}\end{subequations}
Note that \eq{ass-hard} admits a model without any hardening.
  
Since $\det(\nabla\bm\xi_0)$ is positive almost everywhere, and since $\FF_{\rm p,0}$ has unit determinant, both matrices are invertible. Using the identity $\mathbb A^{-1}={\rm Cof}(\mathbb A)^{\top}\!/\!\det(\mathbb A)$ for every invertible matrix $\mathbb A$, we obtain from \eqref{Euler-ass-Fe0++} and \eqref{Euler-ass-Fp0} that
\begin{equation}
(\nabla\bm\xi_0^{-1})\FF_{\rm p\, 0}^{-1}\in W^{1,r}(\varOmega;\R^{d\times d}).\label{Euler-ass-Fe0}
\end{equation}
                        
\begin{theorem}
Let \eqref{ass-split-omega} and \eqref{Euler-ass+} hold. Then:\\
\Item{(i)}{there exists a weak solution of \eqref{Euler-hypoplast-xi} with the boundary conditions \eqref{Euler-hypoplast-xi-BC} and the initial conditions \eqref{ic} in the sense of Definition \ref{def:1}.
Such solution satisfies the additional regularity $\pdt{}\bm\xi\in L^p(I;L^\infty(\varOmega;\R^d)$,  $\pdt{}\bm F_{\rm e}\in L^{\min(p,q)}(I;L^r(\varOmega;\R^{d\times d}))$,   $\theta\in L^\infty(I;L^\alpha(\varOmega))\cap L^\mu(I;W^{1,\mu}(\varOmega))$ with $1\le \mu<(d{+}2{\alpha})/(d{+}{\alpha})$.}
\Item{(ii)}{Moreover, the dissipation-energy balance \eqref{energetics1} and
the total-energy balance \eqref{tot-energy-balance}
integrated over a time interval $[0,t]$ with $t\in I$ hold.}
\end{theorem}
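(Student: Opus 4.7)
The plan is to combine a regularization of the singular elastic energy with a semi-Galerkin scheme in the velocity variable $\vv$, treating the other variables $(\bm\xi,\Fp,\Lp,\theta)$ by continuous equations solved pointwise-in-time (for the transport variables) or via a separate Galerkin / Rothe step (for $\Lp$ and $\theta$). More precisely, for a regularization parameter $\epsilon>0$ I would replace $\varphi$ by a convex-in-$\det$ truncation $\varphi_\epsilon$ that coincides with $\varphi$ on $\{\det\Fe\ge\epsilon\}$, and truncate $[\gamma'_{\Fe}]^{\bm\xi}$, $\kappa$ and $h$ at height $1/\epsilon$ in $\theta$, so that the ODE/PDE system for the Galerkin approximation has globally Lipschitz coefficients at each level. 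A finite-dimensional subspace $V_k\subset W^{2,p}(\varOmega;\R^d)\cap W^{1,p}_0$ is then chosen for $\vv$; given a Galerkin velocity $\vv_k$ we solve the transport equations \eqref{Euler2=hypoplast-xi} and \eqref{Euler3=hypoplast-xi} for $\bm\xi$ and $\Fp$ along the flow of $\vv_k$ (by the method of characteristics, which works because $\vv_k\in W^{2,p}\hookrightarrow C^1$ since $p>d$), solve the quasilinear elliptic/parabolic inclusion \eqref{Euler4=hypoplast-xi} for $\Lp$ by a monotone operator argument, and solve the heat equation \eqref{Euler5=hypoplast-xi} by a second Galerkin-in-$\theta$ step. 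Existence of a solution to this decoupled fixed-point problem at each level $(k,\epsilon)$ follows from Schauder's theorem applied on a short time interval and then extended globally using the a priori estimates below.

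The core a priori bounds come from the energy balance \eqref{energetics1} tested against $(\vv,\Lp)$: this yields $\vv\in L^\infty(I;L^2)\cap L^p(I;W^{2,p})$, $\Lp\in L^q(I;W^{1,q})$, and $L^\infty(I;L^1)$ bounds on $\varphi^{\bm\xi}(\Fe)/\!\det\Fe$ and $\phi^{\bm\xi}(\Fp)/\!\det\Fe$. Because $p>d$, the embedding $W^{2,p}\hookrightarrow W^{1,\infty}$ gives $\Nabla\vv\in L^p(I;L^\infty)$, which together with \eqref{Euler2=hypoplast-xi}--\eqref{Euler3=hypoplast-xi} controls $\bm\xi$ and $\Fp$ in $L^\infty(I;W^{1,r})$ with $\pdt{}\bm\xi,\pdt{}\Fp\in L^p(I;L^r)$ by Gronwall on the characteristics; the coercivity assumption \eqref{Euler-ass-phi+} together with $\det\Fe=\det\Fp\det(\nabla\bm\xi)^{-1}$ gives a uniform lower bound on $\det\Fe$ away from $0$ provided it holds initially. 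Then the total-energy balance \eqref{tot-energy-balance}, combined with the assumed splitting \eqref{ass-split-omega}--\eqref{representation-omega}, yields $\theta\in L^\infty(I;L^\alpha(\varOmega))$; a Boccardo--Gallou\"et truncation argument applied to the heat equation upgrades this to $\theta\in L^\mu(I;W^{1,\mu})$ for any $\mu<(d{+}2\alpha)/(d{+}\alpha)$. Non-negativity of $\theta$ is inherited from testing the heat equation by $\theta^{-}$ and using $\zeta\ge0$, the structural assumption \eqref{Euler-ass-c-Euler-ass-W+}, and the compatibility $\theta\,h\le C(1+\theta)$.

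With these uniform estimates in hand I pass first to the limit $k\to\infty$ and then $\epsilon\to0$ in that order. The compactness ingredients are: Aubin--Lions to obtain strong convergence of $\vv_k$ in $L^p(I;C^{1,\beta})$ using the dual estimate $\pdt{}(\rho\vv_k)$ (extracted by duality from the Galerkin momentum equation), strong convergence of $\bm\xi_k,\Fp_k$ in $C(I;C(\barvarOmega))$ from the transport equations and the uniform $W^{1,r}$--$W^{1,p}$ bounds, hence strong convergence of $\Fe_k=(\Fp_k\nabla\bm\xi_k)^{-1}$ by continuity of matrix inversion on compact subsets of $\mathrm{GL}^+(d)$. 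Weak convergence of the $p$-Laplacian type stresses $\nu_0|\ee(\vv)|^{p-2}\ee(\vv)$ and $\nu_1|\Nabla\ee(\vv)|^{p-2}\Nabla\ee(\vv)$ and of $\nu_2|\Nabla\Lp|^{q-2}\Nabla\Lp$ follows from the monotonicity of these nonlinear maps combined with the limsup inequality obtained from energy conservation (Minty--Browder trick). For the heat equation, strong convergence of $\theta$ in $L^1(I\times\varOmega)$ is obtained from the Boccardo--Gallou\"et bound on $\Nabla\theta$ together with an equi-integrability argument for the right-hand side, which is needed to pass to the limit in the nonlinear, only $L^1$-integrable dissipation source terms; once this is done, the passage to the limit in the adiabatic term $\det(\Nabla\bm\xi)[\gamma'_{\Fe}]^{\bm\xi}(\Fe,\theta){:}(\Nabla\vv\Fe-\Fe\Lp)$ is justified by the growth hypothesis \eqref{Euler-ass-adiab+}.

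The main technical obstacle, and the point requiring the greatest care, is the passage to the limit in the heat equation, because its right-hand side contains dissipation densities that are only bounded in $L^1(I\times\varOmega)$. This forces the entire design: the choice $\alpha\le2$, the splitting \eqref{ass-split-omega}, the structural bound \eqref{Euler-ass-adiab+}, and the particular Boccardo--Gallou\"et exponent $\mu<(d{+}2\alpha)/(d{+}\alpha)$ are precisely what is needed to obtain strong $L^1$-compactness of $\theta$ and equi-integrability of the adiabatic and convective terms $\omega^{\bm\xi}(\Fe,\theta)\vv$. A secondary obstacle is maintaining $\det\Fe\ge\epsilon_0>0$ uniformly in the Galerkin parameters, which relies crucially on the lower bound in \eqref{Euler-ass-Fe0++} combined with the energy coercivity \eqref{Euler-ass-phi+}; once this is secured, the nonlinearities in $\Fe$ are locally Lipschitz and the compactness of $\Fe$ follows. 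Finally, the assertion (ii) about the integrated energy balances is obtained by testing the limit momentum equation and limit flow rule by $(\vv,\Lp)$ (now admissible thanks to the regularity $\vv\in L^p(I;W^{2,p})$ and $\Lp\in L^q(I;W^{1,q})$) and using the chain rules established in Subsection~\ref{sec-energetics}.
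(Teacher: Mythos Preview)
Your overall architecture---regularize, semi-Galerkin in $\vv$ with transport equations kept continuous, energy estimates, Boccardo--Gallou\"et for the heat equation, two-stage limit---matches the paper's. However, two steps in your bootstrap are incorrect as stated.

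First, the order of the two energy balances is reversed. You propose to extract the mechanical bounds from the dissipation balance \eqref{energetics1} first and only afterwards use the total-energy balance \eqref{tot-energy-balance} to bound $\theta$. At the $\epsilon$-regularized level this is harmless because the truncated adiabatic stress is bounded, but when you seek $\epsilon$-independent estimates for the limit $\epsilon\to0$, the adiabatic power term on the right of \eqref{energetics1} scales like $\theta^\alpha$ (see \eqref{Euler-ass-adiab+}) and cannot be absorbed without an a~priori bound on $\|\theta\|_{L^\infty(I;L^\alpha)}$. The paper therefore uses \eqref{tot-energy-balance} \emph{first}: the inertial term controls the gravity power via \eqref{eq:70}, yielding $\theta\in L^\infty(I;L^\alpha)$ and $\varphi^{\bm\xi}(\Fe)/\det\Fe\in L^\infty(I;L^1)$, and only then can \eqref{energetics1} be closed to give the dissipation bounds \eqref{bounds000}. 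This is precisely the content of Remark~\ref{rem-inert}.

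Second, the uniform lower bound on $\det\Fe$ does \emph{not} come from the coercivity \eqref{Euler-ass-phi+}. That assumption, combined with $\varphi/\det\Fe\in L^\infty(I;L^1)$, yields only $1/\det\Fe\in L^\infty(I;L^1)$, which is far from a pointwise bound. The correct mechanism is purely kinematic: once $\vv\in L^p(I;W^{2,p})$ with $p>d$, the transport equations \eqref{DT-det} and \eqref{DT-det-1} for $\det(\nabla\bm\xi)$ and $1/\det(\nabla\bm\xi)$ (linear along characteristics with coefficient ${\rm div}\,\vv\in L^p(I;L^\infty)$) give, by Gronwall, that both are bounded in $L^\infty(I;W^{1,r})\hookrightarrow L^\infty(I{\times}\varOmega)$; since $\det\Fp=1$ this is exactly $\det\Fe$ bounded above and below. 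Relatedly, you write $\bm\xi\in L^\infty(I;W^{1,r})$, but the definition and the argument require $\bm\xi\in L^\infty(I;W^{2,r})$ so that $\nabla\bm\xi\in W^{1,r}$ and hence $\Fe=(\Fp\nabla\bm\xi)^{-1}\in W^{1,r}$; this second-order regularity for transport equations needs $\nabla^2\vv\in L^p(I;L^p)$, which is where the hyperviscosity is used.

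A smaller point: testing the heat equation by $\theta^-$ is not legal at the Galerkin level (it is not in the finite-dimensional subspace $Z_k$), so non-negativity of temperature must be established \emph{after} the limit $k\to\infty$, on the continuous regularized system; the paper does this in its Step~6.
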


\begin{proof}
For clarity of exposition, we divide the proof in eight steps.

\medskip

\noindent \emph{Step 1: Formal a-priori estimates.}
As a preliminary step, we are going show that every solution  must satisfy
\begin{equation}\label{cutoff}
{\rm ess}\,{\rm sup}_{I\times\Omega}^{}|\Fe|\le\lambda\qquad\text{and}\qquad
{\rm ess}\,{\rm inf}_{I\times\Omega}^{}\det\Fe\ge \frac 1 \lambda,	
\end{equation}
for some $\lambda\le 1$. Relying on this fact, we will in Step 2 truncate the constitutive equations for the stress in such a way that the truncation is inactive when \eqref{cutoff} are satisfied.

First, we use the total energy balance \eqref{tot-energy-balance} which does
not see any adiabatic and dissipative-heat terms which are problematic as far
as estimation concerns. At this point we assume that $\theta \geq 0$ and that
$J^{-1}={\rm det}(\nabla\bm\xi)>0$. It will be later proved that such
inequalities are satisfied, at least for some solutions. Note that we have
also $\omega^{\bm\xi}(\Fe,\theta) \geq 0$ and thus we are only to estimate the
right-hand side in \eqref{tot-energy-balance}. 

We perform an estimate of the power of the gravity force $\varrho\GRAVITY$ when tested by the velocity $\vv$, using the H\"older/Young inequality as follows:
\begin{align}
  \int_{\Omega} \rhoRxi J^{-1} \GRAVITY{\cdot}\vv\,\d\xx&=\int_{\Omega} \sqrt{{\rhoRxi}{J^{-1}}} \sqrt{\rhoRxi J^{-1}}\vv{\cdot}\GRAVITY\,\d\xx \nonumber
\\&
  \le\big\|\sqrt{\rhoRxi J^{-1}}\big\|_{L^2(\Omega)}\big\|\sqrt{\rhoRxi J^{-1}}\vv\big\|_{L^2(\Omega ; \R^d)}\|\GRAVITY\|_{L^{\infty}\left(\Omega ; \R^d\right)}\nonumber
\\&
\le \frac{1}{2}\bigg(\big\|\sqrt{{\rhoRxi}{J^{-1}}}\big\|_{L^2(\Omega)}^2+\big\|\sqrt{\rhoRxi J^{-1}} \vv\big\|_{L^2(\Omega ; \R^d)}^2\bigg)\|\GRAVITY\|_{L^{\infty}\left(\Omega ; \R^d\right)}\nonumber\\ & =\|\GRAVITY\|_{L^{\infty}(\Omega ; \R^d)} \int_{\Omega} \frac{\rhoRxi J^{-1}}{2}+\frac{\rhoRxi J^{-1}}{2}|\vv|^2\,\,\d\xx\nonumber \\
   &\le \|\GRAVITY\|_{L^{\infty}(\Omega;\R^d)}
   \bigg(\frac{\max _{\Omega} \rhoR}{2 \delta} \int_{\Omega}J^{-1}{\varphi(\Fe)}\,\d\xx+\int_{\Omega} \frac{\rhoRxi J^{-1}}{2}|\vv|^2\,\,\d\xx\bigg).
\label{eq:70}\end{align}
 By Assumption \eqref{Euler-ass-f-g}, $t\mapsto\|\boldsymbol{g}(t)\|_{L^\infty(\varOmega)}$ is integrable, and hence using Gronwall's inequality we obtain the bounds
\begin{subequations}
  \begin{align}
    &\Big\|\sqrt{{\rhoRxi}J^{-1}}\vv\Big\|_{L^\infty(I;L^2(\varOmega;\R^d))}\!\!\!\!=\Big\|\sqrt{{\rhoRxi}{\rm det}(\nabla\bm\xi)}\vv\Big\|_{L^\infty(I;L^2(\varOmega;\R^d))}\!\!\!\!=\Big\|\sqrt{\frac{\rhoRxi}{\det\Fe}}\vv\Big\|_{L^\infty(I;L^2(\varOmega;\R^d))}\le C,\label{eq:1}\\
    &\Big\|{J^{-1}\varphi^{\bm\xi}(\Fe)}\Big\|_{L^\infty(I;L^1(\varOmega))}
    \!\!\!\!=\Big\|{{\rm det}(\nabla\bm\xi)\varphi^{\bm\xi}(\Fe)}\Big\|_{L^\infty(I;L^1(\varOmega))}
    \!\!\!\!=\Big\|\frac{\varphi^{\bm\xi}(\Fe)}{\det\Fe}\Big\|_{L^\infty(I;L^1(\varOmega))}\le C.\label{eq:3}
  \end{align}
  \end{subequations}
  Here we have used the fact that since $\Fe=(\nabla\bm\xi)^{-1}\Fp^{-1}$, and since $\det(\Fp)=1$, we have $J^{-1}=\det(\nabla\bm\xi)=\det\Fe^{\!-1}$.
  
When recalling that we consider now only solutions with $\theta\ge 0$ and realizing that $\omega(\XX,\Fe,\theta)\ge \min(1,\varsigma){\rm sign}(\theta)|\theta|^\alpha$,
 we obtain the estimate:
 \begin{equation}\label{eq:5}
   \|\theta\|_{L^\infty(I;L^\alpha(\varOmega))}\le C.
 \end{equation}
  We now derive the energy-dissipation balance \eqref{energetics1}. The issue is now the estimation of the adiabatic term, which splits into two contributions as follows:
\begin{align}\label{adiab1}
\int_\varOmega\!\frac{[\gamma'_{\Fe}]^{\bm\xi}(\Fe,\theta)}{\det\Fe}{:}(\nabla\vv\Fe{-}\Fe\Lp)\,\d\xx&=\int_\varOmega \frac{[\gamma'_{\Fe}]^{\bm\xi}(\Fe,\theta)\Fe^{\!\!\top}\!\!\!}{\det\Fe}{:}\nabla\vv%
+%
\frac{\Fe^{\!\!\top}[\gamma'_{\Fe}]^{\bm\xi}(\Fe,\theta)\!}{\det\Fe}{:}\Lp\,\d\xx.
\end{align}
We observe that by frame indifference, $[\gamma'_{\Fe}]^{\bm\xi}(\Fe,\theta)\Fe^\top$ is a symmetric tensor, this in the first %
term on the right-hand side of \eqref{adiab1} we can replace $\nabla\vv$ with $\ee(\vv)$ and, by making use of assumption \eqref{Euler-ass-adiab+}, perform the estimate%
\begin{align}
	&\int_\varOmega \Big|\frac{[\gamma'_{\Fe}]^{\bm\xi}(\Fe,\theta)\Fetop\!\!}{\det\Fe}{:}\ee(\vv)\Big|\,\d\xx\le \bigg\|\frac{\big[\gamma'_{\Fe}\big]^{\bm\xi}(\Fe,\theta)\Fetop }{\det\Fe}\bigg\|_{L^{1}(\varOmega;\R^{d\times d})}\|\ee(\vv)\|_{L^\infty(\varOmega;\R^{d\times d})}\nonumber\\
&\qquad\qquad\qquad\le
  {C} \Big\|1+\frac{\varphi^{\bm\xi}(\Fe)}{\det\Fe}+\theta^\alpha\Big\|_{L^{1}(\varOmega;\R^{d\times d})}\|\ee(\vv)\|_{L^\infty(\varOmega;\R^{d\times d\times d})},\nonumber\\
&\qquad\qquad\qquad\le \frac {C^{p'}K_1^{p'}} {\delta^{1/(p-1)}}\Big\|1+\frac{\varphi^{\bm\xi}(\Fe)}{\det\Fe}+\theta^\alpha\Big\|^{p'}_{L^{1}(\varOmega;\R^{d\times d})}\!\!+\delta \|\nabla\ee(\vv)\|^p_{L^p(\varOmega;\R^{d\times d\times d})},
\label{est-rhs2}
\end{align} 
where we have used the inequality
$\|\nabla \vv\|_{L^{\infty}\left(\Omega ; \R^{d \times d}\right)}^p \leq K_1\left\|\nabla\ee(\vv)\right\|_{L^p(\varOmega ; \R^{d \times d \times d})}^p$,
which holds because $p>d$. The second term on the right-hand side of \eqref{adiab1} can be estimated in a similar fashion
\begin{align}
	&\int_\varOmega\Big|\frac{\Fetop[\gamma'_{\Fe}]^{\bm\xi}(\Fe,\theta)}{\det\Fe}{:}\Lp\Big|\,\d\xx\le \bigg\|\frac{\Fetop\big[\gamma'_{\Fe}\big]^{\bm\xi}(\Fe,\theta)}{\det\Fe}\bigg\|_{L^{1}(\varOmega;\R^{d\times d})}\|\Lp\|_{L^\infty(\varOmega;\R^{d\times d})}\nonumber\\
	&\stackrel{\eqref{Euler-ass-adiab+}}\le C
	\Big\|1+\frac{\varphi^{\bm\xi}(\Fe)}{\det\Fe}+\theta^\alpha\Big\|_{L^1(\varOmega;\R^{d\times d})}\|\Lp\|_{L^\infty(\varOmega;\R^{d\times d})}\nonumber
	\\
	&\ \le \frac{C^{2}K_2}{\delta}\Big\|1+\frac{\varphi^{\bm\xi}(\Fe)}{\det\Fe}+\theta^\alpha\Big\|_{L^1(\varOmega;\R^{d\times d})}^{2}\!\!+\delta\|\Lp\|^2_{L^{2}(\varOmega;\R^{d\times d})}\!+\delta\|\nabla\Lp\|^q_{L^q(\varOmega;\R^{d\times d\times d})},
        \label{eq:4}
\end{align}
when using
$\|\Lp\|^2_{L^\infty(\varOmega;\R^{d\times d})}\le K_2(\|\Lp\|^2_{L^2(\varOmega;\R^{d\times d})}+\|\nabla\Lp\|^2_{L^q(\varOmega;\R^{d\times d\times d})})$, thanks to the assumption $q>d\ge 2$. Then for $\delta$ sufficient small we can absorb on the left-hand side the last term of \eqref{est-rhs2} and the last two terms in \eqref{eq:4}. The remaining terms on the right-hand side of \eq{eq:4} have already been estimated through \eqref{eq:3} and \eqref{eq:5}. Thus, we can use the balance \eqref{energetics1} to obtain the estimates
of dissipation rates:
\begin{subequations}\label{bounds000}
\begin{align}
&&&\|\ee(\vv)\|_{L^p(I\times\varOmega;\mathbb R^{d\times d})}^{}\le C,&&\|\nabla\ee(\vv)\|_{L^p(I\times\varOmega;\mathbb R^{d\times d\times d})}^{}\le C,&&&&\label{bounds0aa}\\
&&&\|\Lp\|_{L^2(I\times\varOmega;\mathbb R^{d\times d})}^{}\le C,\ \ \ \text{ and }\!\!\!
&&\|\nabla\Lp\|_{L^q(I\times\varOmega;\mathbb R^{d\times d\times d})}^{}\le C.
\label{bounds0bb}\end{align}
\end{subequations}
In particular, from \eqref{bounds0aa} we obtain
\begin{align}\label{bound0c+}
\|\vv\|_{L^p(I;W^{2,p}(\varOmega;\R^d))}^{}\le C.	
\end{align}
Note that indeed $\nabla\nabla\vv=\nabla\ee(\vv)+\nabla\WW(\vv)$ where $\WW(\vv)$ is the skew-symmetric part of $\nabla\vv$. Let $\ww(\vv)$ be the axial vector of $\WW(\vv)$. Then it is well known that $\nabla\ww(\vv)={\rm curl}\ee(\vv)$. This implies that $\nabla\ww(\vv)$, and hence $\nabla\WW(\vv)$, is controlled by $\nabla\ee(\vv)$. Thus, in order to control the full second gradient of $\vv$ it suffices to control the gradient of $\ee(\vv)$.

Having the regularity of the velocity field $\vv$, we can exploit the assumed regularity
(\ref{Euler-ass+}j,k) of the initial conditions for $\bm\xi$ and $\Fp$, along with the
results from \cite{Roub22QHLS} and \cite{Roub22TVSE} concerning existence, regularity,
and continuous dependence on the data of the solution to transport equations of the
form \eqref{Euler2=hypoplast-xi} and \eqref{Euler3=hypoplast-xi} to deduce the bounds
\begin{align}
&\|\bm\xi\|_{L^\infty(I;W^{2,r}(\varOmega;\R^d))}\le C\qquad\text{and}\qquad
\|\Fp\|_{L^\infty(I;W^{1,r}(\varOmega;\R^{d\times d}))}\le C.\label{bounds1++}
\end{align}
Since $r>d$, we have $\|\nabla\bm\xi\|_{L^\infty(I\times\varOmega;\R^{d\times d})}\le C$.
Thus $\|\det(\nabla\bm\xi)\|_{L^\infty(I\times\varOmega)}\le C$ and
$\|{\rm Cof}(\nabla\bm\xi)\|_{L^\infty(I\times\varOmega;\R^{d\times d})}\le C$.
Accordingly, we have
$\|\nabla(\det(\nabla\bm\xi))\|_{L^\infty(I;L^r(\varOmega;\R^d))}
=\|{\rm Cof}(\nabla\bm\xi){:}\nabla^2\bm\xi\|_{L^\infty(I;L^r(\varOmega;\R^d))}
\le \|{\rm Cof}(\nabla\bm\xi)\|_{L^\infty(I\times\varOmega;\R^{d\times d})}
\|\nabla^2\bm\xi\|_{L^\infty(I;L^r(\varOmega;\R^{d\times d\times d}))}$.
Thus, we conclude\COMMENT{I was not sure whether the former equality in \eq{cutoff3} is really not needed -- so I merged it but kept. OK?}
\begin{equation}%
\|\det(\nabla\bm\xi)\|_{L^\infty(I;W^{1,r}(\varOmega))}\le C\ \ \ \text{ and }\ \ \
\Big\|\frac 1 {\det\Fe}\Big\|_{L^\infty(I;W^{1,r}(\varOmega))}\le C\,;
\label{cutoff3}\end{equation}
recall that $\det(\nabla\bm\xi)=\det(\Fp^{-1}\Fe^{\!-1})=1/\det\Fe$ since
$\det\Fp=1$.
By %
the embedding exploting $r>d$, we can see that $1/{\det\Fe}$ is bounded in
$L^\infty(I{\times}\varOmega)$. 

Also, using the transport equation that is obeyed by $\Fe$, namely, \eqref{DT-det-1+}, here rewritten as $\pdt{}{\Fe}=(\nabla\vv)\Fe{-}\Fe\Lp{-}(\vv{\cdot}\nabla)\Fe$, observing that  $\Fe(0)=(\nabla\bm\xi_0)^{-1}\FF_{\rm p\, 0}^{-1}\in W^{1,r}(\varOmega;\R^{d\times d})$ by \eqref{Euler-ass-Fe0}, we deduce also the bound
\begin{align}\label{cutoff2}
	\|\Fe\|_{L^\infty(I;W^{1,r}(\varOmega;\R^{d\times d}))}\le C.
\end{align}
Thus, the convected quantities $\Fe$ and $1/\det\Fe$ remain bounded. Specifically, for $\lambda$ a sufficiently large constant that depends on the data \eqref{Euler-ass+}, we obtain \eqref{cutoff}.

Owing to \eqref{cutoff2}, using \eqref{eq:1} and \eqref{cutoff2} we obtain
\begin{equation}
  \|\vv\|_{L^\infty(I;L^2(\varOmega;\R^d))}\le \Big\|\sqrt{\frac {\rhoRxi}{\det\Fe\!}}\,\vv\Big\|_{L^\infty(I;L^2(\varOmega;\R^d))}
\Big\|{\sqrt{\frac{\det\Fe}{\rhoRxi}}}\Big\|_{L^\infty(I\times\varOmega)}\le C.
\end{equation}

By comparison in \eq{Euler4=hypoplast-xi} and estimation by
testing it by $\Lp$, realizing the that Mandel stress on the right-hand side of
\eq{Euler4=hypoplast-xi} is bounded in $L^\infty(I{\times}\varOmega;\R_{\rm dev}^{d\times d})$ due to \eq{cutoff2} with $r>d$, we can improve \eq{bounds0bb} to
\begin{align}\label{cutoff2+}
	\|\Lp\|_{L^\infty(I;W^{1,q}(\varOmega;\R^{d\times d}))}\le C.
\end{align}
In particular $\Fp$ is bounded in
$L^\infty(I{\times}\varOmega;\R_{\rm dev}^{d\times d})$, so that the growth
of $\zeta(\theta,\cdot)$ is irrelevant.

\medskip

\noindent \emph{Step 2: regularization.} Motivated by the a-priori estimates \eqref{cutoff}, we regularize the system \eqref{Euler-hypoplast-xi} by means of the following cutoff function
\begin{align}&\pi_\LAM(\Fe)
=\begin{cases}
\qquad\qquad1&\hspace{-8em}
\text{for $\det \Fe\ge\LAM$ and $|\Fe|\le1/\LAM$,}
\\
\qquad\qquad0&\hspace{-8em}\text{for $\det \Fe\le\LAM/2$ or $|\Fe|\ge2/\LAM$,}
\\
\displaystyle{\Big(\frac{3}{\LAM^2}\big(2\det\Fe-\LAM\big)^2
-\frac{2}{\LAM^3}\big(2\det\Fe-\LAM\big)^3\Big)\,\times}\!\!&
\\[.2em]
\qquad\qquad\displaystyle{\times\,\big(3(\LAM|\Fe|-2)^2
+2(\LAM|\Fe|-2)^3\big)}\!\!&\text{otherwise}.
\end{cases}
\label{cut-off-general}
\end{align}
Note indeed that the function $f(x)=3(x{-}1)^2-2(x{-}1)^3$ we have $f(1)=0$, $f(2)=1$, $f'(1)=f'(2)=0$. Similarly, the function $g(x)=f((2-x)+1)=3(x-2)^2+2(x-2)^3$ satisfies $g(1)=1$, $g(2)=0$, and $g'(1)=g'(2)=0$.
Furthermore, we also regularize the singular nonlinearity $1/\!\det(\cdot)$
which is still employed in the right-hand-side force in the linear momentum
equation. To this aim, 
we introduce the short-hand notation 
\begin{subequations}
\begin{align}\label{cut-off-det}
&{\det}_\LAM(\Fe):=\pi_\lambda(\Fe)\det\Fe+1-\pi_\lambda(\Fe)
\ \ \text{ and }
\\&
\label{cut-off-T}
\TT_{\LAM,\EPS}^{\bm\xi}(\Fe,\theta)=
\Big(\big[(\pi_\LAM\varphi)_{\Fe}'\big]^{\bm\xi}(\Fe)+\pi_\LAM(\Fe)\frac{\big[\COUPLING_\Fe'\big]^{\bm\xi}(\Fe,\theta)}{1{+}\varepsilon|\theta|^\alpha}\Big)\frac{\Fe^\top}{\det(\Fe)}\,.
\end{align}
\end{subequations}
Note that thanks to the selected regularization and thanks to assumption \eqref{Euler-ass-adiab+}, the adiabatic stress is bounded as $\theta\to\infty$ and hence, at the level of mathematical estimates, the mechanical balance equation is decoupled from the heat equation. {Note also that no regularization for ${\rm det}$ is needed, since the cutoff is already achieved by the function $\pi_\lambda$.} 

Altogether, we consider the following system of partial differential equations
for $(\vv_\EPS,\bm\xi_\EPS,\FF_{\rm p\,\EPS},\LL_{\rm p\,\EPS},\theta_\EPS)$:
\begin{subequations}\label{Euler-hypoplast-xi-eps}
\begin{align}
\nonumber
     &%
     \rhoRxieps{\det}(\nabla\bm\xi_\EPS)\DT\vv_\EPS={\rm div}
     (\TT^{\bm\xi_\EPS}_{\lambda,\EPS}(\FF_{\rm e\,\EPS},\theta_\EPS){+}\DD_\EPS)
     +\rhoRxieps{\det}_\lambda(\nabla\bm\xi_\EPS)\GRAVITY%
     \,
     \ \\\nonumber
     &\hspace*{8em}\text{ with }\ \TT^{\bm\xi_\EPS}_{\rm e\,\lambda,\EPS}(\FF_{\rm e\,\EPS},\theta_\EPS)\text{ from }\eqref{cut-off-T}\\
     &\hspace*{8em}\text{ and }\ \DD_\EPS=\nu_0|\ee(\vv_\EPS)|^{p-2}\ee(\vv_\EPS)-{\rm div}\big(
\nu_1|\nabla\EE(\vv_\EPS)|^{p-2}\nabla\EE(\vv_\EPS)\big)\,,
\label{Euler1=hypoplast-xi-eps}
\\\label{Euler2=hypoplast-xi-eps}
&\DT{\bm\xi}_{\EPS}={\bm0}\,,
\\\label{Euler3=hypoplast-xi-eps}
&\DT\FF_{\rm p\,\EPS}=\LL_{\rm p\,\EPS}\FF_{\rm p\,\EPS}\,,
\\\nonumber
&\pl_{\Lp}^{}\zeta(\theta_\EPS;\LL_{\rm p\,\EPS})-
\frac{{\rm div}(\nu_2|\Nabla\LL_{\rm p\,\EPS}|^{q-2}\Nabla\LL_{\rm p\,\EPS})}{{\det}(\nabla\bm\xi_\EPS)}\ni
{\rm dev}\Big(\FF^{\top}_{\rm e\,\EPS}[(\pi_\lambda\varphi)_{\Fe}']^{\bm\xi_\EPS}(\FF_{\rm e\,\EPS})
\\&\hspace{12em}+\pi_\lambda(\FF_{\rm e\,\EPS})\FF_{\rm e\,\EPS}^{\top}\frac{[\gamma_{\Fe}']^{\bm\xi_\EPS}(\FF_{\rm e\,\EPS},\theta_\EPS)}{1+\EPS|\theta_\EPS|^\alpha
}-[\phi'_{\Fp}]^{\bm\xi_\EPS}(\FF_{\rm p\,\EPS})\FF_{\rm p\,\EPS}^\top\Big),\label{Euler4=hypoplast-xi-eps}
\\
&\DT\W_\EPS={\rm div}(\kappa^{\bm\xi_\EPS}(\FF_{\rm e\,\EPS},\theta_\EPS)\nabla\theta_\EPS)-\W_\EPS{\rm div}\,\vv_\EPS
\nonumber\\[0.3em]
&
\qquad\quad+\frac{\nu_0|\ee(\vv_\EPS)|^{p}+\partial_{\Lp}\!\zeta(\theta_\EPS;\LL_{\rm p\,\EPS}){:}\LL_{\rm p\,\EPS}+\nu_1|\nabla\ee(\vv_\EPS)|^p+\nu_2|\nabla\LL_{\rm p\,\EPS}|^q}{1+\EPS|\ee(\vv_\EPS)|^{p}+\EPS\partial_{\Lp}\!\zeta(\theta_\EPS;\LL_{\rm p\,\EPS}){:}\LL_{\rm p\,\EPS}+\EPS|\nabla\ee(\vv_\EPS)|^p+\EPS|\nabla\LL_{\rm p\,\EPS}|^q}
\nonumber
\\&\nonumber\qquad\quad
+\det(\nabla\bm\xi_\EPS){\pi_\lambda(\FF_{\rm e\,\EPS})}\frac{[\COUPLING'_{\Fe}]^{\bm\xi_\EPS}(\FF_{\rm e\,\EPS},\theta_\EPS)}{1+\EPS|\theta_\EPS^\beta|
}{:}\big(\nabla\vv\FF_{\rm e\,\EPS}{-}\FF_{\rm e\,\EPS}\LL_{\rm p\,\EPS}\big)
\\&\hspace{7em}\text{ with }\ \ \W_\EPS=\OMEGA^{\bm\xi_\EPS}(\nabla\bm\xi_\EPS,\FF_{\rm e\,\EPS},\theta_\EPS)\ \ \text{ and }\ \
\FF_{\rm e\,\EPS}=(\FF_{\rm p\,\EPS}\nabla\bm\xi_\EPS)^{-1}\,.
\label{Euler5=hypoplast-xi-eps}
\end{align}\end{subequations}
We complete the system with the regularized boundary conditions
\begin{align}\label{Euler-hypoplast-xi-BC-reg}
\vv_\EPS
=\bm0\,,\ \ \ 
\Nabla\EE(\vv_\EPS){{:}}(\nn{\otimes}\nn)={\bm0}\,,\ \
(\nn{\cdot}\Nabla)\LL_{\rm p\,\EPS}={\bm0},\ \text{ and }\ 
\nn{\cdot}\kappa(\FF_{\rm e\,\EPS},\theta_\EPS)\nabla\theta_\EPS=\frac{h(\theta_\EPS)}{\!1{+}\EPS|h(\theta_\EPS)|\!}
\end{align}
and with the initial conditions%
\begin{equation}\label{ic-eps}
  \vv_\EPS(0)=\vv_0,\ \ \ \ \bm\xi_\EPS(0)=\bm\xi_0,\ \ \ \
\FF_{{\rm p}\,\EPS}(0)=\FF_{{\rm p},0},\ \ \text{ and }\ \ \theta_\EPS(0)=\frac{\theta_0}{1{+}\EPS|\theta_0|}.
\end{equation}

\medskip

\noindent\emph{Step 3: semi-discretization of \eq{Euler-hypoplast-xi-eps}--\eq{ic-eps}.} 
For $\varepsilon>0$ fixed, we use a spatial semi-discretization of \eqref{Euler-hypoplast-xi-eps}, keeping the
transport equations
\eq{Euler2=hypoplast-xi-eps} and \eq{Euler3=hypoplast-xi-eps} continuous (i.e.\
non-discretised) to exploit available results concerning the regularity of their solutions,
\emph{cf.} \cite{Roub22TVSE,Roub22QHLS}.
More specifically, we make a conformal Galerkin approximation of
\eq{Euler1=hypoplast-xi-eps} by using  nested finite-dimensional
subspaces $\{V_k\}_{k\in\N}$ whose union is dense in
$W^{2,p}(\varOmega;\R^d)$. Separately, we make a Galerkin approximation of \eqref{Euler4=hypoplast-xi-eps} by using other nested finite-dimensional subspaces $\left\{W_{l}\right\}_{l \in \mathbb{N}}$ whose union is dense in $W^{1,q}(\Omega ; \R^{d \times d}_{\rm dev})$. We also make a conformal Galerkin approximation of
\eq{Euler5=hypoplast-xi-eps} by using  nested finite-dimensional
subspaces $\{Z_k\}_{k\in\N}$ whose union is dense in $H^1(\varOmega)$.
Without loss of generality, we assume $\vv_0\in V_1$ and 
$\theta_{0,\varepsilon}\in Z_1$.

The approximate solution of the regularized system will be denoted by $(\vv_{\EPS k},\bm\xi_{\EPS k},\FF_{{\rm p}\,\EPS k},\LL_{{\rm p}\,\EPS k},\theta_{\EPS k}):
I\to V_k\times W^{2,r}(\varOmega;\R^{d})\times W^{1,r}(\varOmega;\R^{d\times d})\times W_k\times Z_k$.
Specifically, such a quintuple should satisfy
\begin{align}\label{transport-equations+}
\pdt{{\bm\xi}_{\EPS k}}=-(\vv_{\EPS k}{\cdot}\nabla){\bm\xi}_{\EPS k}\ \ \ \text{ and }
\ \ \ \pdt{{\FF}_{{\rm p}\,\EPS k}}={\LL}_{{\rm p}\,\EPS k}{\FF}_{{\rm p}\,\EPS k}
-(\vv_{\EPS k}{\cdot}\nabla){\FF}_{{\rm p}\,\EPS k},
\end{align}
respectively in the $L^1(I{\times}\varOmega;\R^d)$ and $L^1(I{\times}\varOmega;\R^{d\times d})$ sense, together with the following integral identities
\begin{subequations}\label{Euler-weak-Galerkin+}\begin{align}
&\nonumber
\int_0^T\!\!\!\int_\varOmega\Big(\big(\TT^{\bm\xi_{\EPS k}}_{\LAM,\EPS}(\Fe_{\EPS k},\theta_{\EPS k})
+\nu_0|\ee(\vv)|^{p-2}\ee(\vv)-{{\det}}(\nabla\bm\xi_{\varepsilon k})\rhoRxiepsk\vv_{\varepsilon k}{\otimes}\vv_{\varepsilon k}\big){:}\ee(\widetilde\vv)
\\[-.2em]\nonumber&\hspace{3em}
+\nu_1|\nabla\ee(\vv_{\EPS k})|^{p-2}\nabla\ee(\vv_{\EPS k})\Vdots
\Nabla\ee(\widetilde\vv)-{\operatorname{det}}(\nabla \bm\xi_{\EPS k})
\rhoRxi \vv_{\varepsilon k}{\cdot}\frac{\partial \widetilde{\vv}}{\partial t}\Big)
\,\d\xx\d t
\\[-.2em]\nonumber&\hspace{7em}
=\!\int_0^T\!\!\!\int_\varOmega
{\rm det}_\lambda(\nabla\bm\xi_{\EPS k}){\rhoRxiepsk\gg}{\cdot}\widetilde\vv\,\d\xx\d t+\int_\varOmega{{\det}}(\nabla\bm\xi_0)\rhoRxizero\vv_0\cdot\widetilde\vv(0)\dx\quad\\[-0.2em]
&\hspace{3em}\text{ with }\ \ \
\FF_{{\rm e}\,\EPS k}=(\FF_{{\rm p}\,\EPS k}\nabla\bm\xi_{\EPS k})^{-1}
\label{Euler1-weak-Galerkin+}
\intertext{for any $\widetilde\vv\in C^\infty(I;V_k)$, such that
$\widetilde\vv(T)=0$ and $\vv=0$ on $I{\times}\varGamma$, and}
&\!\int_0^T\!\!\!\int_\varOmega{{\det}}(\nabla\bm\xi_{\EPS k})\zeta(\theta_{\EPS k},\widetilde\Lp)+\nu_2|\nabla\widetilde\Lp|^q\,\d\xx\d t\ge\int_0^T\!\!\!\int_\varOmega\bigg({{\det}}(\nabla\bm\xi_{\EPS k})\zeta(\theta_{\EPS k},\LL_{{\rm p}\,\EPS k})
\nonumber\\[-.2em]&\hspace{3em}
+{{\det}}(\nabla\bm\xi_{\EPS k})\Big({\FF}_{{\rm e}\,\EPS k}^\top[(\pi_\lambda\varphi)'_{\Fe}]^{\bm\xi_{\EPS k}}(\FF_{{\rm e}\,\EPS k})+\pi_\lambda(\FF_{{\rm e}\,\EPS k})\FF^\top_{{\rm e}\,\EPS k}\frac{[\gamma_{\Fe}']^{\bm\xi_{\EPS k}}(\FF_{{\rm e}\,\EPS k},\theta_{\EPS k})}{1+\EPS|\theta_{\EPS k}|^{\REPLACE{\beta}{\alpha}%
}}
\nonumber
\\[-.3em]\label{Euler2-weak-Galerkin+}
&\hspace{3em}-[\phi'_{\Fp}]^{\bm\xi_{\EPS k}}(\FF_{{\rm p}\,\EPS k})\FF_{{\rm p}\,\EPS k}^\top\Big){:}(\widetilde\Lp-\LL_{{\rm p}\,\EPS k})
+\nu_2|\nabla\LL_{{\rm p}\,\EPS k}|^q\bigg)\,\d\xx\d t
\intertext{for any $\widetilde\Lp\in L^\infty(I;W_k)$, and}\nonumber
&\!\int_0^T\!\!\!\int_\varOmega\bigg(\W_{\EPS k}\pdt{\widetilde\theta}
+\big(\W_{\EPS k}\vv_{\EPS k}
{-}\kappa^{\bm\xi_{\EPS k}}(\FF_{{\rm e}\,\EPS k},\theta_{\EPS k})\nabla\theta_{\EPS k}\big)
{\cdot}\nabla\widetilde\theta
\\[-.2em]&\hspace{3em}\nonumber
+\Big(\frac{\nu_0|\ee(\vv)|^{p}+\nu_1|\nabla\EE(\vv_{\EPS k})|^p+\partial_{\Lp}\!\zeta(\theta_{\EPS k};\LL_{{\rm p}\,\EPS k}){:}\LL_{{\rm p}\,\EPS k}+\nu_2|\nabla\LL_{{\rm p}\,\EPS k}|^q\!\!}{1{+}\EPS|\EE(\vv_{\EPS k})|^p{+}\EPS|\nabla\EE(\vv)|^p+\EPS\partial_{\Lp}\!\zeta(\theta_{\EPS k};\LL_{{\rm p}\,\EPS k}){:}\LL_{{\rm p}\,\EPS k}+\EPS|\nabla\LL_{{\rm p}\,\EPS k}|^q}
\\
\nonumber&\hspace{3em}+\frac{\pi_\LAM(\Fe_{\EPS k})\big[\COUPLING'_{\Fe}\big]^{\bm\xi_{\EPS k}}(\Fe_{\EPS k},\theta_{\EPS k})\Fe_{\EPS k}^\top
}{\det_\LAM(\Fe_{\EPS k})(1{+}\EPS|\theta_{\EPS k}|^{\REPLACE{\beta}{\alpha}%
})}{:}\big(\nabla\vv\FF_{{\rm e}\,\EPS k}{-}\FF_{{\rm e}\,\EPS k}\LL_{{\rm p}\,\EPS k}\big)
\Big)\widetilde\theta\,\bigg)\,\d\xx\d t
\\[-.1em]&\hspace{0em}
+\!\int_\varOmega\!\OMEGA^{\bm\xi_0}(\FF_0,\theta_{0,\varepsilon})\widetilde\theta(0)\,\d\xx
+\!\int_0^T\!\!\!\int_\varGamma\!\frac{h(\theta_{\EPS k})}{1{+}\EPS|h(\theta_{\EPS k})|}
\widetilde\theta\,\d S\d t=0
\ \text{ with }\ \W_{\EPS k}=\OMEGA^{\bm\xi_\EPS}(\FF_{{\rm e}\,\EPS k},\theta_{\EPS k})
\label{Euler3-weak-Galerkin+}
\end{align}
\end{subequations}
holds for any $\widetilde\theta\in C^1(I;Z_k)$ with $\widetilde\theta(T)=0$.

{It should be realized that} the {approximate temperatures} $\theta_{\EPS k}$ are not necessarily positive. Accordingly, {for Steps 3--5}, we {have to} extend the functions
$\psi$, $\kappa$, and $h$ to negative temperatures by defining%
\begin{align}\nonumber
&\psi(\XX,\Fe,\theta):=\varphi(\XX,\Fe)+\COUPLING(\XX,\Fe,0)-|\theta|^{\alpha-1}\theta\det\Fe\,,
\\&\kappa(\XX,\Fe,\theta):=\kappa(\XX,\Fe,-\theta)\,,
\ \ \ \text{ and }\ \ \ h(t,\xx,\theta):=h(t,\xx,-\theta)
\ \ \ \text{ for }\ \ \theta<0,
\label{extension-negative+}\end{align}
with $\varphi$ and $\COUPLING$ from the split \eq{psi-split}. This definition
makes $\psi:\varOmega\times{\rm GL}^+(d)\times\R\to\R$ continuous and
implies that, for $\theta$ negative, $\OMEGA(\XX,\Fe,\theta)=(\alpha{-}1)|\theta|^{\alpha-1}\theta$ { so that the primitive of $\widehat\omega$ used below
 will be $|\theta|^{1+\alpha}$ for $\theta<0$} and 
so that $\OMEGA_\XX'(\XX,\Fe,\theta)=0$ and $\OMEGA_\Fe'(\XX,\Fe,\theta)=0$.

The existence of solutions to \eqref{transport-equations+}--\eqref{Euler-weak-Galerkin+}
is guaranteed by the standard theory of systems of set-valued algebraic-differential
inclusions in finite-dimensional spaces combined with evolution-and-transport
equation as in \cite{Roub22QHLS,Roub22TVSE}
first locally in time and then by successive prolongation on the whole time interval based
on the $L^\infty$ estimates below.

We remark that, since the transport equations \eqref{transport-equations+} hold a.e. in $I{\times}\varOmega$, and thanks to the smoothness of their solutions, it is legal to perform the calculations in \eqref{DT-det-1+} to obtain the following transport equations:
\begin{subequations}\label{transport-eqs}
\begin{align}\label{transport-equation-det}	
&\pdt{}{\det(\nabla\bm\xi_{\EPS k})}=-(\vv_{\EPS k}{\cdot}\nabla)\det(\nabla\bm\xi_{\EPS k})-(\det(\nabla\bm\xi_{\EPS k})){\rm div}\bm v_{\EPS k}\,,
  \\
  \label{transport-equation-detinv}	
&\pdt{}\frac 1 {\det(\nabla\bm\xi_{\EPS k})}=-(\vv_{\EPS k}{\cdot}\nabla)\frac 1 {\det(\nabla\bm\xi_{\EPS k})}+\frac 1 {\det(\nabla\bm\xi_{\EPS k})}{\rm div}\bm v_{\EPS k}\,,
\\
\label{transport-equation-Fe}	
&\pdt{}{\FF_{{\rm e}\,\EPS k}}=-(\vv_{\EPS k}{\cdot}\nabla){\FF_{{\rm e}\,\EPS k}}+\nabla\vv_{\EPS\, k}{\FF_{{\rm e}\,\EPS k}}-{\FF_{{\rm e}\,\EPS k}}\LL_{{\rm p}\,\EPS k}\,.
\end{align}
\end{subequations}
In addition, on setting%
\begin{equation}\label{def-rho}
\rho_{\EPS k}={\rhoRxiepsk}
{\rm det}(\nabla\bm\xi_{\EPS k})\,,
\end{equation}
 we have the calculus
\begin{align}  
 \nonumber\pdt{\rho_{\EPS k}}&=\pdt{}(\rhoRxiepsk{\rm det}(\nabla\bm\xi_{\EPS k}))=
  [[\rhoR]_\XX']^{\bm\xi_{\EPS k}}\det(\nabla\bm\xiek)
  \pdt{\bm\xi_{\EPS k}}+\rhoRxiepsk{\rm Cof}(\nabla\bm\xi_{\EPS k})
  {:}\pdt{}\nabla\bm\xi_{\EPS k}
          \\\nonumber  &
=-[[\rhoR]_\XX']^{\bm\xi_{\EPS k}}{\rm det}(\nabla\bm\xi_{\EPS k})(\vv_{\EPS k}
{\cdot}\nabla)\xiek
\\&\qquad\nonumber 
-\rhoRxiepsk{\rm Cof}(\nabla\bm\xi_{\EPS k}){:}\nabla\xiek\nabla\vv_{\EPS k}-\rhoRxiepsk{\rm Cof}(\nabla\xiek)(\vv_{\EPS k}{\cdot}\nabla)\nabla\xiek\\
  \nonumber&=-(\vv_{\EPS k}\cdot\nabla)(\rhoRxiepsk{\det}(\nabla\xiek)
 -\rhoRxiepsk\det(\nabla\xiek){\rm div}\vv_{\EPS k}
  =-{\rm div}(\rho_{\EPS k}\vv_{\EPS k}),
\end{align}
where we have used \eqref{dtnabla}.
This shows that the density $\rho_{\EPS k}$ satisfies the %
continuity equation
\begin{equation}\label{transport-rho}
	\pdt{\rho_{\EPS k}}+{\rm div}(\rho_{\EPS k}\vv_{\EPS k})=0
\end{equation}
in the $L^1(I{\times}\varOmega)$-sense with the initial condition $\rho_{\EPS k}(0)=\rhoRxizero{\rm det}(\nabla\bm\xi_0)$. This initial condition satisfies
\begin{equation}
\nabla\varrho_{\EPS k}(0)={\rm det}(\nabla\bm\xi_0)(\nabla{\bm\xi_0})^\top
[[\rhoR]_\XX']^{\bm\xi_0}+\rhoRxizero{\rm Cof}(\nabla\bm\xi_0){:}\nabla^2{\bm\xi}_0\in L^r(\varOmega;\R^d).
\end{equation}
	\medskip

        \noindent \emph{Step 4: first a priori estimates.}
It follows from \eqref{Euler2-weak-Galerkin+} that there exists a measurable selection of $\partial_{\Lp}^{}\!\zeta(\theta_{\EPS k};\LL_{{\rm p}\,\EPS k})$, which we denote still by $\partial_{\Lp}^{}\!\zeta(\theta_{\EPS k};\LL_{{\rm p}\,\EPS k})$, such that
	\begin{align}\label{eq:dissip}
	&\!\int_0^T\!\!\!\int_\varOmega\!{\det}(\nabla\bm\xi_{\EPS k})\partial_{\Lp}^{}\!\zeta(\theta_{\EPS k},\LL_{{\rm p}\,\EPS k}){:}\LL_{{\rm p}\,\EPS k}+\nu_2|\nabla\LL_{{\rm p}\,\EPS k}|^q\,\d\xx\d t\nonumber\\[-.2em]&\hspace{0em}
	=\int_0^T\!\!\!\int_\varOmega{\det}(\nabla\bm\xi_{\EPS k})\Big({\FF}_{{\rm e}\,\EPS k}^\top[(\pi_\lambda\varphi)'_{\Fe}]^{\bm\xi_{\EPS k}}(\FF_{{\rm e}\,\EPS k})+\pi_\lambda(\FF_{{\rm e}\,\EPS k})\FF^\top_{{\rm e}\,\EPS k}\frac{\![\gamma_{\Fe}']^{\bm\xi_{\EPS k}}(\FF_{{\rm e}\,\EPS k},\theta_{\EPS k})}{1+\EPS|\theta_{\EPS k}|^{\beta/p'}}\Big){:}\LL_{{\rm p}{\,\EPS k}}\,
\d\xx\d t.
\end{align}
We remark that since $\zeta(\theta;\cdot)$ is smooth away from $0$, the product $\partial_{\Lp}^{}\!\zeta(\theta_{\EPS k};{\Lp}_{\EPS k}){:}{\Lp}_{\EPS k}$ is indeed singled valued.

We choose as tests $\widetilde\vv=\vv_{\EPS k}$ for \eq{Euler1-weak-Galerkin+}.
Here we take advantage from the transport equations \eqref{transport-equations+} holding pointwise. This allows us to replicate the formal estimates performed in Sec.~\ref{sec-energetics}. In particular, using also \eqref{eq:dissip}, we obtain
    the {inequality} (\emph{cf.} \eqref{energetics1}):
    \begin{align}\nonumber
  &\hspace*{0em}\frac{\d}{\d t}
  \int_\varOmega\!
{\frac{\rho_{\EPS k}^2}{2}|\vv_{\EPS k}|^2}+
  \frac{\pi_\LAM(\Fe_{\EPS k})\varphi^{\bm\xi_{\EPS k}}(\Fe_{\EPS k})\!}{\det(\Fe_{\EPS k})}
  \,\d\xx
\\[-.1em]\nonumber&\hspace{2em}
+\!\int_\varOmega\!{\nu_0|\EE(\vv_{\EPS k}))|^p
+\nu_1|\Nabla\EE(\vv_{\EPS k})|^p+{{\rm det}(\nabla\bm\xi_{\EPS k})}\partial_{\Lp}\!\zeta(\theta;{\LL}_{{\rm p}\,\EPS k}){:}{\LL}_{{\rm p}\,\EPS k}+\nu_2|\nabla\LL_{{\rm p}\,\EPS k}|^q}\,\d\xx
\\[-.1em]&\hspace{3em}
\le\int_\varOmega\frac{\varrho^{\boldsymbol\xi_{\EPS k}}_{\textsc r}\GRAVITY{\cdot}\vv_{\EPS k}}{\det_\LAM(\FF_{\EPS k})\!\!}
-\frac{\pi_\LAM(\Fe_{\EPS k})\COUPLING_{\Fe}'(\Fe_{\EPS k},\theta_{\EPS k})}{(1{+}\EPS|\theta_{\EPS k}|^{\alpha%
}
)\det(\Fe_{\EPS k})}{:}(\nabla\vv_{\EPS k}\FF_{{\rm e}\,\EPS k}{-}\FF_{{\rm e}\,\EPS k}\LL_{{\rm p}\,\EPS k})\,\d\xx.
\label{thermodynamic-Euler-mech-disc+}
\end{align}
The first term on the right-hand side, representing the power of the bulk force can be estimated as in \eqref{eq:70}. The estimate of the second term on the right-hand side, representing the power of the adiabatic stress, is even simpler than \eqref{est-rhs2}--\eqref{eq:4}, because the adiabatic stress in the regularized system is bounded and vanishes for $|\FF_{{\rm e}\,\EPS k}|\ge 1/\lambda$. In particular,
\begin{align}
&\int_\varOmega\frac{\pi_\LAM(\Fe_{\EPS k})\COUPLING_{\Fe}'(\Fe_{\EPS k},\theta_{\EPS k})}{(1{+}\EPS|\theta_{\EPS k}|^{\alpha%
}
)\det(\Fe_{\EPS k})}{:}(\nabla\vv_{\EPS k}\FF_{{\rm e}\,\EPS k} {-}\FF_{{\rm e}\,\EPS k}\LL_{{\rm p}\,\EPS k})\,\d\xx
\nonumber\\[-.3em]
&\hspace{13em}\le C_a+a\|\ee(\vv_{\EPS k})\|^p_{L^p(\varOmega;\R^{d\times d}_{\rm sym})}+a\|\LL_{{\rm p}\,\EPS k}\|^q_{L^q(\varOmega;\R^{d\times d}_{\rm dev})}.	
\end{align}
For $a$ sufficiently small the last two terms can be absorbed on the left-hand side of \eqref{thermodynamic-Euler-mech-disc+}. Then we can apply the Gronwall inequality and we obtain the following bounds independent on $k$:
\begin{subequations}\label{bounds0ab}
\begin{align}
&\Big\|\sqrt{\rho_{\EPS k}}\vv_{\EPS k}\Big\|_{L^\infty(I;L^2(\varOmega;\R^d))}\le C_\EPS,
&&\|\ee(\vv_{\EPS k})\|_%
{L^p(I;W^{1,p}(\varOmega;\mathbb R^{d\times d}))}^{}
\le C_\EPS,
\label{bounds0a}\\
&\|{\LL}_{{\rm p}\,\EPS k}\|_{L^2(I\times\varOmega;\mathbb R^{d\times d})}\le C_\EPS,\qquad\quad\text{ and}
&&\|\nabla{\LL}_{{\rm p}\,\EPS k}\|_{L^q(I\times\varOmega;\mathbb R^{d\times d\times d})}\le C_\EPS.\label{bounds0b}
\end{align}
\end{subequations}

We can now argue as in \eqref{bound0c+} to obtain 
\begin{align}\label{bound0c}
\|\vv_{\EPS k}\|_{L^p(I;W^{2,p}(\varOmega;\R^d))}\le C_{\EPS}.
\end{align}
Then we use the regularity results concerning the solutions of the transport equations \eqref{transport-equations+} and (\ref{transport-eqs}a-b) to obtain
\begin{subequations}
\begin{align}
  \label{bounds0}&\|\bm\xi_{\EPS k}\|_{L^\infty(I;W^{2,r}(\varOmega;\R^d))}\le C_\EPS,\\
&\|\FF_{{\rm p}\,\EPS k}\|_{L^\infty(I;W^{1,r}(\varOmega;\R^d))}\le C_\EPS.
\\
                 &\|\det(\nabla\bm\xi_{\EPS k})\|_{L^\infty(I;W^{1,r}(\varOmega))}\le C_\EPS,\label{bounds2}\\
    &\Big\|\frac 1 {\det(\nabla\bm\xi_{\EPS k})}\Big\|_{L^\infty(I;W^{1,r}(\varOmega))}\le C_\EPS,\label{bounds2b}
    \end{align}
Also, using the transport equation \eqref{transport-equation-Fe}, by the assumed regularity of the initial conditions, and arguing as in \eqref{cutoff2}
{and as in \eq{cutoff2+}}, we deduce
\begin{align}\label{cutoff2-eps-k}
\|\FF_{{\rm e}\,\EPS k}\|_{L^\infty(I;W^{1,r}(\varOmega;\R^{d\times d}))}\le C_\EPS\ \ \
{\text{ and }\ \ \ \|\LL_{{\rm p}\,\EPS k}\|_{L^\infty(I;W^{1,q}(\varOmega;\R^{d\times d}))}\le C_\EPS}.
\end{align}

From the transport equation \eqref{transport-rho} for $\rho_{\EPS k}$ we also obtain the estimate
\begin{equation}\label{eq:11}
    \|\rho_{\EPS k}\|_{L^\infty(I;W^{1,r}(\varOmega))}\le C_\EPS.
  \end{equation}
  Furthermore, from $\nabla(1/{\rho_{\EPS k}})=-(1/{\rho_{\EPS k}^2})\nabla\rho_{\EPS k}$, using \eqref{eq:11},
 we obtain the estimate
\begin{equation}\label{eq:11b}
    \Big\|\frac 1 {\rho_{\EPS k}}\Big\|_{L^\infty(I;W^{1,r}(\varOmega))}\le C_\EPS.
  \end{equation}
  This allows us to estimate the velocity by
  \begin{equation}\label{pippo:1}
    \|\vv_{\EPS k}\|_{L^\infty(I;L^2(\varOmega;\R^d))}\le\|\sqrt{\rho_{\EPS k}}\vv_{\EPS k}\|_{L^\infty(I;L^2(\varOmega;\R^d))}\Big\|\frac 1 {\sqrt{\rho_{\EPS k}}}\Big\|_{L^\infty(I\times\varOmega)}\le C_\EPS.
  \end{equation}
 By comparison in the first equation in \eqref{transport-equations+} and in \eqref{transport-equation-Fe},  using the estimates \eqref{bound0c}, \eqref{bounds0}, and \eqref{cutoff2-eps-k}, we have 
\begin{align}\label{DTFxi}
\Big\|\frac{\partial\FF_{{\rm e}\,\EPS k}}{\partial t}\Big\|_{L^{\min(p,q)}(I;L^r(\varOmega;\R^{d\times d}))}\le C_\EPS\qquad\text{and}\qquad \Big\|\frac{\partial\bm\xi_{\EPS k}}{\partial t}\Big\|_{L^p(I;W^{1,r}(\varOmega;\R^d))}\le C_\EPS.
\end{align}
Similarly, by comparison in \eqref{transport-rho}, we obtain
\begin{equation}\label{eq:12}
 \Big\|\pdt{\rho_{\EPS k}}\Big\|_{L^p(I;L^r(\varOmega))}\le C_\varepsilon.
\end{equation}
\end{subequations}
We now apply the $L^2$ theory to the heat equation, testing it by $\theta_{\EPS k}$. Note that this test is legal because the heat source terms and the adiabatic terms on the right-hand side have been regularized. The test of the discretized heat equation \eqref{Euler3-weak-Galerkin+} by $\theta_{\EPS k}$ gives
\begin{align}\nonumber
&\int_\varOmega\theta_{\EPS k}\pdt{}\OMEGA(\FF_{{\rm e}\EPS k},\theta_{\EPS k})
+\kappa^{{\bm\xi}_{\EPS k}}(\FF_{\EPS k},\theta_{\EPS k})|\nabla\theta_{\EPS k}|^2\,\d\xx
\\[-.2em]&\hspace{1em}\nonumber
=\int_\varOmega\!\bigg(
\Big(\frac{\nu_0|\ee(\vv_{\EPS k})|^{p}+\nu_1|\nabla\EE(\vv_{\EPS k})|^p+\partial_{\Lp}\!\zeta(\theta_{\EPS k};\LL_{{\rm p}\,\EPS k}){:}\LL_{{\rm p}\,\EPS k}+\nu_2|\nabla\LL_{{\rm p}\,\EPS k}|^q\!\!}{1{+}\EPS|\EE(\vv_{\EPS k})|^p{+}\EPS|\nabla\EE(\vv)|^p+\EPS\partial_{\Lp}\!\zeta(\theta_{\EPS k};\LL_{{\rm p}\,\EPS k}){:}\LL_{{\rm p}\,\EPS k}+\EPS|\nabla\LL_{{\rm p}\,\EPS k}|^q}
\\\nonumber
&\hspace{4em}+\frac{\pi_\LAM(\Fe_{\EPS k})[\COUPLING'_{\Fe}]^{\bm\xi_{\EPS k}}(\Fe_{\EPS k},\theta_{\EPS k})}{\det_\LAM(\Fe_{\EPS k})
(1{+}\EPS|\theta_{\EPS k}|^{\alpha%
}
)}{:}(\nabla\vv_{\EPS\,k}\FF_{{\rm e}\,\EPS k}{-}\FF_{{\rm e}\,\EPS k}\LL_{{\rm p}\,\EPS k})\Big)\theta_{\EPS k}
\\[-.1em]&\hspace{6em}
+\OMEGA^{\bm\xi_{\EPS k}}(\FF_{\EPS k},\theta_{\EPS k})\vv_{\EPS k}{\cdot}
\nabla\theta_{\EPS k}\bigg)\,\d\xx
+\int_\varGamma
\frac{h(\theta_{\EPS k})}{1{+}\EPS |h(\theta_{\EPS k})|}\theta_{\EPS k}\,\d S\,.
\label{Euler3-Galerkin-est+++}\end{align}
We will use the calculus%
\begin{align}\nonumber
&\int_\varOmega\theta_{\EPS k}\pdt{\W_{\EPS k}}\,\d\xx
=\int_\varOmega\Big(\theta_{\EPS k}[\OMEGA_\theta']^{\bm\xi_{\EPS k}}({\FF_{{\rm e}\,\EPS k}},\theta_{\EPS k})\pdt{\theta_{\EPS k}}
+\theta_{\EPS k}[\OMEGA_\Fe']^{\bm\xi_{\EPS k}}({\FF_{{\rm e}\,\EPS k}},\theta_{\EPS k}){:}\pdt{{\FF_{{\rm e}\,\EPS k}}}\\
&\hspace{22.3em}\nonumber+\theta_{\EPS k}[\omega'_\XX]^{\bm\xi_{\EPS k}}({\FF_{{\rm e}\,\EPS k}},\theta_{\EPS k})\cdot\pdt{\bm\xi_{\EPS k}}\Big)
\,\d\xx 
\\&\hspace{0em}\nonumber
=\frac{\d}{\d t}\int_\varOmega\widehat\OMEGA^{\bm\xi_{\EPS k}}({\FF_{{\rm e}\,\EPS k}},\theta_{\EPS k})\,\d\xx
-\!\int_\varOmega\!\Big([\widehat\OMEGA_\Fe']^{\bm\xi_{\EPS k}}({\FF_{{\rm e}\,\EPS k}},\theta_{\EPS k})
{-}\theta_{\EPS k}[\OMEGA_\Fe']^{\bm\xi_{\EPS k}}({\FF_{{\rm e}\,\EPS k}},\theta_{\EPS k})\Big){:}\pdt{{\FF_{{\rm e}\,\EPS k}}}\,\d\xx
\\&\hspace{10em}-\int_\varOmega\Big([\widehat\OMEGA_\XX']^{\bm\xi_{\EPS k}}({\FF_{{\rm e}\,\EPS k}},\theta_{\EPS k})
{-}\theta_{\EPS k}[\OMEGA_\XX']^{\bm\xi_{\EPS k}}({\FF_{{\rm e}\,\EPS k}},\theta_{\EPS k})\Big){:}\pdt{\bm\xi_{\EPS k}}\,\d\xx,
\label{Euler-thermodynam3-test+++}
\end{align}
where $\widehat\OMEGA(\XX,\FF,\theta)$ is a primitive function of
$\theta\mapsto\theta\OMEGA_\theta'(\XX,\Fe,\theta)$ depending smoothly
on $\Fe$ and $\XX$, specifically, recalling \eqref{split-omega}, 
\begin{align}
\widehat\OMEGA(\XX,\Fe,\theta)=\!\int_0^1\!\!r\theta^2\OMEGA_\theta'(\XX,\Fe,r\theta)\,\d r=\!\int_0^1\!\!r\theta^2\varpi_\theta'(\XX,\Fe,r\theta)\,\d r+\varsigma\alpha(\alpha{-}1)|\theta|^{1+\alpha},
\label{primitive}\end{align}
so that
\begin{align}\label{eq:omegahat1}
\widehat\OMEGA_\Fe'(\XX,\Fe,\theta)=\int_0^1\!\!r\theta^2\varpi_{\Fe\theta}''(\XX,\Fe,r\theta)\,\d r\ \text{ and }\
\widehat\OMEGA_\XX'(\XX,\Fe,\theta)=\int_0^1\!\!r\theta^2\varpi_{\XX\theta}''(\XX,\Fe,r\theta)\,\d r.
\end{align}
By \eqref{cutoff2-eps-k}, the elastic strains are valued in some bounded set
(depending possibly on $\EPS$).
Therefore, thanks to assumption
\eq{Euler-ass-primitive-c+}, by the first equation in \eqref{eq:omegahat1}, we have $|[\widehat\OMEGA_\Fe']^{\bm\xi_{\EPS k}}({\FF_{{\rm e}\,\EPS k}},\theta_{\EPS k})|\le C_\EPS(1{+}|\theta_{\EPS k}|^{{(1+\alpha)}/r'})$ and $|\theta_{\EPS k}[\omega'_{\Fe}]^{\bm\xi_{\EPS k}}({\FF_{{\rm e}\,\EPS k}},\theta_{\EPS k})|=|\theta_{\EPS k}[\varpi'_{\Fe}]^{\bm\xi_{\EPS k}}({\FF_{{\rm e}\,\EPS k}},\theta_{\EPS k})|\le C_\EPS(1{+}|\theta_{\EPS k}|^{(1+\alpha)/r'})$. 
Thus, $|[\widehat\OMEGA_\Fe']^{\bm\xi_{\EPS k}}({\FF_{{\rm e}\,\EPS k}},\theta_{\EPS k})-[\theta_{\EPS k}\omega'_{\Fe}]^{\bm\xi_{\EPS k}}({\FF_{{\rm e}\,\EPS k}},\theta_{\EPS k})|^{r'}\le C_\EPS(1{+}|\theta_{\EPS k}|^{1+\alpha})$. Hence, the penultimate integral on the right-hand side of 
\eq{Euler-thermodynam3-test+++} can be estimated as
\begin{align}\nonumber
&\int_\varOmega\!\Big([\widehat\OMEGA_\Fe']^{\bm\xi_{\EPS k}}(\FF_{{\rm e}\EPS k},\theta_{\EPS k})
{-}\theta_{\EPS k}[\OMEGA_\Fe']^{\bm\xi_{\EPS k}}(\FF_{{\rm e}\EPS k},\theta_{\EPS k})\Big){:}\pdt{\FF_{{\rm e}\EPS k}}\,\d\xx
\\\nonumber
&\le
\,\Big\|[\widehat\OMEGA_\Fe']^{\bm\xi_{\EPS k}}(\FF_{{\rm e}\EPS k},\theta_{\EPS k})
{-}\theta_{\EPS k}[\OMEGA_\Fe']^{\bm\xi_{\EPS k}}(\FF_{{\rm e}\EPS k},\theta_{\EPS k})\Big\|_{L^{r'}(\varOmega)}\Big\|\pdt{\FF_{{\rm e}\EPS k}}\Big\|_{L^r(\varOmega;\R^{d\times d})}\\
&= \Big\|\pdt{\FF_{{\rm e}\EPS k}}\Big\|_{L^r(\varOmega;\R^{d\times d})}\bigg(\int_\varOmega\big([\widehat\OMEGA_\Fe']^{\bm\xi_{\EPS k}}(\FF_{{\rm e}\EPS k},\theta_{\EPS k})
{-}\theta_{\EPS k}[\OMEGA_\Fe']^{\bm\xi_{\EPS k}}(\FF_{{\rm e}\EPS k},\theta_{\EPS k})\big)^{r'}\,\d\xx\bigg)^{1/r'}\nonumber
\\
                &\le C_\EPS\bigg\|\pdt{\FF_{{\rm e}\EPS k}}\Big\|_{L^r(\varOmega;\R^{d\times d})}\Big(1{+}\!\int_\varOmega\!|\theta_{\EPS k}|^{1+\alpha}\,\d\xx\bigg)^{1/r'}\!\!\!\!
                \nonumber
                  \le \frac{C_\EPS}{r}\Big\|\pdt{\FF_{{\rm e}\EPS k}}\Big\|^r_{L^r(\varOmega;\R^{d\times d})}\!\!+\frac 1 {r'}\bigg(1{+}\!\int_\varOmega\!|\theta_{\EPS k}|^{1+\alpha}\,\d\xx\bigg).%
\end{align}%
In a similar fashion, the growth assumption \eqref{Euler-ass-primitive-c+} and
eqref{eq:omegahat1} imply that $|[\widehat\OMEGA_\XX']^{\bm\xi_{\EPS k}}(\FF_{{\rm e}\,\EPS k},\theta_{\EPS k})|\le C_\EPS(1+|\theta_{\EPS k}|^{1+\alpha})$ and $|\theta_{\EPS k}[\OMEGA_\XX']^{\bm\xi_\EPS k}(\FF_{{\rm e}\EPS k},\theta_{\EPS k})|\le C_\EPS(1+|\theta_{\EPS k}|^{1+\alpha})$, therefore
\begin{align}
	\nonumber
\int_\varOmega\!\Big([\widehat\OMEGA_\XX']^{\bm\xi_{\EPS k}}&({\FF_{{\rm e}\,\EPS k}},\theta_{\EPS k})
{-}\theta_{\EPS k}[\OMEGA_\XX']^{\bm\xi_{\EPS k}}({\FF_{{\rm e}\,\EPS k}},\theta_{\EPS k})\Big){:}\pdt{\bm\xi_{\EPS k}}\,\d\xx
\\
&\nonumber\le
\,\Big\|[\widehat\OMEGA_\XX']^{\bm\xi_{\EPS k}}({\FF_{{\rm e}\,\EPS k}},\theta_{\EPS k})
{-}\theta_{\EPS k}[\OMEGA_\XX']^{\bm\xi_{\EPS k}}({\FF_{{\rm e}\,\EPS k}},\theta_{\EPS k})\Big\|_{L^{1}(\varOmega)}\Big\|\pdt{\bm\xi_{\EPS k}}\Big\|_{L^\infty(\varOmega;\R^d)}
\nonumber\\
&=
\,\Big\|\pdt{\bm\xi_{\EPS k}}\Big\|_{L^\infty(\varOmega;\R^d)}\int_\Omega 
\big|[\widehat\OMEGA_\XX']^{\bm\xi_{\EPS k}}({\FF_{{\rm e}\,\EPS k}},\theta_{\EPS k})
{-}\theta_{\EPS k}[\OMEGA_\XX']^{\bm\xi_{\EPS k}}({\FF_{{\rm e}\,\EPS k}},\theta_{\EPS k})\big|\,\d\xx
\nonumber
\\&\nonumber
\le
C_\EPS\Big\|\pdt{\bm\xi_{\EPS k}}\Big\|_{L^\infty(\varOmega;\R^d)}\bigg(1+\int_\varOmega|\theta_{\EPS k}|^{1+\alpha}\,\d\xx\bigg).
\end{align}
The regularized dissipative and adiabatic heat contributions in the right-hand side of \eq{Euler3-Galerkin-est+++}
can be bounded from above by
$C_{\EPS}\|\FF_{\EPS k}\|_{L^\infty(\varOmega;\R^{d\times d})}(\|\nabla\vv_{\EPS k}\|_{L^\infty(\varOmega;\R^{d\times d})}+\|\LL_{{\rm p}\,\EPS k}\|_{L^\infty(\varOmega;\R^{d\times d}_{\rm dev})})(1+\|\theta_{\EPS k}\|_{L^{1+\alpha}(\varOmega)}^{1+\alpha})$ while the boundary term can be estimated by \eq{Euler-ass-h+}, taking also
into the account the extension \eq{extension-negative+}, as 
\begin{align}\nonumber
\int_\varGamma\frac {h(\theta_{\EPS k})}{1{+}\EPS|h(\theta_{\EPS k})|}\,\theta_{\EPS k}\,\d S
&\le C_{\EPS,a}+a\|\theta_{\EPS k}\|_{L^2(\varGamma)}^2
\\&\le C_{\EPS,a}+a N^2\big(\|\theta_{\EPS k}\|_{L^2(\varOmega)}^2
+\|\nabla\theta_{\EPS k}\|_{L^2(\varOmega;\R^d)}^2\big)\,\nonumber
\\&\le C_{\EPS,a}+a N^2(1+\|\theta_{\EPS k}\|_{L^{1+\alpha}(\varOmega)}^{1+\alpha})
+a N^2\|\nabla\theta_{\EPS k}\|_{L^2(\varOmega;\R^d)}^2\,,
\label{boundary-heat-est++++}
\end{align}
where $C_{\EPS,a}$ depends also on $C$ from \eq{Euler-ass-h+}, 
$N$ is the norm of the trace operator $H^1(\varOmega)\to L^2(\varGamma)$.
For $a>0$ in \eq{boundary-heat-est++++} sufficiently small, the last term
can be absorbed in the left-hand side of \eq{Euler3-Galerkin-est+++}, cf.\
\eq{boundary-heat-est++} below.

The convective term $\OMEGA(\FF_{\EPS k},\theta_{\EPS k})\vv_{\EPS k}{\cdot}\nabla\theta_{\EPS k}$
in \eq{Euler3-Galerkin-est+++} can be estimated using the linear growth of $\varpi$ with respect to $\theta$ as%
\begin{align}
&\!\!\int_\varOmega\OMEGA(\FF_{{\rm e}\,\EPS k}\theta_{\EPS k})\vv_{\EPS k}{\cdot}\nabla\theta_{\EPS k}\,\d\xx\nonumber
=
\int_\varOmega\varpi(\FF_{{\rm e}\,\EPS k},\theta_{\EPS k})\vv_{\EPS k}{\cdot}\nabla\theta_{\EPS k}+\varsigma(\alpha{-}1)|\theta_{\EPS k}|^{\alpha-1}\theta_{\EPS k}\vv_{\EPS k}{\cdot}\nabla\theta_{\EPS k}\,\d\xx\ \ \ \ \ \
\\
\!&=\int_\varOmega\varpi(\FF_{{\rm e}\,\EPS k},\theta_{\EPS k})\vv_{\EPS k}{\cdot}\nabla\theta_{\EPS k}+\varsigma\frac{\alpha{-}1}{\alpha{+}1}\vv_{\EPS k}{\cdot}\nabla|\theta_{\EPS k}|^{1+\alpha}\dx
\nonumber
\\
&\!=\int_\varOmega\varpi(\FF_{{\rm e}\,\EPS k},\theta_{\EPS k})\vv_{\EPS k}{\cdot}\nabla\theta_{\EPS k}-\varsigma\frac{\alpha{-}1}{\alpha{+}1}{\rm div}\vv_{\EPS k}|\theta_{\EPS k}|^{1+\alpha}\dx+\varsigma\frac{\alpha{-}1}{\alpha{+}1}\int_\varGamma |\theta_{\EPS k}|^{1+\alpha}\underbrace{\vv_{\EPS k}{\cdot}\nn}_{=0}{\rm d}S
\nonumber
\\
&\!\le C_a\|\vv_{\EPS k}\|^2_{L^\infty(\varOmega;\R^d)}\|\varpi(\FF_{\EPS k},\theta_{\EPS k})\|_{L^{\ONEALPH}(\varOmega)}^{\ONEALPH}+a
\|\nabla\theta_{\EPS k}\|^2_{L^2(\varOmega;\R^d)}+C\|{\rm div}\,\vv_{\EPS k}\|_{L^\infty(\varOmega)}\|\theta_{\EPS k}\|^{1+\alpha}_{L^{1+\alpha}(\varOmega))}
\nonumber
\\[.1em]
&\!\le C_a\|\vv_{\EPS k}\|^2_{L^\infty(\varOmega;\R^d)}\|\theta_{\EPS k}\|_{L^{\ONEALPH}(\varOmega)}^{\ONEALPH}+a
\|\nabla\theta_{\EPS k}\|^2_{L^2(\varOmega;\R^d)}+C\|{\rm div}\,\vv_{\EPS k}\|_{L^\infty(\varOmega)}\|\theta_{\EPS k}\|^{1+\alpha}_{L^{1+\alpha}(\varOmega))}
\nonumber
\\[.1em]
&\!\le C_a\|\vv_{\EPS k}\|^2_{L^\infty(\varOmega;\R^d)}(1{+}\|\theta_{\EPS k}\|_{L^{1+\alpha}(\varOmega)}^{1+\alpha})+a
\|\nabla\theta_{\EPS k}\|^2_{L^2(\varOmega;\R^d)}\!+C\|{\rm div}\,\vv_{\EPS k}\|_{L^\infty(\varOmega)}\|\theta_{\EPS k}\|^{1+\alpha}_{L^{1+\alpha}(\varOmega)}\!
\label{Euler-thermodynam3-test2}\end{align}
with some constant $C_a$ depending on $a$ and on $C_K$ from (\ref{Euler-ass+}f). Then the term $a\|\nabla\theta_{\EPS k}\|^2_{L^2(\varOmega;\R^d)}$ can be absorbed on the right--hand side of \eqref{Euler3-Galerkin-est+++}.

From \eq{Euler3-Galerkin-est+++}, we thus obtain the estimate%
\begin{align}\nonumber
&\frac{\d}{\d t}\int_\varOmega\!\widehat\OMEGA^{\bm\xi}({\FF_{{\rm e}\,\EPS k}})\,\d\xx
+\Big(\inf_{\XX,\FF,\theta}\kappa(\XX,\FF,\theta){-}a(1{+}N^2)\Big)\!\int_\varOmega\!|\nabla\theta_{\EPS k}|^2\,\d\xx
\\
&\nonumber\quad\le \Big(\Big\|\pdt{\FF_{{\rm e}\,\EPS k}}\Big\|_{L^r(\varOmega;\R^{d\times d})}\!+\Big\|\pdt{\bm\xi_{\EPS k}}\Big\|_{L^\infty(\varOmega;\R^d)}\Big)
(1+\|\theta_{\EPS k}\|_{L^{1+\alpha}(\varOmega)}^{1+\alpha})
\\
\nonumber&\qquad+C_\varepsilon\|\FF_{{\rm e}\,\EPS k}\|_{L^\infty(\varOmega;\R^{d\times d})}(\|\nabla\vv_{\EPS k}\|_{L^\infty(\varOmega;\R^{d\times d})}{+}\|\LL_{{\rm p}\,\EPS k}\|_{L^\infty(\varOmega;\R^{d\times d})})(1{+}\|\theta_{\EPS k}\|_{L^{1+\alpha}(\varOmega)}^{1+\alpha})\\
&\qquad+(C_a\|\vv_{\EPS k}\|_{L^\infty(\varOmega;\R^d)}+C\|{\rm div}\vv_{\EPS k}\|_{L^\infty(\varOmega;\R^d)})(1+\|\theta_{\EPS k}\|_{L^{1+\alpha}(\varOmega)}^{1+\alpha})\,.
\label{boundary-heat-est++}\end{align}
It follows from the definitions of $\widehat\omega$ and $\omega$ that $|\theta_{\EPS k}|^{1+\alpha}\le C\widehat\omega^{\bm\xi_{\EPS k}}(\FF_{{\rm e}\,\EPS k},\theta_{\EPS k})$
Thus, by the Gronwall inequality, exploiting \eqref{bound0c} and \eqref{DTFxi} we obtain
\begin{subequations}\label{Euler-quasistatic-est2}
\begin{align}\label{Euler-quasistatic-est2-theta+}
&\|\theta_{\EPS k}\|_{L^\infty(I;L^{1+\alpha}(\varOmega))\,\cap\,L^2(I;H^1(\varOmega))}^{}\le C\,.
\end{align}
Our next goal is to prove the strong convergence of the temperature field $\theta_{\EPS k}$. To this aim, we would like to apply some generalized version of the Aubin-Lions lemma. Note however that the heat equation contains the time derivative of $w_{\EPS k}=\omega^{\boldsymbol\xi_{\EPS k}}(\FF_{\EPS k},\theta_{\EPS k})$, rather than $\theta_{\EPS k}$. We therefore resort to first showing the strong convergence of $w_{\EPS k}$ and then relying on the invertibility and continuity of the map $\theta\mapsto\omega^{\boldsymbol{\xi}_{\EPS k}}(\FF_{\EPS k},\theta)$.

We begin by estimating $w_{\EPS k}=\omega^{\bm\xi_{\EPS k}}(\Fe,\theta)$. We recall the split \eqref{representation-omega} of $\omega(\XX,\Fe,\theta)$ into the sum of $c_1(\alpha-1)\theta^\alpha$ with $1<\alpha<2$ and a term $\varpi(\XX,\Fe,\theta)$ satisfying assumptions \eqref{Euler-ass-primitive-c+}. Accordingly, we have the split $\omega^{\bm\xi_{\EPS k}}(\Fe_{\EPS k},\theta_{\EPS k})=c_1(\alpha{-}1)\theta_{\EPS k}^\alpha+\varpi^{\bm\xi_{\EPS k}}(\Fe_{\EPS k},\theta_{\EPS k})$. Next, we notice that
    by \eqref{cutoff2-eps-k}, thanks to the hypothesis that $r>d$, there exists a compact set $K\subset{\rm GL}^+(d)$ such that $\Fe_{\EPS k}(t)\in K$ for a.e. $t\in I$. Thus, by assumption \eqref{Euler-ass-primitive-c+}, this implies that $\varpi^{\bm\xi_{\EPS k}}(\FF_{{\rm e}\EPS k},\theta_{\EPS k})\le C_K(1{+}\theta_{\EPS k})$ for some constant $C_K$, and that overall $w_{\EPS k}=\omega^{\bm\xi_{\EPS k}}(\FF_{{\rm e}\EPS k},\theta_{\EPS k})\le C(1+\theta_{\EPS k}^\alpha)$ for some constant $C$. Thus, by \eqref{Euler-quasistatic-est2-theta+}, we conclude that
    \begin{equation}\label{eq:w}
    	\|\W_{\EPS k}\|_{L^\infty(I;L^{1+1/\alpha}(\varOmega))}\le C_\varepsilon.
    \end{equation}
We next compute
\begin{equation}\label{eq:grad}
\nabla\W_{\EPS k}=[\OMEGA_\theta']^{\bm\xi_{\EPS k}}(\FF_{{\rm e}\,\EPS k},\theta_{\EPS k})\nabla\theta_{\EPS k}
+[\OMEGA_\Fe']^{\bm\xi_{\EPS k}}(\FF_{{\rm e}\,\EPS k},\theta_{\EPS k})\nabla\FF_{{\rm e}\,\EPS k}
+[\OMEGA_\XX']^{\bm\xi_{\EPS k}}(\FF_{{\rm e}\,\EPS k},\theta_{\EPS k})\nabla\bm\xi_{\EPS k}	.
\end{equation}
Using again the fact that $\Fe_{\EPS k}(t)$ is {valued} in some compact set $K$ for a.e. $t\in I$, we can exploit assumptions \eqref{Euler-ass-primitive-c+} to deduce that  $[\varpi'_\theta]^{\bm\xi_{\EPS k}}(\Fe_{\EPS k},\theta_{\EPS k})\le C_K$ a.e. in $I{\times}\varOmega$ and that by \eqref{Euler-quasistatic-est2-theta+}, 
 we see that $[\omega'_\theta]^{\bm\xi_{\EPS k}}(\Fe_{\EPS k},\theta_{\EPS k})$ is bounded by $C(1+\theta_{\EPS k}^{\alpha-1})$ for some constant $C$. Thus, by invoking the estimate \eqref{Euler-quasistatic-est2-theta+}, we realize that 
 $[\OMEGA_\theta']^{\bm\xi_{\EPS k}}(\FF_{{\rm e}\,\EPS k},\theta_{\EPS k})$ is bounded in
$L^\infty(I;L^{(1+\alpha)/(\alpha-1)}(\varOmega))$. Therefore, by \eqref{Euler-quasistatic-est2-theta+},
\begin{equation}\label{eq:111}
	\big\|[\OMEGA_\theta']^{\bm\xi_{\EPS k}}(\FF_{{\rm e}\,\EPS k},\theta_{\EPS k})\nabla\theta_{\EPS k}\big\|_{L^2(I;L^{2(1+\alpha)/(3\alpha-1)})}\le C_\varepsilon.
\end{equation}
Likewise, using again \eqref{representation-omega} together with \eqref{Euler-ass-primitive-c+}, we see that $[\OMEGA_\Fe']^{\bm\xi_{\EPS k}}(\FF_{{\rm e}\,\EPS k},\theta_{\EPS k})$ grows at most as $\theta^\alpha$; thus, by \eqref{Euler-quasistatic-est2-theta+} we have that
$[\OMEGA_\Fe']^{\bm\xi_{\EPS k}}(\FF_{{\rm e}\,\EPS k},\theta_{\EPS k})$
is bounded in $L^\infty(I;L^{%
{1+1/\alpha}}(\varOmega;\R^{d\times d}))$. As already observed, $\nabla\FF_{\EPS k}$ is bounded in $L^\infty(I;L^r(\varOmega;\R^{d\times d\times d}))$, hence
\begin{equation}\label{eq:112}	
\big\|[\OMEGA_\Fe']^{\bm\xi_{\EPS k}}(\FF_{{\rm e}\,\EPS k},\theta_{\EPS k})\nabla\FF_{{\rm e}\,\EPS k}\big\|_{L^\infty(I;L^{r(\alpha+1)/((r+1)\alpha+1)}(\Omega))}\le C.
\end{equation}
Finally, 
$[\OMEGA_\XX']^{\bm\xi_{\EPS k}}(\FF_{{\rm e}\,\EPS k},\theta_{\EPS k})$ is bounded in
$L^\infty(I;L^{1+1/\alpha}(\varOmega;\R^d))$ due to
\eqref{Euler-quasistatic-est2-theta+} and \eq{Euler-ass-primitive-c+} whereas $\nabla\bm\xi_{\EPS k}$
is bounded in 
$L^\infty(I{\times}\varOmega;\R^{d\times d})$, as previously shown in 
\eqref{bounds0}. Accordingly, 
\begin{equation}\label{eq:113}
\big\|[\OMEGA_\XX']^{\bm\xi_{\EPS k}}(\FF_{{\rm e}\,\EPS k},\theta_{\EPS k})\nabla\bm\xi_{\EPS k}	\big\|_{L^\infty(I;L^{1+1/\alpha}(\Omega))}\le C.	
\end{equation}
It follows from \eqref{eq:grad}, \eqref{eq:111}, \eqref{eq:112} and \eqref{eq:113}, that $\|\nabla w\|_{L^2(I;L^{
\sigma}(\varOmega))}\le C$ with $\sigma=\min\big(\frac{2(\alpha+1)}{3\alpha-1},\frac{r(\alpha+1)}{(r+1)\alpha+1},1{+}\frac1\alpha\big)$. Hence, using also
\eqref{eq:w}, we arrive at
\begin{align}
\|\W_{\EPS k}\|_{L^\infty(I;L^{1+1/\alpha}(\varOmega))\,\cap\,L^2(I;W^{1,
\sigma}(\varOmega))}^{}\le C\,.%
\label{Euler-weak-sln-w}
\end{align}
\end{subequations}
Note that $\sigma>1$ if $r>\alpha+1$. %

\medskip\noindent{\it Step 5: Limit passage for $k\to\infty$}.
Using the Banach selection principle, we can extract some subsequence of
$(\vv_{\EPS k},\bm\xi_{\EPS k},\FF_{{\rm p}\,\EPS k},\LL_{{\rm p}\,\EPS k},\theta_{\EPS k},w_{\EPS k},\rho_{\EPS k})$ and its limit
$(\vv_\EPS,\bm\xi_\EPS,\FF_{\rm p\,\EPS},\LL_{\rm p\,\EPS},\theta_\EPS,w_\EPS,\rho_\EPS):
I\to W^{2,p}(\varOmega;\R^d)\times W^{2,r}(\varOmega;\R^{d})\times W^{1,r}(\varOmega;\R^{d\times d}_{\rm dev})\times W^{1,q}(\varOmega;\R^{d\times d}_{\rm dev})\times H^1(\varOmega)^2\times W^{1,r}(\varOmega)$
such that %
\begin{subequations}\label{Euler-weak-sln}
\begin{align}
&\!\!\vv_{\EPS k}\to\vv_\EPS&&\text{weakly* in $L^\infty(I;L^2(\varOmega;\R^d))\cap\
L^p(I;W^{2,p}(\varOmega))$}\,,
\\\label{Euler-weak-sln-v}
&\!\!\bm\xi_{\EPS k}\to\bm\xi_\EPS&&\text{weakly* in $\
L^\infty(I;W^{2,r}(\varOmega;\R^d))\cap
W^{1,p}(I;W^{1,r}(\varOmega;\R^d))$,}\!\!&&
\\\label{Euler-weak-sln-v2}
&\!\!\FF_{{\rm p}\,\EPS k}\to\FF_{\rm p\,\EPS}
\!\!\!&&\text{weakly* in $\ L^\infty(I;W^{1,r}(\varOmega;\R^{d\times d}))\,\cap\,
W^{1,\min(p,q)}(I;L^p(\varOmega;\R^{d\times d}))$,}\!\!
\\\label{Euler-weak-sln-v3}
&\!\!\LL_{{\rm p}\,\EPS k}\to\LL_{{\rm p}\,\EPS}&&\text{weakly* in $\
L^{\infty}(I;W^{1,q}(\varOmega;\R^{d\times d}_{\rm dev}))$,}\!\!&&
\\
&\!\!\W_{\EPS k}\to\W_\EPS&&
\text{weakly* in $\ L^\infty(I;L^{1+1/\alpha}(\varOmega))\,\cap\,L^2(I;W^{1,\sigma
}(\varOmega))$}\,,
\\
&\!\!\theta_{\EPS k}\to\ \theta_\EPS&&
\text{weakly* in $\ L^\infty(I;L^{1+\alpha}(\varOmega))\,\cap\,L^2(I;H^1(\varOmega))$}\,,
\\
  &\!\!\rho_{\EPS k}\to\ \rho_\EPS&&
\text{weakly* in $\ L^\infty(I;W^{1,r}(\varOmega))$}.
\end{align}
\end{subequations}
Relying on the assumption $r>d$, and on the estimates \eqref{DTFxi} and \eqref{eq:12}, by the Aubin-Lions lemma we also have that
\begin{subequations}\label{rho-conv+}
\begin{align}\label{rho-conv1}
&{\bm\xi}_{\EPS k}\to{\bm\xi}_\EPS\ &&\text{ strongly in }\ C(I{\times}\barvarOmega;\R^{d}),&&&&&&\\
&{\nabla\bm\xi}_{\EPS k}\to\nabla{\bm\xi}_\EPS\!\!\!
&&\text{ strongly in }\ C(I{\times}\barvarOmega;\R^{d\times d}),\label{rho-conv2}&&\\ 
&\FF_{{\rm p}\,\EPS k}\to\FF_{{\rm p}\,\EPS}\!\!\!&&\text{ strongly in
             $C(I{\times}\barvarOmega;\R^{d\times d})$,}&&\label{rho-conv3}
                                                                                             \\ 
&\rho_{\EPS k}\to\rho_{\EPS}&&\text{ strongly in
$C(I{\times}\barvarOmega)$.}&&\label{rho-conv4}
\end{align}
\end{subequations}
It is shown (see \cite{Roub22QHLS,Roub22TVSE}) that the evolution-and-transport equations
\eq{transport-equations+} and \eqref{transport-rho} with the corresponding initial
conditions define (weak,weak${}^*$)-continuous mappings $\vv_{\EPS k}\mapsto \bm\xiek$,
$\vv_{\EPS k}\mapsto \FF_{{\rm p}\,\EPS k}$, $\vv_{\EPS k}\mapsto \rho_{\EPS k}$ in the corresponding
functions spaces. Thus the weak convergences $(\ref{rho-conv+}{\rm b,\!c})$ already allows
for the limit passage in the evolution equations \eq{transport-equations+} and
\eqref{transport-rho}
We also have the strong convergence of the density
\begin{equation}\label{conv-rho}
	\rhoRxiepsk{\det}(\nabla\bm\xi_{\EPS k})\to 
	\rhoRxieps{\det}(\nabla\bm\xi_{\EPS})\quad\text{strongly in }C(I{\times}\barvarOmega)\ \ \text{ with }\ \rho_\EPS=\rhoRxieps{\det}(\nabla\bm\xi_{\EPS})\,.
      \end{equation}
We now derive bounds on $\pdt{}w_{\EPS k}$, which willl allow us to derive the strong convergence of $w_{\EPS k}$ through the (generalized) Aubin-Lions lemma, and hence strong convergence of $\theta_{\EPS k}$.
    
      By comparison in \eq{Euler3-weak-Galerkin+} 
we obtain a bound on $\pdt{}\W_{\EPS k}$ in seminorms $|\cdot|_l$ on $L^2(I;H^1(\varOmega)^*)$
arising from this Galerkin approximation:
$$
\big|f\big|_l^{}:=\sup\limits_{\stackrel{{\scriptstyle{\widetilde\theta(t)\in Z_l\ \text{for }t\in I}}}{{\scriptstyle{\|\widetilde\theta\|_{L^2(I;H^1(\varOmega))}^{}\le1}}}}\int_0^T\!\!\!\int_\varOmega f\widetilde\theta\,\d\xx\d t\,.
$$
More specifically, for any $k\ge l$, we can estimate
\begin{align}\nonumber
&\Big|\pdt{\W_{\EPS k}}\Big|_l^{}=
\sup\limits_{\stackrel{{\scriptstyle{\widetilde\theta(t)\in Z_l\ \text{for }t\in I}}}{{\scriptstyle{\|\widetilde\theta\|_{L^2(I;H^1(\varOmega))}^{}\le1}}}}
\!\int_0^T\!\!\!\int_\varGamma\!
\frac {h(\theta_{\EPS k})}{1+\EPS |h(\theta_{\EPS k})|}
\widetilde\theta\,\d S\d t
-\!\int_0^T\!\!\!\int_\varOmega\bigg(
\kappa^{\bm\xi_{\EPS k}}(\FF_{{\rm e}\,\EPS k},\theta_{\EPS k})\nabla\theta_{\EPS k}\big)
{\cdot}\nabla\widetilde\theta
\\[-.2em]&\hspace{7em}\nonumber
-\Big(\frac{\nu_0|\ee(\vv_{\EPS k})|^p+\nu_1|\nabla\EE(\vv_{\EPS k})|^p+\partial_{\Lp}\!\zeta(\theta_{\EPS k},\LL_{{\rm p}\,\EPS k}){:}\LL_{{\rm p}\,\EPS k}+\nu_2|\nabla\LL_{{\rm p}\,\EPS k}|^q\!\!}{1{+}\EPS|\EE(\vv_{\EPS k})|^q{+}\EPS|\nabla\EE(\vv)|^p+\EPS\partial_{\Lp}\!\zeta(\theta_{\EPS k},\FF_{{\rm p}\,\EPS k}){:}\LL_{{\rm p}\,\EPS k}+\EPS|\nabla\LL_{{\rm p}\,\EPS k}|^q}
\\&\hspace{7em}%
+\frac{\pi_\LAM(\Fe_{\EPS k})[\COUPLING'_{\Fe}]^{\bm\xi_{\EPS k}}(\Fe_{\EPS k},\theta_{\EPS k})\FF_{{\rm e},\EPS k}^\top
}{\det(\FF_{{\rm e}\,\EPS k})(1{+}\EPS|\theta_{\EPS k}|^{\alpha%
})}{:}\ee(\vv_{\EPS k})
\Big)\widetilde\theta\,\bigg)\,\d\xx\d t\le C_\varepsilon
\label{Euler3-weak-Galerkin++}
\end{align}
with some $C_\varepsilon$ that depends on the regularization parameter $\EPS$ but is independent on $l\in N$. 
Thus, by a generalized Aubin-Lions theorem
\cite[Ch.8]{Roub13NPDE}, we obtain %
\begin{subequations}\label{w-conv}
\begin{align}
&\label{w-conv+}
\W_{\EPS k}\to \W_\EPS\hspace*{-0em}&&
\hspace*{-3em}\text{strongly in $L^s(I{\times}\varOmega)$ for $1\le s<2+4/d$}.
\intertext{Since $\OMEGA(\XX,\Fe,\cdot)$ is increasing for fixed $\XX$ and $\Fe$, we can
write $\theta_{\EPS k}=[\OMEGA^{\bm\xi_{\EPS k}}(\Fe_{\EPS k},\cdot)]^{-1}(\W_{\EPS k})$. 
Thanks to the continuity of
$(\XX,\FF,\W)\mapsto[\OMEGA(\XX,\FF,\cdot)]^{-1}(\W):\Omega\times\R^{d\times d}\times\R\to\R$
and the at most linear growth with respect to $\W$ uniformly
 with respect to $\XX$ from $\Omega$ and $\FF$ from any compact $K\subset{\rm GL}^+(d)$, we have also
 }
&\label{z-conv+}
\theta_{\EPS k}\to \theta_\EPS=[\OMEGA^{\bm\xi_{\EPS k}}(\Fe_\EPS,\cdot)]^{-1}(\W_\EPS)
\hspace*{-0em}&&\hspace*{-1em}\text{strongly in $L^s(I{\times}\varOmega)$
for $1\le s<2+4/d$}\,;
\intertext{%
actually, the bound $2{+}4/d$ in \eq{w-conv+} and \eq{z-conv+}
valid for $\alpha\ge1$ comes
from an embedding and interpolation of \eq{Euler-quasistatic-est2-theta+} and
is actually not optimal if $\alpha\ne1$ but sufficient for our purposes.
Note that we do not have any direct information about
$\pdt{}\theta_{\EPS k}$
so that we could not use the Aubin-Lions arguments straight for
$\{ \theta_{\EPS k}\}_{k\in\N}$. Thus, by the continuity of the corresponding
Nemytski\u{\i} (or here simply superposition) mappings, also the 
conservative part of the regularized Cauchy stress
as well as the heat part of the internal energy,  namely
}
&
\TT^{\bm\xi_{\EPS k}}_{\LAM,\EPS}(\Fe_{\EPS k},\theta_{\EPS k})\to \TT^{\bm\xi_{\EPS}}_{\LAM,\EPS}(\Fe_\EPS,\theta_\EPS)
\hspace*{-0em}&&\hspace*{0em}\text{strongly in
$L^c(I{\times}\varOmega;\R_{\rm sym}^{d\times d}),\ \ 1\le c<\infty$,}
\label{Euler-T-strong-conv+}
\\&\nonumber
\frac{\pi_\LAM(\Fe_{\EPS k})[\COUPLING'_{\Fe}]^{\bm\xi_{\EPS k}}(\Fe_{\EPS k},\theta_{\EPS k})\Fe_{\EPS k}^\top
}{\det(\Fe_{\EPS k})(1{+}\EPS|\theta_{\EPS k}|^{\beta%
})}
\\&\hspace{2em}
\to \frac{\pi_\LAM(\Fe_\EPS)[\COUPLING'_{\Fe}]^{\bm\xi_{\EPS}}(\Fe_\EPS,\theta_\EPS)\Fe_\EPS^\top
}{\det(\Fe_\EPS)(1{+}\EPS|\theta_\EPS|^{\beta%
})}
\hspace*{0em}&&\hspace*{0em}\text{strongly in
$L^c(I{\times}\varOmega;\R_{\rm sym}^{d\times d}),\ \ 1\le c<\infty$},
\\[-.0em]&\OMEGA^{\bm\xi_{\EPS k}}(\Fe_{\EPS k},\theta_{\EPS k})\to
\OMEGA^{\bm\xi_{\EPS}}(\Fe_\EPS,\theta_\EPS)\hspace*{-0em}&&\hspace*{0em}\text{strongly in $L^c(I{\times}\varOmega),\ \ 1\le c<2{+}4/d$,}\label{Euler-weak+w+}
\end{align}
\end{subequations}
where $\Fe_\EPS=(\nabla\bm\xi_{\EPS})^{-1}\FF_{{\rm p}\,\EPS}^{-1}$.

We now focus our attention to the density of linear momentum $\rho_{\EPS k}\vv_{\EPS k}$.
We notice that
$\nabla(\rho_{\EPS k}\vv_{\EPS k})=\nabla\rho_{\EPS k}{\otimes}\vv_{\EPS k}
+\rho_{\EPS k}\nabla\vv_{\EPS k}$
      is bounded in $L^p(I;L^r(\varOmega;\R^{d}))$ due to the already obtained bounds  \eqref{bound0c} and \eqref{eq:11}. Moreover, by \eqref{rho-conv4} $\rho_{\EPS k}$ converges strongly to $\rho_\EPS$, while by (\ref{Euler-weak-sln}a) $\vv_{\EPS k}$ converges weakly. Hence we have
$\rho_{\EPS k}\vv_{\EPS k}\to\rho_\EPS\vv_\EPS%
\text{ weakly in }L^p(I;W^{1,r}(\varOmega;\R^{d}))$.
We can actually prove that the above convergence is strong {but in a weaker topology}.
Indeed, by comparison in the Galerkin approximation of the balance of linear momentum
we also obtain information on $\pdt{}(\rho_{\EPS k}\vv_{\EPS k})$. More precisely, by
(\ref{Euler-weak-Galerkin+}a),
      \begin{align}
        \nonumber
        &\int_0^T\!\!\int_{\varOmega} \frac{\partial}{\partial t}\left(\varrho_{\varepsilon k} \vv_{\varepsilon k}\right) {\cdot}\widetilde{\vv}\,\d\xx\d t
  =\int_0^T\!\!\int_\varOmega\Big(\rho_{\EPS k}\gg{\cdot}\widetilde\vv
\\\nonumber&\qquad
        +\big(\rho_{\EPS k}\vv_{\EPS k}{\otimes}\vv_{\EPS k}-\TT_{\lambda,\EPS}^\xiek(\FF_{{\rm e}\,\EPS k},\theta_{\EPS k})-\nu_0|\ee(\vv_{\EPS k})|^{p-2}\ee(\vv_{\EPS k})\big){:}\ee(\widetilde\vv)
          \\
 &\qquad\qquad-\nu_1|\nabla\ee(\vv_{\EPS k})|^{p-2}\nabla\ee(\vv_{\EPS k}){\vdots}\nabla\ee(\widetilde\vv)\Big)\dx\dt\le C_\EPS\|\widetilde\vv\|_{L^p(I;W^{2,p}(\varOmega;\R^d))}^{}\,.
        \label{ddt}
      \end{align}
Thus, $\pdt{}(\rho_{\EPS k}\vv_{\EPS k})$ is bounded in the dual space
$L^{p'}(I;W^{2,p}(\varOmega;\R^d)^*)$ with respect to the family of seminorms induced by the
Galerkin discretization by $\{W_l\}_{%
{l\le k}}$ {independently of $k$}.
 By a generalization of the Aubin-Lions compactness theorem \cite[Ch.8]{Roub13NPDE},
 we have%
      \begin{equation}\label{strong-momentum}
        \rho_{\EPS k}\vv_{\EPS k}\to\rho_{\EPS}\vv_{\EPS}\quad\text{strongly in }L^p(I;L^s(\varOmega;\R^d))\quad\text{for all }2\le s<\infty.
      \end{equation}
Since $\vv_{\EPS k}=\rho_{\EPS k}\vv_{\EPS k}/\rho_{\EPS k}$, from the strong convergence of
$\rho_{\EPS k}$ in \eqref{rho-conv4} we obtain
\begin{equation}\label{strong-v}
      	\vv_{\EPS k}\to\vv_{\EPS}\quad\text{strongly in }L^p(I;L^s(\varOmega;\R^d))\quad\text{for all }2\le s<\infty.
      \end{equation}
We also notice that  $\varrho_{\EPS k}(T)$ and $\vv_{\EPS k}(T)$ are well defined, since $\rho_{\EPS k}$ and $\vv_{\EPS k}$ are weakly continuous. Moreover, they are both bounded
by \eqref{eq:11} and \eqref{pippo:1} in $W^{1,r}(\varOmega)$ and $L^2(\varOmega;\R^d)$,
respectively. Thus, their product is bounded in the space
$L^2(\varOmega;\R^d)$ (recall that $r>d$) and therefore it converges to a limit. 
To identify the limit, take $\widetilde\vv\in V_l$ for any $l\in\mathbb N$. Then, using the bound \eqref{ddt}, we have also that the time derivative $\pdt{}(\rho_{\EPS k}\vv_{\EPS k})$ converges to some limit, which by \eqref{strong-momentum} coincides with the distributional time derivative of $\rho_\EPS\vv_\EPS$. Moreover, denoting $\rho_0=\rhoRxizero$, it holds
      \begin{align}
        \int_\varOmega \rho_{\EPS k}(T)\vv_{\EPS k}(T)\widetilde\vv\dx&=\int_\varOmega\rho_0\vv_0\widetilde\vv\dx+\int_0^T\!\! \int_\varOmega\pdt{(\rho_{\EPS k}\vv_{\EPS k})}\widetilde\vv\dx\dt
        \nonumber
        \\
        &\!\!\!\!\stackrel{k\to\infty}\to\!\!\int_\varOmega\rho_0\vv_0\widetilde\vv\dx+\int_0^T\!\! \Big\langle\pdt{(\rho_{\EPS}\vv_{\EPS})},\widetilde\vv\Big\rangle\dt=\int_\varOmega\rho_\EPS(T)\vv_{\EPS}(T)\widetilde\vv\dx.
        \end{align}
        Thus we conclude that
        $\rho_{\EPS k}(T)\vv_{\EPS k}(T)\to \rho_\EPS(T)\vv_\EPS(T)$ %
        {  weakly in }$L^2(\varOmega;\R^d)$.
Such convergence allows us to perform the following calculation
\begin{align}\nonumber
\int_0^T & \int_{\varOmega}\left(\frac{\partial}{\partial t}\left(\rho_{\varepsilon k} \vv_{\varepsilon k}\right)+\operatorname{div}\left(\rho_{\varepsilon k} \vv_{\varepsilon k} {\otimes} \vv_{\varepsilon k}\right)\right){\cdot}\widetilde{\vv}\,\d\xx\d t \\
\quad &\nonumber
=\int_{\varOmega} \rho_{\varepsilon k}(T) \vv_{\varepsilon k}(T){\cdot}\widetilde{\vv}(T)-\rho_0 \vv_0{\cdot}\widetilde{\vv}(0)\,\,\d\xx
-\int_0^T\!\!\!\int_{\varOmega} \rho_{\varepsilon k} \vv_{\varepsilon k}{\cdot}\frac{\partial \widetilde{\vv}}{\partial t}+\left(\rho_{\varepsilon k} \vv_{\varepsilon k} {\otimes} \vv_{\varepsilon k}\right){:}\nabla \widetilde{\vv}\,\d\xx\d t
\\\nonumber&
\!\!\stackrel{k \rightarrow \infty}{\longrightarrow} \int_{\varOmega} \rho_{\varepsilon}(T) \vv_{\varepsilon}(T){\cdot}\widetilde{\vv}(T)-\rho_0 \vv_0{\cdot}\widetilde{\vv}(0)\,\d\xx
-\int_0^T\!\!\!\int_{\varOmega} \rho_{\varepsilon} \vv_{\varepsilon}{\cdot}\frac{\partial \widetilde{\vv}}{\partial t}+\left(\rho_{\varepsilon} \vv_{\varepsilon} {\otimes} \vv_{\varepsilon}\right){:}\nabla \widetilde{\vv}\,\d\xx\d t \\
&=\int_0^T \int_{\varOmega}\left(\frac{\partial}{\partial t}\left(\rho_{\varepsilon} \vv_{\varepsilon}\right)+\operatorname{div}\left(\rho_{\varepsilon} \vv_{\varepsilon} {\otimes} \vv_{\varepsilon}\right)\right){\cdot}\widetilde{\vv}\,\,\d\xx\d t\,.
\label{conv2}
\end{align}

We next note that
\begin{align}
\nonumber 
&\int_\varOmega \Big(\pdt{}(\rho_{\EPS k}\vv_{\EPS k})+{\rm div}(\rho_{\EPS k}\vv_{\EPS k}\otimes\vv_{\EPS k})\Big){\cdot}\vv_{\EPS k}\dx\\
&\quad =\int_\varOmega \pdt{\rho_{\EPS k}}\vv_{\EPS k}{\cdot}\vv_{\EPS k}+\vv_{\EPS k}{\cdot}\vv_{\EPS k}{\rm div}(\rho_{\EPS k}\vv_{\EPS k})+\rho_{\EPS k}(\vv_{\EPS k}{\cdot}\nabla\vv_{\EPS k})\vv_{\EPS k}\dx\nonumber\\
&\quad =\!\int_\varOmega\!\rho_{\EPS k}\DT\vv_{\EPS k}{\cdot}\vv_{\EPS k}\dx
=\!\int_\varOmega\!\rho_{\EPS k}\Big({{\frac {|\vv_{\EPS k}|^2}2}}\Big)^{\!\!\!\DT{}}\dx\nonumber
=\!\int_\varOmega\!\rho_{\EPS k}\pdt{}\Big(\frac{|\vv_{\EPS k}|^2}{2}\Big)+\rho_{\EPS k}\vv_{\EPS k}{\cdot}\nabla\Big(\frac{|\vv_{\EPS k}|^2}{2}\Big)\dx\nonumber
\\&\quad
=\int_\varOmega \rho_{\EPS k}\pdt{}\Big(\frac{|\vv_{\EPS k}|^2}{2}\Big)-\underbrace{{\rm div}(\rho_{\EPS k}\vv_{\EPS k})}_{=-\pl\rho_{\EPS k}/\pl t}\frac{|\vv_{\EPS k}|^2}{2}\dx+\int_\varGamma\frac{|\vv_{\EPS k}|^2}{2}\underbrace{(\rho_{\EPS k}\vv_{\EPS k}{\cdot}\nn)}_{=0}{\rm d}S\nonumber\\
&\quad=\int_\varOmega \pdt{}\Big(\rho_{\EPS k}\frac{|\vv_{\EPS k}|^2}{2}\Big)\dx=\frac{\rm d}{{\rm d}t}\int_\varOmega\rho_{\EPS k}\frac{|\vv_{\EPS k}|^2}{2}\dx.\label{eq:energy1}
\end{align}
This implies 
\begin{align}
\int_{\varOmega}& \frac{\rho_{\varepsilon k}(T)}{2}\left|\vv_{\varepsilon k}(T){-}\vv_{\varepsilon}(T)\right|^2\,\d\xx =\int_0^T \int_{\varOmega}\left(\frac{\partial}{\partial t}\left(\rho_{\varepsilon k} \vv_{\varepsilon k}\right)+\operatorname{div}\left(\rho_{\varepsilon k} \vv_{\varepsilon k} {\otimes} \vv_{\varepsilon k}\right)\right){\cdot}\vv_{\varepsilon k}\,\d\xx\d t
\nonumber\\[-.4em]
&\qquad\qquad\qquad\qquad\qquad
+\int_{\varOmega} \frac{\rho_0}{2}\left|\vv_0\right|^2\!-\rho_{\varepsilon k}(T) \vv_{\varepsilon k}(T){\cdot}\vv_{\varepsilon}(T)+\frac{\rho_{\varepsilon k}(T)}{2}\left|\vv_{\varepsilon}(T)\right|^2\,\d\xx.
\label{identity}
\end{align}    
We now use the Galerkin approximation of the regularized momentum equation
\eq{Euler1-weak-Galerkin+} tested by $\widetilde\vv=\vv_{\EPS k}-\widetilde\vv_k$
with $\widetilde\vv_k:I\to V_k$ an approximation of $\vv_\EPS$ in the sense
that $\vv_{\EPS k}\to\vv_\EPS$ strongly in $L^\infty(I;L^2(\varOmega;\R^d))$, 
$\EE(\widetilde\vv_k)\to\EE(\vv_\EPS)$ strongly in
$L^p(I{\times}\varOmega;\R^{d\times d})$, and
$\Nabla\EE(\widetilde\vv_k)\to\Nabla\EE(\vv_\EPS)$ 
strongly in $L^p(I{\times}\varOmega;\R^{d\times d\times d})$ for $k\to\infty$.
We can estimate%
\begin{align}\nonumber
 &\frac1{2C_\EPS}\|\vv_{\EPS k}(T){-}\vv_\EPS(T)\|^2_{L^2(\varOmega;\R^d)}\!\!
 +\nu_0c_p\|\EE(\vv_{\EPS k}{-}\vv_\EPS)\|_{L^p(I{\times}\varOmega;\R^{d\times d})}^p\!\!
+\nu_1 c_{p}\|\nabla\EE(\vv_{\EPS k}{-}\vv_\EPS)\|_{L^p(I{\times}\varOmega;\R^{d\times d\times d})}^p
\\&\nonumber
\le\int_\varOmega\frac{\rho_{\EPS k}}2|\vv_{\EPS k}(T){-}\vv_\EPS(T)|^2\dx
+\int_0^T\!\!\!\int_\varOmega\!\bigg(
\nu_0\big(|\ee(\vv_{\EPS k})|^{p-2}\ee(\vv_{\EPS k}){-}|\EE(\vv_\EPS)|^{p-2}\EE(\vv_\EPS))\big)
{:}\EE(\vv_{\EPS k}{-}\vv_\EPS)
 \\[-.6em]&\hspace*{4em}\nonumber
 +\nu_1\big(|\nabla\EE(\vv_{\EPS k})|^{p-2}\nabla\EE(\vv_{\EPS k})
{-}|\nabla\EE(\vv_\EPS)|^{p-2}\nabla\EE(\vv_\EPS)\big)\Vdots
  \nabla\EE(\vv_{\EPS k}{-}\vv_\EPS)\bigg)\,\d\xx\d t
 \\[-.5em]&\nonumber
=\int_\varOmega \frac{\rho_{\EPS k}}2|\vv_{\EPS k}(T)-\vv_\EPS(T)|^2\dx
\\
\nonumber
&\quad+\int_0^T\!\!\!\int_\varOmega\!\bigg(
\nu_1|\nabla\EE(\vv_{\EPS k})|^{p-2}\nabla\EE(\vv_{\EPS k})\Vdots
  \nabla\EE(\vv_{\EPS k}{-}\widetilde\vv_k)
-\nu_1|\nabla\EE(\vv_\EPS)|^{p-2}\nabla\EE(\vv_\EPS)\big)\Vdots
  \nabla\EE(\vv_{\EPS k}{-}\vv_\EPS)
  \\[-.6em]&\hspace*{3em}\nonumber
 +\nu_0|\ee(\vv_{\EPS k})|^{p-2}\ee(\vv_{\EPS k}){:}\ee(\vv_{\EPS k}-\widetilde\vv_{k})
 -\nu_0|\EE(\vv_\EPS)|^{p-2}\EE(\vv_\EPS){:}\EE(\vv_{\EPS k}{-}\vv_\EPS)
\bigg)\,\d\xx\d t+\mathscr O_k
 \\[-.5em]&=\nonumber
 \int_\varOmega\frac{\rho_{\EPS k}}2|\vv_{\EPS k}(T)-\vv_\EPS(T)|^2\dx
 +\int_0^T\!\!\!\int_\varOmega\bigg(\frac{\rhoRxiepsk\GRAVITY}{\det_{\LAM}(\FF_{\EPS k})\!\!}{\cdot}(\vv_{\EPS k}{-}\widetilde\vv_k)
 -\TT^{\bm\xi_{\EPS k}}_{\LAM,\EPS}(\FF_{{\rm e}\,\EPS k},\theta_{\EPS k}){:}\ee(\vv_{\EPS k}{-}\widetilde\vv_k)
 \\&\hspace{2em}
 -\Big(\pdt{(\rho_{\EPS k}\vv_{\EPS k})}+{\rm div}(\rho_{\EPS k}(\vv_{\EPS k}{\otimes}\vv_{\EPS k})\Big)\cdot(\vv_{\EPS k}-\widetilde\vv_k)
 \nonumber
 \\[-.4em]&\nonumber\hspace{2em}
 -\nu_0|\EE(\vv_\EPS)|^{p-2}\EE(\vv_\EPS)
{:}\EE(\vv_{\EPS k}{-}\vv_\EPS)
-\nu_1\big(|\nabla\EE(\vv_\EPS)|^{p-2}\nabla\EE(\vv_\EPS)\big)\Vdots
 \nabla\EE(\vv_{\EPS k}{-}\vv_\EPS)\bigg)
 \,\d\xx\d t+\mathscr{O}_k
\\
\nonumber &
\!\!\stackrel{\eqref{identity}}=
\int_0^T\!\!\!\int_\varOmega\Big(\frac{\rhoRxiepsk\GRAVITY}{\det_{\LAM}(\FF_{\EPS k})\!\!}{\cdot}(\vv_{\EPS k}{-}\widetilde\vv_k)
 -\TT^{\bm\xi_{\EPS k}}_{\LAM,\EPS}(\FF_{{\rm e}\,\EPS k},\theta_{\EPS k}){:}\ee(\vv_{\EPS k}{-}\widetilde\vv_k)
 \\&\hspace{4em}+\Big(\pdt{(\rho_{\EPS k}\vv_{\EPS k})}+{\rm div}(\rho_{\EPS k}(\vv_{\EPS k}{\otimes}\vv_{\EPS k})\Big)\cdot\widetilde\vv_k
 \nonumber
 &\quad 
 \\[.1em]&\nonumber\hspace{4em}
 -\nu_0|\EE(\vv_\EPS)|^{p-2}\EE(\vv_\EPS)
{:}\EE(\vv_{\EPS k}{-}\vv_\EPS)
-\nu_1\big(|\nabla\EE(\vv_\EPS)|^{p-2}\nabla\EE(\vv_\EPS)\big)\Vdots
 \nabla\EE(\vv_{\EPS k}{-}\vv_\EPS)\Big)
 \,\d\xx\d t\nonumber
 \\
 &\hspace{2em}+\int_\varOmega\frac{\rho_0}2|\vv_0|^2-\rho_{\EPS k}(T)\vv_{\EPS k}(T)\cdot\vv_\EPS(T)+\frac{\rho_{\EPS k}(T)}2|\vv_\EPS(T)|^2\dx
 +\mathscr{O}_k\nonumber
 \\
\nonumber &
=
\int_0^T\!\!\!\int_\varOmega\bigg(\frac{\rhoRxiepsk\GRAVITY}{\det_{\LAM}(\FF_{\EPS k})\!\!}{\cdot}(\vv_{\EPS k}{-}\widetilde\vv_k)
       -\TT^{\bm\xi_{\EPS k}}_{\LAM,\EPS}(\FF_{{\rm e}\,\EPS k},\theta_{\EPS k}){:}\ee(\vv_{\EPS k}{-}\widetilde\vv_k)
 \\&\hspace{4em}+\Big(\pdt{(\rho_{\EPS k}\vv_{\EPS k})}
 +{\rm div}(\rho_{\EPS k}(\vv_{\EPS k}{\otimes}\vv_{\EPS k})\Big){\cdot}\widetilde\vv_k
 \nonumber
 \\[-.4em]&\nonumber\hspace{4em}
 -\nu_0|\EE(\vv_\EPS)|^{p-2}\EE(\vv_\EPS)
{:}\EE(\vv_{\EPS k}{-}\vv_\EPS)
-\nu_1\big(|\nabla\EE(\vv_\EPS)|^{p-2}\nabla\EE(\vv_\EPS)\big)\Vdots
 \nabla\EE(\vv_{\EPS k}{-}\vv_\EPS)\bigg)
 \,\d\xx\d t
   \\[-.4em]
 &\hspace{2em}+\int_\varOmega\frac{\rho_0}2|\vv_0|^2-\rho_{\EPS k}(T)\vv_{\EPS k}(T)\cdot\widetilde\vv_k(T)+\frac{\rho_{\EPS k}(T)}2|\widetilde\vv_k(T)|^2\dx
   +\mathscr{O}_k+\mathscr{Q}_k
   \label{strong-hyper+}\end{align}
with $C_\EPS>0$ from \eq{eq:11b} %
and with some $c_{p}>0$ related to the inequality
$c_{p}|G-\widetilde G|^p\le(|G|^{p-2}G-|\widetilde G|^{p-2}\widetilde G)\Vdots(G-\widetilde G)$ holding for $p\ge2$. The remaining terms $\mathscr Q_k$ and 
$\mathscr{O}_k$  in \eq{strong-hyper+} are, respectively,
\begin{align}\nonumber
&\mathscr Q_k=\int_\varOmega\rho_{\EPS k}(T)\vv_{\EPS k}(T)\cdot(\widetilde\vv_k(T)-\vv_{\EPS}(T))+\frac{\rho_{\EPS k}}{2}(|\vv_\EPS(T)|^2-|\widetilde\vv_k(T)|^2)\ \ \text{ and}
\\\nonumber
&\mathscr{O}_k=%
\int_0^T\!\!\!\int_\varOmega\!
\nu_0|\EE(\vv_{\EPS k})|^{p-2}\EE(\vv_{\EPS k}){:}\EE(\widetilde\vv_k{-}\vv_\EPS)
 +\nu_1|\nabla\EE(\vv_{\EPS k})|^{p-2}\nabla\EE(\vv_{\EPS k})\Vdots
  \nabla\EE(\widetilde\vv_k{-}\vv_\EPS)\,\d\xx\d t\,;
\end{align}
they both converge to zero due to the strong approximation properties of the
approximation $\widetilde\vv_k$ of $\vv_\EPS$. 
Moreover, thanks to \eqref{conv2}, and using the same calculus as \eqref{eq:energy1}, we have
\begin{align}
&\int_0^T\!\! \int_{\varOmega}\Big(\frac{\partial}{\partial t}\left(\varrho_{\varepsilon k} \vv_{\varepsilon k}\right)+\operatorname{div}(\varrho_{\varepsilon k} \vv_{\varepsilon k} {\otimes} \vv_{\varepsilon k})\Big){\cdot}\widetilde{\vv}_k\,\d\xx\d t	
\nonumber\\&
\stackrel{k\to\infty}\longrightarrow
\int_0^T\!\! \int_{\varOmega}\Big(\frac{\partial}{\partial t}\left(\varrho_{\varepsilon} \vv_{\varepsilon}\right)+\operatorname{div}(\varrho_{\varepsilon} \vv_{\varepsilon} {\otimes} \vv_{\varepsilon})\Big){\cdot}{\vv}_\EPS\,\d\xx\d t
\nonumber
=\int_\varOmega \frac{\rho_\EPS}2|\vv_\EPS(T)|^2-\frac {\rho_0}2|\vv_0|^2\dx\,.
\end{align}
Furthermore
\begin{align}
&\int_{\varOmega}\!\frac{\rho_0}{2}\left|\vv_0\right|^2\!-\rho_{\varepsilon k}(T) \vv_{\varepsilon k}(T){\cdot}\widetilde{\vv}_k(T)+\frac{\rho_{\varepsilon k}(T)}{2}\left|\widetilde{\vv}_k(T)\right|^2\,\d\xx
\nonumber
\stackrel{k\to\infty}\longrightarrow\!\int_\varOmega\!\frac{\rho_0}2|\vv_0|^2\!
-\frac{\rho_\EPS(T)}2|\vv_\EPS(T)|^2\dx. 
\end{align}
Thus, the last term in the chain of inequalities \eqref{strong-hyper+} converges to 0 as $k\to\infty$. Thus we obtain the desired strong convergence
\begin{align}\label{strong-conv+}
&\ee(\vv_{\EPS k})\to\ee(\vv_{\EPS})\ \text{ strongly in 
$\,L^p(I;W^{1,p}(\varOmega;\R_{\rm sym}^{d\times d}))\,$}.
\end{align}
Overall, thanks to the already proved strong convergence \eqref{strong-v}, we
arrive at
$\vv_{\EPS k}\to\vv_\EPS$ 
{ strongly in }$L^p(I;W^{1,p}(\varOmega;\R^d))$.	

We also need to prove the convergence of the plastic dissipation.
{As $\LL_{\rm p \EPS}$ is not a legal test for the Galerkin approximation
\eq{Euler2-weak-Galerkin+} in general, }
we consider $\widetilde\LL_{{\rm p}k}{:I\to W_k}$ so that the sequence
$\{\widetilde\LL_{{\rm p}k}\}_{k\in\N}$ %
converges strongly in $L^\infty(I;W^{1,q}(\varOmega;\R^{d\times d}_{\rm dev}))$ to
$\LL_{\rm p \EPS}$. Using the assumptions \eqref{Euler-ass-zeta} on $\zeta$, 
we can write 
\begin{align}
&{\lambda c_2}\|\LL_{{\rm p}\,\EPS k}-\LL_{{\rm p}\,\EPS}\|^2_{L^2(I\times\varOmega;\R^{d\times d})}+c_q\|\nabla\LL_{{\rm p}\,\EPS k}-\nabla\LL_{{\rm p}\,\EPS}\|^q_{L^q(I\times\varOmega;\R^{d\times d\times d})}\nonumber
\\
&\le
\int_0^T\!\!\!\int_\varOmega\Big(\det(\nabla\bm\xi_{\EPS k})
\big(\partial_{\Lp}\!\zeta(\theta_{\EPS k};\LL_{{\rm p}\,\EPS k})-\partial_{\Lp}\!\zeta(\theta_{\EPS k};\LL_{{\rm p}\,\EPS})\big)
{:}(\LL_{{\rm p}\,\EPS k}-\LL_{{\rm p}\,\EPS})\nonumber\\
&\hskip5.0em+\nu_2(|\nabla\LL_{{\rm p}\,\EPS k}|^{q-2}\nabla\LL_{{\rm p}\,\EPS k}-|\nabla\LL_{{\rm p}\,\EPS}|^{q-2}\nabla\LL_{{\rm p}\,\EPS})\Vdots\nabla(\LL_{{\rm p}\,\EPS k}{-}\LL_{{\rm p}\,\EPS})\Big)\,\d\xx\d t\nonumber\\
&\le
\int_0^T\!\!\!\int_\varOmega \Big(\det(\nabla\bm\xi_{\EPS k})\zeta(\theta_{\EPS k};\LL_{{\rm p}\,\EPS k})-\det(\nabla\bm\xi_{\EPS k})\zeta(\theta_{\EPS k};{\LL}_{{\rm p}\,\EPS})\nonumber
\\
&\hskip5.0em -\det(\nabla\bm\xi_{\EPS k})\partial_{\Lp}\!\zeta(\theta_{\EPS k};\LL_{{\rm p}\,\EPS}){:}(\LL_{{\rm p}\,\EPS k}-\LL_{{\rm p}\,\EPS})\nonumber\\
&\hskip5.0em+\nu_2|\nabla\LL_{{\rm p}\,\EPS k}|^q-\nu_2|\nabla\LL_{{\rm p}\,\EPS}|^q-\nu_2|\nabla\LL_{{\rm p}\,\EPS}|^{q-2}\nabla\LL_{{\rm p}\,\EPS}\Vdots\nabla(\LL_{{\rm p}\,\EPS k}{-}\LL_{{\rm p}\,\EPS})\Big)\,\d\xx\d t \nonumber\\
&=\int_0^T\!\!\!\int_\varOmega \Big(\det(\nabla\bm\xi_{\EPS k})\zeta(\theta_{\EPS k};\LL_{{\rm p}\,\EPS k})-\det(\nabla\bm\xi_{\EPS k})\zeta(\theta_{\EPS k};{\widetilde\LL}_{{\rm p}\,k})+\nu_2|\nabla\LL_{{\rm p}\,\EPS k}|^q-\nu_2|\nabla\widetilde\LL_{{\rm p}\,k}|^q\nonumber\\
&\hskip1em-\det(\nabla\bm\xi_{\EPS k})\partial_{\Lp}\!\zeta(\theta_{\EPS k};\LL_{{\rm p}\,\EPS}){:}(\LL_{{\rm p}\,\EPS k}{-}\LL_{{\rm p}\,\EPS}) 
-\nu_2|\nabla\LL_{{\rm p}\,\EPS}|^{q-2}\nabla\LL_{{\rm p}\,\EPS}\Vdots\nabla(\LL_{{\rm p}\,\EPS k}{-}\LL_{{\rm p}\,\EPS})\Big)\,\d\xx\d t+\mathscr Q_k\nonumber\\
&\le \int_0^T\!\!\!\int_\varOmega\bigg(
\det(\nabla\bm\xi_{\EPS k})\Big({\FF}_{{\rm e}\,\EPS k}^\top[(\pi_\lambda\varphi)'_{\Fe}]^{\bm\xi_{\EPS k}}(\FF_{{\rm e}\,\EPS k})+\pi_\lambda(\FF_{{\rm e}\,\EPS k})\FF^\top_{{\rm e}\,\EPS k}\frac{[\gamma_{\Fe}']^{\bm\xi_{\EPS k}}(\FF_{{\rm e}\,\EPS k},\theta_{\EPS k})}{1+\EPS|\theta_{\EPS k}|^{\beta/q'}}\nonumber\\
&\hskip5em-[\phi'_{\Fp}]^{\bm\xi_{\EPS k}}(\FF_{{\rm p}\,\EPS k})\FF_{{\rm p}\,\EPS k}^\top\Big)
{:}(\LL_{{\rm p}\,\EPS k}-\widetilde\LL_{{\rm p}k})
-\det(\bm\xi_{\EPS k})\partial_{\Lp}\!\zeta(\theta_{\EPS k};\LL_{{\rm p}\,\EPS}){:}(\LL_{{\rm p}\,\EPS k}-\LL_{{\rm p}\,\EPS})\nonumber\\\nonumber
&\hskip5em-\nu_2|\nabla\LL_{{\rm p}\,\EPS}|^{q-2}\nabla\LL_{{\rm p}\,\EPS}\Vdots\nabla(\LL_{{\rm p}\,\EPS k}{-}\LL_{{\rm p}\,\EPS})\bigg)\,\d\xx\d t+\mathscr Q_k\stackrel{k\to\infty}\longrightarrow0\,,
\end{align}
where the last inequality follows from the weak form \eqref{Euler3-weak-Galerkin+} with test function $\LL_{{\rm p}\,\EPS k}-\widetilde\LL_{{\rm p}k}$, and where we have set
$\mathscr Q_k=\int_0^T\!\!\!\int_\varOmega \zeta(\theta_{\EPS k};\widetilde\LL_{{\rm p}\,k})-	\zeta(\theta_{\EPS};\LL_{{\rm p}\,\EPS})+\nu_2|\nabla\widetilde\LL_{{\rm p}\,k}|^q-\nu_2|\nabla\LL_{{\rm p}\,\EPS}|^q\,\d\xx\d t\stackrel{k\to\infty}\longrightarrow0$.
Thus, we obtain 
\begin{subequations}\label{strong-conv-Lp}
\begin{align}\label{strong-conv-Lp+}
&&&\LL_{{\rm p}\,\EPS k}\to\LL_{{\rm p}\,\EPS}&&\text{strongly in }L^2(I{\times}\varOmega;\R^{d\times d}_{\rm dev})\ \ \text{ and}&&&&
\\
&&&\nabla\LL_{{\rm p}\,\EPS k}\to\nabla\LL_{{\rm p}\,\EPS}\!\!\!&&\text{strongly in }	L^q(I{\times}\varOmega;\R^{d\times d\times d})\,.
\end{align}
\end{subequations}
Using {also}
the strong convergence of $\FF_{{\rm e}\,\EPS k}$ as well as the strong convergence of $\nabla\bm\xi_{\EPS k}$, we can pass to the limit in the weak forms (\ref{Euler-weak-Galerkin+}a,b,c) of the Galerkin formulation. We therefore obtain a weak solution of the $\EPS$--regularized system \eqref{Euler-hypoplast-xi-eps}.
{Let us remark that, in view of \eq{Euler-weak-sln-v3},
the convergence \eq{strong-conv-Lp+} is also weak* in
$L^\infty(I{\times}\varOmega;\R^{d\times d}_{\rm dev})$ and the (possibly unlimited)
growth of $\zeta(\theta,\cdot)$ is irrelevant.}

Before going to the next step, we observe that the mechanical energy-dissipation balance \eqref{energetics1} is inherited by the limit $(\vv_\EPS,\xi_\EPS,\FF_{\rm p\,\EPS},\LL_{\rm p\,\EPS},\theta_\EPS)$, namely,
 \begin{align}\nonumber
  &\hspace*{0em}\frac{\d}{\d t}
  \int_\varOmega\frac{\rhoRxieps|\vv_\EPS |^2}{2\,{\det}(\Fe_\EPS)}+
  \frac{\pi_\LAM(\Fe_\EPS)\varphi^{\bm\xi_\EPS}(\Fe_\EPS)\!}{{\det}(\Fe_\EPS)}
  \,\d\xx
\\[-.1em]\nonumber&\hspace{2em}
+\!\int_\varOmega\!{\nu_0|\EE(\vv_\EPS))|^p
+\nu_1|\Nabla\EE(\vv_\EPS)|^p+J\partial_{\Lp}\!\zeta(\theta;\LL_{\rm p\,\EPS}){:}\LL_{\rm p\,\EPS}+\nu_2|\nabla\LL_{\rm p\,\EPS}|^q}\,\d\xx
\\[-.1em]&\hspace{3em}
=\int_\varOmega\frac{\rhoR\GRAVITY{\cdot}\vv_\EPS}{\det_\LAM(\FF_\EPS)\!\!}
-\frac{\pi_\LAM(\Fe_\EPS)\COUPLING_{\Fe}'(\Fe_\EPS,\theta_\EPS)}{(1{+}\EPS|\theta_\EPS|^{\alpha%
}
){\det}(\Fe_\EPS)}{:}(\nabla\vv_\EPS\FF_{{\rm e}\,\EPS}{-}\FF_{{\rm e}\,\EPS}\LL_{{\rm p}\,\EPS k})\,\d\xx.
\label{thermodynamic-Euler-mech-disc++}
\end{align}
We recall that \eqref{energetics1} was obtained by testing the balance of linear momentum by the velocity and the flow rule by the plastic-strain rate. Thus, to justify \eqref{thermodynamic-Euler-mech-disc++}, we must check that these tests are legal. Indeed, we first rewrite the balance of linear momentum \eqref{Euler1=hypoplast-xi-eps} as
\begin{align}
\nonumber%
     &\pdt{}(\rho_\EPS\vv_\EPS)
     =\rho_\EPS\GRAVITY\\
     &\quad+{\rm div}
 \big(\TT^{\bm\xi_\EPS}_{\lambda,\EPS}(\FF_{\rm e\,\EPS},\theta_\EPS)
 +\nu_0|\ee(\vv_\EPS)|^{p-2}\ee(\vv_\EPS)-{\rm div}(\nu_1|\nabla\ee(\vv_\EPS)|^{p-2}\nabla\ee(\vv))
 -\rho_\EPS\vv_\EPS{\otimes}\vv_\EPS\big)\,.%
\nonumber	
\end{align}
In view of the inequality \eqref{ddt} considered in the limit as $k\to\infty$,
we notice that $\pdt{}(\rho_\EPS\vv_\EPS)\in L^{p'}(I;W^{2,p}(\varOmega;\R^d)^*)$
is in duality with $\vv_\EPS\in L^p(I;W^{2,p}(\varOmega;\R^d))$.
Since $\rho_\varepsilon\in L^\infty(I{\times}\Omega)$
due to \eq{eq:11} with $r>d$
and since $\GRAVITY\in L^1(I;L^\infty(\Omega;\R^d))$ by assumption, we have that
$\rho_\EPS\GRAVITY$
is in duality with $\vv_\EPS\in L^\infty(I;L^2(\Omega;\R^d))$, cf.\ \eq{pippo:1}.
Furthermore, by \eqref{Euler-T-strong-conv+}, $\TT^{\bm\xi_\EPS}_{\lambda,\EPS}$ is in duality with $\nabla\vv_\EPS\in L^p(I;W^{1,p}(\varOmega;\R^{d\times d}))$. Also, $|\ee(\vv_\EPS)|^{p-2}\ee(\vv_\EPS)\in L^{p'}(I;L^{p'}(\varOmega;\R_{\rm sym}^{d\times d}))$ is in duality with $\nabla\vv_\EPS$ and $|\nabla\ee(\vv_\EPS)|^{p-2}\nabla\ee(\vv_\EPS)\in L^{p'}(I{\times}\varOmega;\R^{d\times d\times d})$ is in duality with
$\nabla^2\vv_\EPS\in L^{p}(I{\times}\varOmega;\R^{d\times d\times d})$. Since $\partial_{\Lp}^{}\!\zeta(\theta_\EPS;\LL_{\rm p\, \EPS})$ is single valued for $\LL_{\rm p \EPS}\neq \bm0$, the integral $\int_0^T\!\int_\varOmega \partial_{\Lp}^{}\!\zeta(\theta_\EPS,{\Lp}_\EPS){:}\LL_{\rm p \EPS}\,\d\xx\d t$ is, in fact, single-valued\INSERT{; in particular, the intergal does exist due to \eq{Euler-ass-zeta}.}
\COMMENT{OK ??}
Furthermore, thanks to \eqref{Euler-weak-sln-v3}, $\LL_{\rm p \EPS}\in L^\infty(I{\times}\varOmega{;\R^{d\times d}_{\rm dev}})$. Thus, $\partial_{\Lp}^{}\!\zeta(\theta_\EPS,{\Lp}_\EPS)\in L^\infty(I{\times}\varOmega)$ and hence it is {surely} in duality with $\LL_{\rm p \EPS}$.

We next check that $\LL_{\rm p \EPS}$ is a legal test in the flow rule
\eqref{Euler4=hypoplast-xi-eps}. In fact, we recall that
$\det(\nabla\bm\xi_\EPS)$ is in $L^\infty(I{\times}\varOmega)$ by
\eqref{Euler-weak-sln-v}. Thus, by the already established duality between
$\partial_{\Lp}^{}\!\zeta(\theta_\EPS,{\Lp}_\EPS)$ and $\LL_{\rm p \EPS}$, we
have also that $\LL_{\rm p \EPS}$ is in duality with the first term on the
right-hand side of \eqref{Euler4=hypoplast-xi-eps}. Also, we recall
from \eqref{Euler-weak-sln-v2} and \eqref{Euler-weak-sln-v3} that
${\Fe}_\EPS\in L^\infty(I{\times}\varOmega;\R^{d\times d}_{\rm dev})$ and
${\Fp}_\EPS\in L^\infty(I{\times}\varOmega;\R^{d\times d}_{\rm dev})$, thus
the left-hand side of \eqref{Euler4=hypoplast-xi-eps} is in duality
with $\LL_{\rm p \EPS}$. Thus, by comparison also the remaining term
${\rm div}(\nu_2|\nabla\LL_{\rm p \EPS}|^{q-2}\nabla\LL_{\rm p \EPS})$ is
in duality with $\LL_{\rm p \EPS}$.
\medskip

\medskip\noindent{\it Step 6 -- non-negativity of temperature}:%
We can now perform various nonlinear tests of the regularized but
non-discretized heat equation. The first test can be by the negative part
of temperature $\theta_\EPS^-:=\min(0,\theta_\EPS)$. Let us recall the
extension \eq{extension-negative+}, which in particular gives
$\OMEGA^{\bm\xi}(\FF,\theta^-)=|\theta|^{\alpha-1}\theta^-$ and 
$[\OMEGA_\FF']^{\bm\xi}(\FF,\theta^-)=\bm0$. Note also that $\theta_\EPS^-\in L^2(I;H^1(\varOmega))$,
so that it is indeed a legal test for the regularized heat equation.
Here we rely on the data qualification $\nu_0,\nu_1,\nu_2\ge0$,
$\kappa=\kappa^{\bm\xi}(\FF,\theta)\ge0$,
$\theta_0\ge0$, and $h(\theta)\ge0$ for $\theta\le0$, cf.\
(\ref{Euler-ass+}f--h,l).
Realizing that $\nabla\theta^-\!=0$ wherever $\theta>0$ so that
$\nabla\theta{\cdot}\nabla\theta^-=|\nabla\theta^-|^2$ and that
$\COUPLING'_{\Fe}(\Fe,\theta)\theta^-=\COUPLING'_{\Fe}(\Fe,\theta^-)\theta^-=0$
and $h(\theta)\theta^-=h(\theta^-)\theta^-=\bm0$, this test gives%
\begin{align}\nonumber
&\frac1{\alpha{+}1}\frac{\d}{\d t}\|\theta_\EPS^-\|_{L^{1+\alpha}(\varOmega)}^{1+\alpha}
+\kappa\|\nabla\theta_\EPS^-\|^2_{L^2(\varOmega;\R^d)}\le
\int_\varOmega\theta_\EPS^-\pdt{\W_\EPS}+
\kappa^{\bm\xi_{\EPS}}(\Fe_\EPS,\theta_\EPS)\nabla\theta_\EPS{\cdot}\nabla\theta_\EPS^-\,\d\xx\\
\nonumber &=
\int_\varGamma\frac{h(\theta_\EPS)}{1{+}\EPS h(\theta_\EPS)}\theta_\EPS^-\,\d S
+\!\int_\varOmega\!\Big(
\frac{\nu_0|\ee(\vv_{\EPS})|^{p}+\nu_1|\nabla\EE(\vv_{\EPS})|^p
+\partial_{\Lp}\!\zeta(\theta_{\EPS};\LL_{{\rm p}\,\EPS}){:}\LL_{{\rm p}\,\EPS}
+\nu_2|\nabla\LL_{{\rm p}\,\EPS}|^q\!\!}{1{+}\EPS|\EE(\vv_{\EPS})|^p
{+}\EPS|\nabla\EE(\vv_\EPS)|^p
{+}\EPS\partial_{\Lp}\!\zeta(\theta_{\EPS};\LL_{{\rm p}\,\EPS}){:}\LL_{{\rm p}\,\EPS}
{+}\EPS|\nabla\LL_{{\rm p}\,\EPS}|^q}\theta_\EPS^-
\!
\nonumber\\
&\hspace{10em}+\frac{\COUPLING'_{\Fe}(\Fe_\EPS,\theta_\EPS)\!}{\det\Fe_\EPS(1{+}\EPS|\theta_\EPS|^{\alpha})}{:}(\nabla\vv_{\EPS}\FF_{\rm e\,\EPS}-\FF_{{\rm e}\,\EPS}\LL_{\rm p\,\EPS})\theta_\EPS^-+\W_\EPS\vv_\EPS{\cdot}\nabla\theta_\EPS^-\Big)\,\d\xx\nonumber
\\[-.1em]\nonumber
&\le\!\int_\varOmega\!\W_\EPS\vv_\EPS{\cdot}\nabla\theta_\EPS^-\,\d\xx
=\!\int_\varOmega\!|\theta_\EPS|^{\alpha-1}\theta_\EPS^-\vv_\EPS{\cdot}\nabla\theta_\EPS^-\,\d\xx
=\!\int_\varOmega\!(1{-}\alpha)|\theta_\EPS|^{\alpha-1}\theta_\EPS^-\vv_\EPS{\cdot}\nabla\theta_\EPS^-
\!-|\nabla\theta_\EPS^-|^{\alpha+1}{\rm div}\,\vv_\EPS\,\d\xx
\\[-.1em]
&=-\frac1{1{+}\alpha}\int_\varOmega|\nabla\theta_\EPS^-|^{1+\alpha}{\rm div}\,\vv_\EPS\,\d\xx
\le\|\theta_\EPS^-\|_{L^{1+\alpha}(\varOmega)}^{1+\alpha}\|{\rm div}\,\vv_\EPS\|_{L^\infty(\varOmega)}^{}\,.
\label{Euler-thermo-test-nonnegative}
\end{align}
Recalling the assumption $\theta_0\ge0$ so that $\theta_{0,\EPS}^-=0$ and exploiting the
integrability of $\|\vv_\EPS\|_{L^\infty(\varOmega;\R^d)}$ inherited
from \eq{bound0c}, by the Gronwall inequality we obtain
$\|\theta_\EPS^-\|_{L^\infty(I;L^{1+\alpha}(\varOmega))}=0$, so that $\theta_\EPS\ge0$
a.e.\ on $I{\times}\varOmega$. 

Having proved non-negativity of temperature, we can repeat the arguments leading to \eqref{bounds000}-\eqref{cutoff}. By doing so, we obtain the bounds
\begin{align}\label{bounds0+}
\|\vv_{\EPS}\|_{L^p(I;W^{2,p}(\varOmega;\R^d))}\le C\,,\ \ 
\|{\LL}_{{\rm p}\,\EPS}\|_{L^2(I\times\varOmega;\mathbb R^{d\times d})}\le C\,,\ \text{ and }\
\|\nabla{\LL}_{{\rm p}\,\EPS}\|_{L^q(I\times\varOmega;\mathbb R^{d\times d\times d})}\le C\,,
\end{align}
which do not depend on $\EPS$.
Again, exploiting the regularity of the velocity field, using the regularity
of the solutions of the transport equations \eqref{Euler2=hypoplast-xi-eps}
and \eqref{Euler3=hypoplast-xi-eps}, we obtain
\begin{subequations}
\begin{align}
&\|\bm\xi_\EPS\|_{L^\infty(I;W^{2,r}(\varOmega;\R^d))}\le C\,,\ 
\ \|\det(\nabla\bm\xi_\EPS)\|_{L^\infty(I;W^{1,r}(\varOmega))}\le C\,,\ \text{ and }\
\|\FF_{{\rm p}\,\EPS k}\|_{L^\infty(I;W^{1,r}(\varOmega;\R^d))}\le C\,.
\end{align}
Similarly, using the transport equation for $\Fe$, we obtain
\begin{align}
	\|\FF_{\rm e\,\EPS}\|_{L^\infty(I;W^{1,r}(\varOmega;\R^{d\times d}))}\le C.
	\label{bounds2+}
\end{align}
\end{subequations}
By comparison, we also have
\begin{align}
&\label{est+dF/dt+}
\Big\|\pdt{\FF_{\rm e\,\EPS}}\Big\|_{L^{\min(p,q)}(I;L^r(\varOmega;\R^{d\times d}))}^{}\!\le C
\ \ \ \text{ and }\ \ \ \Big\|\frac{\partial\bm\xi_{\EPS}}{\partial t}\Big\|_{L^p(I;L^\infty(\varOmega))}\le C.
\end{align}
The estimates from the heat equation yield
\begin{align}\label{basic-est-of-theta-eps+}
\|w_\EPS\|_{L^\infty(I;L^1(\varOmega))}^{}\le C\ \ \ \text{ and }\ \ \
\|\theta_\EPS\|_{L^\infty(I;L^\alpha(\varOmega))}^{}\le C\,.	
\end{align}
Finally, observing again that the density $\rho_\EPS=\rhoRxieps\det(\nabla\bm\xi_\EPS)$
is the solution of the mass balance equation with initial condition in
$W^{1,r}(\varOmega)$, we also obtain the inequalities
\begin{align}
\|\rho_\EPS\|_{L^\infty(I;W^{1,r}(\varOmega))}\le C\ \ \ \text{ and }\ \ \
\Big\|\frac 1 {\rho_\EPS}\Big\|_{L^\infty(I;W^{1,r}(\varOmega))}\le C.
\end{align}
 
\medskip\noindent{\it Step 7 -- further a-priori estimates}:
We are to prove an estimate of $\nabla\theta_\EPS$ based on the test of the $\EPS$--regularized heat
equation by $\chi_\zeta(\theta_\EPS)$ with an increasing
nonlinear function $\chi_\zeta:[0,+\infty)\to[0,1]$ defined as
\begin{align}\label{test-chi+}
\chi_\zeta(\theta):=1-\frac1{(1{+}\theta)^\zeta}\,,\ \ \ \ \zeta>0\,,
\end{align}
simplifying the original idea of L.\,Boccardo and T.\,Gallou\"et 
\cite{BDGO97NPDE,BocGal89NEPE} in the spirit of \cite{FeiMal06NSET},
expanding the estimation strategy in \cite[Sect.\,8.2]{KruRou19MMCM}.
In particular, instead of the estimation of $\int_\varOmega|\nabla\theta_\EPS|^2\,\d\xx$ as in the $L^2$-theory of the heat equation, we shall estimate the integral $I_\zeta^{(2)}(\theta_\EPS)$ defined below in %
\eq{8-**-+}. Once that integral is estimated, one can make use of the Gagliardo-Niremberg interpolation inequality to obtain a bound on $\int_\varOmega|\nabla\theta_\EPS|^\mu\,\d\xx$ for all $1\le\mu<(d+2\ALPH)/(d+\ALPH)$
(see \eqref{est-W-eps+} below).
Importantly, here we have $\chi_\zeta(\theta_\EPS(t,\cdot))\in H^1(\varOmega)$,
hence it is a legal test function, because 
$0\le\theta_\varepsilon(t,\cdot)\in H^1(\varOmega)$ has already been proved
and because $\chi_\zeta$ is uniformly Lipschitz continuous on $[0,+\infty)$. 

We consider $1\le \EXP<2$ and estimate the $L^\EXP$-norm of $\nabla\theta_\varepsilon$
by H\"older's inequality as 
\begin{align}\nonumber
&\!\!\int_0^T\!\!\!\int_\varOmega|\nabla\theta_\varepsilon|^\EXP\,\d\xx\d t
=C\bigg(\underbrace{\int_0^T\!\!\!
\big\|1{+}\theta_\varepsilon(t,\cdot)\big\|^{(1+\zeta)\EXP/(\TWO-\EXP)}
_{L^{(1+\zeta)\EXP/(\TWO-\EXP)}(\varOmega)}\,\d\xx\,\d t}_{\displaystyle\ \ \ =:I_{\EXP,\zeta}^{(1)}(\theta_\varepsilon)}\bigg)^{1-\EXP/\TWO}
\bigg(\underbrace{\int_0^T\!\!\!\int_\varOmega
\chi_\zeta'(\theta_\EPS)|\nabla\theta_\EPS|^\TWO\,\d\xx\,\d t}
_{\displaystyle\ \ \ =:I_{\zeta}^{(2)}(\theta_\varepsilon)}\bigg)^{\EXP/\TWO}\!
\\[-2em]
\label{8-**-+}
\end{align}
with $\chi_\zeta$ from \eq{test-chi+}.
Then we interpolate the Lebesgue space $L^{(1+\zeta)\EXP/(\TWO-\EXP)}(\varOmega)$ between 
$W^{1,\EXP}(\varOmega)$ and $L^\ALPH(\varOmega)$ in order to exploit the already 
obtained $L^\infty(I;L^\ALPH(\varOmega))$-estimate  in \eq{basic-est-of-theta-eps+}.
More specifically, by the Gagliardo-Nirenberg inequality, using the already derived bounds on $\theta_\EPS$, we obtain
\begin{align}\nonumber
\big\|1{+}\theta_\varepsilon(t,\cdot)\big\|_{L^{(1+\zeta)\EXP/(\TWO-\EXP)}(\varOmega)}^{(1{+}\zeta)\EXP/(\TWO{-}\EXP)}\!\le C
\Big(1+\big\|\nabla\theta_\varepsilon(t,\cdot)\big\|_{L^\EXP(\varOmega;\R^d)}\Big)^{{\sigma}(1{+}\zeta)\EXP/(\TWO{-}\EXP)}
\\\qquad\text{ with }\ 0<{\sigma}\le\frac{\EXP d}{\EXP d{+}\EXP{-}d}\Big(1-\frac{\TWO-\EXP}{(1{+}\zeta)\EXP}\Big)\,,
\label{8-cond+}\end{align}
We choose $0<{\sigma}\le1$ in such a way to obtain the desired exponent
${\sigma}(1{+}\zeta)\EXP/(\TWO{-}\EXP)=\EXP$, i.e.\ ${\sigma}:=(\TWO{-}\EXP)/(1{+}\zeta)$, so that 
\begin{align}
\!\!I_{\EXP,\zeta}^{(1)}(\theta_\varepsilon)&
\le C\bigg(1+\int_0^T\!\!\!\int_\varOmega\big|\nabla \theta_\varepsilon\big|^\EXP\,\d\xx\d t\bigg)\,.\label{I1}
\end{align}
By substituing \eqref{I1} into \eqref{8-**-+} we see that the $L^\mu$--norm of $\nabla\theta_\EPS$ is estimated once we estimate 
$I_{\zeta}^{(2)}(\theta_\varepsilon)$ in \eq{8-**-+}. To this aim, let us denote 
by ${\cal X}_\zeta(\XX,\Fe,\cdot)$ a primitive function to
$\theta\mapsto\chi_\zeta(\theta)[\OMEGA_\theta'](\XX,\Fe,\theta)$ depending smoothly
on $\Fe$ and $\XX$, specifically
\begin{align}
{\cal X}_\zeta(\XX,\Fe,\theta)
=\int_0^1\!\!\theta\chi_\zeta(r\theta)\OMEGA_\theta'(\XX,\Fe,r\theta)\,\d r\,.
\label{primitive++}\end{align}
Thus it holds $[({\cal X}_\zeta)_\Fe']^{\bm\xi}(\Fe,\theta)
=\int_0^1\theta\chi_\zeta(r\theta)\OMEGA_{\Fe\theta}''(\Fe,r\theta)\,\d r$ and $[({\cal X}_\zeta)_\XX']^{\bm\xi}(\Fe,\theta)
=\int_0^1\theta\chi_\zeta(r\theta)\OMEGA_{\XX\theta}''(\Fe,r\theta)\,\d r$.
Similarly as \eq{Euler-thermodynam3-test+++}, we have now the calculus
\begin{align}\nonumber
&\int_\varOmega\chi_\zeta(\theta)\pdt\W\,\d\xx
=\int_\varOmega\chi_\zeta(\theta)\Big([\OMEGA_\theta']^{\bm\xi}(\Fe,\theta)\pdt{\theta}
+%
[\OMEGA_\Fe']^{\bm\xi}(\Fe,\theta){:}\pdt{\Fe}
+
[\OMEGA_\XX']^{\bm\xi}(\Fe,\theta){\cdot}\pdt{\bm\xi}\Big)
\,\d\xx
\\&\qquad \nonumber
=\frac{\d}{\d t}\int_\varOmega{\cal X}^{\bm\xi}_\zeta(\Fe,\theta)\,\d\xx
-\int_\varOmega
[(\mathscr{X}_\zeta)_\Fe']^{\bm\xi}(\Fe,\theta){:}\pdt{\Fe}
+[(\mathscr{X}_\zeta)_\XX']^{\bm\xi}(\Fe,\theta){\cdot}\pdt{\bm\xi}\,\d\xx
\nonumber\\
&\hskip 4em
\text{when abbreviating }\ \mathscr{X}_\zeta(\XX,\Fe,\theta):={\cal X}_\zeta(\XX,\Fe,\theta)
-\chi_\zeta(\theta)\OMEGA(\XX,\Fe,\theta)\,.
\label{Euler-thermodynam3-test+}\end{align}
Altogether, testing the regularized heat equation by $\chi_\zeta(\theta_\EPS)$ gives
\begin{align}\nonumber
&\frac{\d}{\d t}\int_\varOmega\![{\cal X}_\zeta]^{\bm\xi_\EPS}(\Fe_\EPS,\theta_\EPS)\,\d\xx
+\int_\varOmega
\chi_\zeta'(\theta_\EPS)\kappa^{\bm\xi_\EPS}(\Fe_\EPS,\theta_\EPS)|\nabla\theta_\EPS|^2\,\d\xx
\\&\nonumber\ \ 
=		\!\int_\varOmega\!\bigg(\frac{\nu_0|\EE(\vv_\EPS))|^p
+\nu_1|\nabla\EE(\vv_\EPS)|^p+\partial_{\Lp}\!\zeta(\theta_\EPS;\LL_{{\rm p}\,\EPS}){:}\LL_{\rm p\,\EPS}+\nu_2|\nabla\LL_{\rm p\,\EPS}|^q}{1{+}\EPS|\EE(\vv_\EPS)|^p{+}\EPS|\nabla\EE(\vv_\EPS)|^p{+}\EPS\partial_{\Lp}\!\zeta(\theta_\EPS;\LL_{{\rm p}\,\EPS}){:}\LL_{\rm p\,\EPS}{+}\EPS|\nabla\LL_{\rm p\,\EPS}|^q}
\chi_\zeta(\theta_\EPS)\nonumber
\\[-.3em]\nonumber
&\qquad+\OMEGA^{\bm\xi_\EPS}(\Fe_\EPS,\theta_\EPS)\chi_\zeta'(\theta_\EPS)\vv_\EPS{\cdot}\nabla\theta_\EPS
+[(\mathscr{X}_\zeta)_\Fe']^{\bm\xi_\EPS}(\Fe_\EPS,\theta_\EPS){:}\pdt{\Fe_\EPS}
+[(\mathscr{X}_\zeta)_\XX']^{\bm\xi_\EPS}(\Fe_\EPS,\theta_\EPS){\cdot}\pdt{\bm\xi_\EPS}
\\&\qquad
+\frac{\pi_\LAM(\Fe_\EPS)\COUPLING'_{\Fe}(\Fe_\EPS,\theta_\EPS)\Fe_\EPS^\top
{:}\ee(\vv_\EPS)}{\det_\LAM(\Fe_\EPS)(1{+}\EPS|\theta_\EPS|^{\alpha}
)}\chi_\zeta(\theta_\EPS)\bigg)\,\d\xx
+\!\int_\varGamma\!\frac {h(\theta_\EPS)}{1+|h(\theta_\EPS)|}\chi_\zeta(\theta_\EPS)\,\d S\,.
\label{Euler-thermodynam3-test++-}\end{align}
We realize that
$\chi_\zeta'(\theta)=\zeta/(1{+}\theta)^{1+\zeta}$ as used
already in %
\eq{8-**-+} and that ${\cal X}_\zeta(\Fe_\EPS,\theta_\EPS)\ge
c_K\theta_\EPS$ with some $c_K$ for $\theta_\EPS\ge0$ due to
\eq{Euler-ass-c-Euler-ass-W+}; again $K$ is a compact subset of ${\rm GL}^+(d)$
related here with the already proved estimates \eq{bounds2+} on $\FF_{\rm e\,\EPS}$.
The convective term in \eq{Euler-thermodynam3-test++-} can be estimated as
\begin{align}
\int_\varOmega\W_\EPS\chi_\zeta'(\theta_\EPS)\vv_\EPS{\cdot}\nabla\theta_\EPS\,\d\xx
=\int_\varOmega\varpi^{\bm\xi_\EPS}(\FF_{\rm e\,\EPS},\theta_\EPS)\chi_\zeta'(\theta_\EPS)\vv_\EPS{\cdot}\nabla\theta_\EPS\,\d\xx+\varsigma(\alpha{-}1)\int_\varOmega\theta^\alpha\chi_\zeta'(\theta_\EPS)\vv_\EPS{\cdot}\nabla\theta_\EPS\,\d\xx.
\label{est-of-convective0}\end{align}
The first integral on the right-hand side of \eqref{est-of-convective0} can
be estimated as follows:
\begin{align}
\nonumber
&\int_\varOmega\varpi^{\bm\xi_\EPS}(\FF_{\rm e\,\EPS},\theta_\EPS)\chi_\zeta'(\theta_\EPS)\vv_\EPS{\cdot}\nabla\theta_\EPS\,\d\xx
\le\|\vv_\EPS\|_{L^\infty(\varOmega)}\int_\varOmega{(\varpi^{\bm\xi_\EPS}(\FF_{\rm e\,\EPS},\theta_\EPS))^2}{\chi_\zeta'(\theta_\EPS)}\,\d\xx\\
&\quad+\frac1{\TWO\zeta}\int_\varOmega\chi_\zeta'(\theta_\EPS)|\nabla\theta_\EPS|^\TWO\,\d\xx\le
C\big(1+{\|\theta_\EPS\|_{L^1(\varOmega)}^{}}\big)
+\frac1{\TWO\zeta}\int_\varOmega\chi_\zeta'(\theta_\EPS)|\nabla\theta_\EPS|^\TWO\,\d\xx,
\label{est-of-convective}\end{align}
where we have used the linear growth of $\varpi$ with respect to temperature
and the {decay $\chi_\zeta'(\theta)=\mathscr{O}(1/\theta)$}.

The second integral on the right-hand side of \eqref{est-of-convective0} can
be estimated through integration by parts and an application of the Young's
inequality:
\begin{align}
\nonumber
&\varsigma(\alpha{-}1)\int_\varOmega\theta_\EPS^\alpha\chi_\zeta'(\theta_\EPS)\vv_{\EPS}\cdot\nabla\theta_\EPS\dx=%
-\varsigma(\alpha{-}1)\int_\varOmega\theta_\EPS^{\alpha}\chi_\zeta''(\theta_\EPS)\nabla\theta_\EPS\cdot\vv_\EPS+\theta_\EPS^{\alpha+1}\chi_\zeta'(\theta_\EPS){\rm div}\vv_\EPS\,\dx
\\
&\hspace{0em}-\varsigma\alpha(\alpha{-}1)\int_\varOmega\theta_\EPS^\alpha\chi_\zeta'(\theta_\EPS)\nabla\theta_\EPS\cdot\vv_\EPS
\nonumber
\nonumber
=-\varsigma\frac{\alpha{-}1}{\alpha{+}1}\int_\varOmega\theta_\EPS^{\alpha}\chi_\zeta''(\theta_\EPS)\nabla\theta_\EPS\cdot\vv_\EPS+\theta_\EPS^{\alpha+1}\chi_\zeta'(\theta_\EPS){\rm div}\vv_\EPS\,\dx
\nonumber
\\
&\hspace{0em}\le C \frac{\delta}2\|\vv_\EPS\|_{L^\infty(\varOmega;\R^d)}\int_\varOmega\frac{(\theta_\EPS^\alpha\chi_\zeta''(\theta_\EPS))^2}{\chi'_\zeta(\theta_\EPS)}+\frac 1 {2\delta}\int_\varOmega\chi'_\zeta(\theta_\EPS)|\nabla\theta_\EPS|^2
\nonumber
+\|{\rm div}\vv_\EPS\|_{L^\infty(\varOmega)}\|\theta_\EPS\|^{\alpha+1}_{L^{\alpha+1}(\varOmega)}
\nonumber
\\
&\hspace{0em}\le C \frac{\delta}2\|\vv_\EPS\|_{L^\infty(\varOmega;\R^d)}(1+\|\theta_\EPS\|^{\alpha+1}_{L^{\alpha+1}(\varOmega)})+\frac 1 {2\delta}\int_\varOmega\chi_\zeta'(\theta_\EPS)|\nabla\theta_\EPS|^2
\nonumber
+\|{\rm div}\vv_\EPS\|_{L^\infty(\varOmega)}\|\theta_\EPS\|^{\alpha+1}_{L^{\alpha+1}(\varOmega)},
\end{align}
where we have used $\chi_\zeta''(\theta_\EPS)/\chi_\zeta'(\theta_\EPS)\le C/(1+\theta_\EPS)$ so that, thanks to the assumption $\alpha\le 2$, altogether $(\theta^\alpha\chi_\zeta(\theta_\EPS)/\chi_\zeta'(\theta_\EPS))^2\le C(1+\theta^{2(\alpha-1)})\le C(1+\theta^{\alpha+1})$ . %
Denoting by $0<\kappa_0=\inf_{\XX,\FF,\theta}\kappa(\XX,\FF,\theta)$, and using
\eq{Euler-thermodynam3-test+++} integrated over $I=[0,T]$,
we further estimate:
\begin{align}\nonumber
&
I_{\zeta}^{(2)}(\theta_\EPS)=
\frac1\zeta\int_0^T\!\!\!\int_\varOmega
\chi_\zeta'(\theta_\EPS)|\nabla\theta_\EPS|^\TWO\,\d\xx\d t
\le\frac1{\kappa_0\zeta}\int_0^T\!\!\!\int_\varOmega\!\kappa^{\bm\xi_\EPS}(\Fe_\EPS,\theta_\EPS)
\nabla\theta_\EPS{\cdot}\nabla\chi_\zeta(\theta_\EPS)\,\d\xx\d t 
\\&\nonumber
\le\frac1{\kappa_0\zeta}\bigg(
\int_0^T\!\!\!\int_\varOmega\!\kappa^{\bm\xi_\EPS}(\Fe_\EPS,\theta_\EPS)
\nabla\theta_\EPS{\cdot}\nabla\chi_\zeta(\theta_\EPS)
\,\d\xx\d t+\int_\varOmega\![{\cal X}_\zeta]^{\bm\xi_\EPS(T)}(\Fe_\EPS(T),\theta_\EPS(T))\,\d\xx\bigg)
\\&\nonumber
=\frac1{\kappa_0\zeta}\bigg(\int_\varOmega\![{\cal X}_\zeta]^{\bm\xi_0}(\Fe_0,\theta_{0,\EPS})\,\d\xx
\\ \nonumber &\hspace{2em}+\!\int_0^T\!\!\!\int_\varOmega\!\bigg(\frac{\nu_0|\EE(\vv_\EPS))|^p{+}\nu_1|\nabla\EE(\vv_\EPS)|^p+\partial_{\Lp}\!\zeta(\theta_\EPS;\LL_{{\rm p}\,\EPS}){:}\LL_{\rm p\,\EPS}+\nu_2|\nabla\LL_{\rm p\,\EPS}|^q}{1{+}\EPS|\EE(\vv_\EPS)|^p{+}\EPS|\nabla\EE(\vv_\EPS)|^p{+}\EPS\partial_{\Lp}\!\zeta(\theta_\EPS;\LL_{{\rm p}\,\EPS}){:}\LL_{\rm p\,\EPS}{+}\EPS|\nabla\LL_{\rm p\,\EPS}|^q}
\chi_\zeta(\theta_\EPS)
\\&\nonumber\hspace{3em}
+[(\mathscr{X}_\zeta)_\Fe']^{\bm\xi_\EPS}(\Fe_\EPS,\theta_\EPS){:}\pdt{\Fe_\EPS}
+[(\mathscr{X}_\zeta)_\XX']^{\bm\xi_\EPS}(\Fe_\EPS,\theta_\EPS){\cdot}\pdt{\bm\xi_\EPS}
+\OMEGA^{\bm\xi_\EPS}(\Fe_\EPS,\theta_\EPS)\chi_\zeta'(\theta_\EPS)\vv_\EPS{\cdot}\nabla\theta_\EPS
\\\nonumber&\hspace{3em}
+\frac{\pi_\LAM(\Fe_\EPS)[\COUPLING'_{\Fe}]^{\bm\xi_\EPS}(\Fe_\EPS,\theta_\EPS)\Fe_\EPS^\top
{:}\ee(\vv_\EPS)}{\det_\LAM(\Fe_\EPS)(1{+}\EPS|\theta_\EPS|^{\alpha})}
\chi_\zeta(\theta_\EPS)\bigg)\,\d\xx
+\!\int_0^T\!\!\!\int_\varGamma\!\frac{h(\theta_\EPS)}{1+\EPS |h(\theta_\EPS)|}
\chi_\zeta(\theta_\EPS)\,\d S\bigg)
\\&\nonumber
\le\frac1{\kappa_0\zeta}\bigg(
\big\|\nu_0|\EE(\vv_\EPS)|^p{+}\nu_1|\nabla\EE(\vv_\EPS)|^p+\partial_{\Lp}\!\zeta(\theta_\EPS;\LL_{{\rm p}\,\EPS}){:}\LL_{\rm p\,\EPS}\!+\nu_2|\nabla\LL_{\rm p\,\EPS}|^q\big\|_{L^1(I\times\varOmega)}\!
\\&\ \nonumber
+\big\|
[(\mathscr{X}_\zeta)_\Fe']^{\bm\xi_\EPS}(\Fe_\EPS,\theta_\EPS)
\big\|_{L^{\min(p,q)'}(I;L^{r'}(\varOmega;\R^{d\times d}))}^{}
\Big\|\pdt{\FF_{\rm e\,\EPS}}\Big\|_{L^{{\rm min}(p,q)}(I;L^r(\varOmega;\R^{d\times d}))}^{}
\\&\ \nonumber
+\big\|
[(\mathscr{X}_\zeta)_\XX']^{\bm\xi_\EPS}(\Fe_\EPS,\theta_\EPS)
\big\|_{L^{p'}(I;L^{1}(\varOmega;\R^d))}^{}
\Big\|\pdt{\bm\xi_{\EPS}}\Big\|_{L^p(I;L^\infty(\varOmega;\R^d))}^{}
+\big\|h_{\max}\big\|_{L^1(I\times\varGamma)}\!
\\&\nonumber\ 
+\Big\|\frac{\pi_\LAM(\Fe_\EPS)[\COUPLING'_{\Fe}]^{\bm\xi_\EPS}(\Fe_\EPS,\theta_\EPS)\Fe_\EPS^\top}{{\det}_\lambda\Fe_\EPS}
{:}\ee(\vv_\EPS)\Big\|_{L^1(I\times\varOmega)}\!\!
\\[-.2em]&\ 
+C\big(1{+}{\|\theta_\EPS\|_{L^1(\varOmega)}^{}}\big)+
C \frac{\delta}2\|\vv_\EPS\|_{L^\infty(\varOmega;\R^d)}\big(1{+}\|\theta_\EPS\|^{\alpha+1}_{L^{\alpha+1}(\varOmega)}\big)
\nonumber
\\[-.4em]&\ 
+\|{\rm div}\vv_\EPS\|_{L^\infty(\varOmega)}\|\theta_\EPS\|^{\alpha+1}_{L^{\alpha+1}(\varOmega)}
+\big\|{\cal X}_\zeta(\Fe_0,\theta_{0,\EPS})\big\|_{L^1(\varOmega)}\!
+\frac1{\delta}\!\int_0^T\!\!\!\int_\varOmega
\chi_\zeta'(\theta_\EPS)|\nabla\theta_\EPS|^\TWO\,\d\xx\d t\!\bigg)\,.
\nonumber
\end{align}
The second term in the last element of the above chain of inequalities if bounded because we already proved that dissipation in bounded, thanks to %
\eq{bounds0+}. Furthermore, in view of assumption \eq{Euler-ass-primitive-c+}, both
$|[({\cal X}_\zeta)_\Fe']^{\bm\xi_\EPS}(\Fe,\theta)|\le C(1+\theta^{(1+\alpha)/r'-1})$ and
$|\chi_\zeta(\theta_\EPS)[\OMEGA_\Fe']^{\bm\xi_\EPS}(\Fe,\theta)|\le C(1+\theta^{(1+\alpha)/r'-1})$. Thus, $[({\cal X}_\zeta)_\Fe']^{\bm\xi_\EPS}(\Fe,\theta)-\chi_\zeta(\theta_\EPS)[\OMEGA_\Fe']^{\bm\xi_\EPS}(\Fe,\theta)|$ is bounded in
$L^\infty(I;L^{((1+\alpha)r')/(1+\alpha-r')}(\varOmega))\subset L^\infty(I;L^{r'}(\varOmega))$. Likewise thanks to \eqref{Euler-ass-primitive-c+}, we have $|[({\cal X}_\zeta)_\XX']^{\bm\xi_\EPS}(\Fe,\theta)|\le C(1+\theta^\alpha)$ and $|\chi_\zeta(\theta_\EPS)[\OMEGA_\XX']^{\bm\xi_\EPS}(\Fe,\theta)|\le C(1+\theta^{\alpha})$, so that altogether
$[({\cal X}_\zeta)_\XX']^{\bm\xi_\EPS}(\Fe,\theta)-\chi_\zeta(\theta_\EPS)[\OMEGA_\XX']^{\bm\xi_\EPS}(\Fe,\theta)|$ is bounded in $L^\infty(I;L^{1+1/\alpha}(\varOmega))\subset L^{p'}(I;L^1(\varOmega))$. Due to the assumption \eq{Euler-ass-adiab+}, we can estimate the adiabatic rate as
\begin{equation}
\begin{aligned}
\int_\varOmega\!\! \frac{\pi_\LAM(\Fe_\EPS)\COUPLING'_{\Fe}(\Fe_\EPS,\theta_\EPS)\Fe_\EPS^\top}{{\det}_\lambda\Fe_\EPS}{:}\ee(\vv_\EPS)\,\d\xx &\le C \int_\varOmega\!C(1+\theta_\EPS^{\alpha%
})|\ee(\vv_\EPS)|\,\d\xx\\
&\le C(1+\|\theta_\EPS\|_{L^1(\varOmega)}+\|\ee(\vv_\EPS)\|_{L^p(\varOmega;\R^{d\times d})})\le C.
\end{aligned}\nonumber
\end{equation}
Here we exploited the boundedness of $\Fe\mapsto 1/{\det}_\lambda\Fe$. Recall that the truncation parameter $\lambda$ ultimately depends on the data of the problem.
Altogether, we have $I_{\zeta}^{(2)}(\theta_\EPS)$ estimated. 
Choosing $\sigma:=(\TWO{-}\EXP)/(1{+}\zeta)$ for \eq{8-cond+}
with $\zeta>0$ arbitrarily small, one gets after some algebra,
the condition $\EXP<(d{+}2\ALPH)/(d{+}\ALPH)$.

Altogether, we proved
\begin{subequations}\label{est-W-eps+}\begin{align}
&\|\theta_\EPS\|_{L^\infty(I;L^1(\varOmega))\,\cap\,L^\EXP(I;W^{1,\EXP}(\varOmega))}^{}\le C_\EXP
\ \ \text{ with }\ 1\le\EXP<\frac{d{+}2\ALPH}{d{+}\ALPH}\,.
\intertext{Exploiting the calculus
$\nabla\W_{\EPS}=[\OMEGA_\theta']^{\bm\xi_\EPS}(\Fe_{\EPS},\theta_{\EPS})\nabla\theta_{\EPS}
+[\OMEGA_\Fe']^{\bm\xi_\EPS}(\Fe_{\EPS},\theta_{\EPS})\nabla\Fe_{\EPS}+[\OMEGA_\XX']^{\bm\xi_\EPS}(\Fe_{\EPS},\theta_{\EPS})\nabla\bm\xi_{\EPS}$
with $\nabla\Fe_{\EPS}$ bounded in $L^\infty(I;L^r(\varOmega;\R^{d\times d\times d}))$ and $\nabla\bm\xi_\EPS$ bounded in $L^\infty(I;W^{1,r}(\varOmega;\R^{d\times d}))$
and relying on the assumption \eq{Euler-ass-primitive-c+},
we have also the
bound on $\nabla\W_{\EPS}$ in $L^\EXP(I;L^{\EXP^*d/(\EXP^*+d)}\varOmega;\R^d)$, so
that %
}
&\|\W_\EPS\|_{L^\infty(I;L^{\alpha+1}(\varOmega))\,\cap\,
L^\mu(I;L^{{\mu(1+\alpha)}/({\alpha\mu+\alpha+1})}(\varOmega))}\le C\,.
\end{align}\end{subequations}

\medskip

\noindent{\it Step 8: Limit passage for $\EPS\to0$}.
We use the Banach selection principle as in Step~5. We extract some subsequence
of $(\vv_{\EPS},\bm\xi_{\EPS},\FF_{{\rm p},\EPS},\LL_{{\rm p},\EPS},\theta_{\EPS},w_{\EPS})$
and a limit $(\vv,\bm\xi,\Fp,\Lp,\theta,w)
:I\to W^{2,p}(\varOmega;\R^d)\times W^{2,r}(\varOmega;\R^{d})\times W^{1,r}(\varOmega;\R^{d\times d}_{\rm dev})\times W^{1,q}(\varOmega;\R^{d\times d}_{\rm dev})\times W^{1,\mu}(\varOmega)\times W^{\mu^*d/(\mu^*+d)}(\varOmega)$
such that%
\begin{subequations}%
\begin{align}
&\!\!\vv_{\EPS}\to\vv&&\text{weakly in $\ L^p(I;W^{2,p}(\varOmega))$}\,,\label{Euler-weak-sln-v+}
\\\nonumber
&\!\!\bm\xi_{\EPS}\to\bm\xi&&\text{weakly* in $\
L^\infty(I;W^{2,r}(\varOmega;\R^d))\cap W^{1,p}(I;W^{1,r}(\varOmega;\R^d))$}\!\!&&
\\
&&&\hspace{11em}\text{and strongly in }C(I\times\barvarOmega;\R^d)\,,
\\\nonumber
&\!\!\FF_{{\rm p}\,\EPS}\to\Fp\!\!\!&&\text{weakly* in $\ L^\infty(I;W^{1,r}(\varOmega;\R^{d\times d}))\,\cap\,W^{1,q}(I;L^r(\varOmega;\R^{d\times d}))$}\!\!
\\
& &&\hspace{11em}\text{and strongly in }C(I\times\barvarOmega;\R^{d\times d})\,,
\\
\label{bounded}
&\!\!\LL_{{\rm p}\,\EPS}\to\Lp&&\text{weakly* in $\ L^\infty(I;W^{1,q}(\varOmega;\R^{d\times d}_{\rm dev}))$,}\!\!&&
\\
&\!\!\theta_{\EPS}\to\theta&&
\text{weakly in $L^\mu(I;W^{1,\mu}(\varOmega))$}\,,
\label{Euler-weak-sln-theta++++}
\\
&\!\!\W_{\EPS}\to\W&&
\text{weakly in $\ L^\mu(I;W^{\mu^*d/(\mu^*+d)}(\varOmega))$}\,.
\label{Euler-weak-sln-w++++}
\intertext{By the analogous argumentation as in \eq{strong-hyper+} with $C_\EPS$ now indepedent of $\EPS$, we prove}
&{\ee(\vv_{\EPS})\to\ee(\vv)}&&\text{strongly in 
$\,L^p(I;W^{1,p}(\varOmega;\R_{\rm sym}^{d\times d}))\,$}.
\intertext{Then, arguing as in the derivation of \eqref{w-conv+}, we deduce
}
&\!\!w_\EPS\to w &&\text{strongly in }L^c(I{\times}\varOmega),\quad
1\le c<\frac{2\alpha{+}d}{\alpha d}\,,
\intertext{and then, using again the continuity of $(\XX,\Fe,w)\mapsto [\omega(\XX,\Fe,\cdot)]^{-1}(w)$ as in \eqref{z-conv+}, we also have
}
&\!\!\theta_\EPS\to\theta=[\omega^{\bm\xi}(\Fe,\cdot)]^{-1}(w)\hspace*{-3em}
&&\hspace*{3em}\text{strongly in }L^c(I{\times}\varOmega),\quad 1\le c<1+2\alpha/d\,.
\intertext{By the continuity of $\varphi_{\Fe}'$, $\gamma_{\Fe}'$, ${\det}_\lambda$, and $\kappa$, we have also %
}
&\!\!\kappa^{\bm\xi_\EPS}(\Fe_{\EPS},\theta_\EPS)\to\kappa^{\bm\xi}(\Fe,\theta)\hspace*{-3em}
&&\hspace*{3em}\text{strongly in }L^c(I{\times}\varOmega),\quad 1\le c<\infty,\text{ and}
\end{align}
\vskip-2em
\begin{align}
&\!\!\TT^{\bm\xi_\EPS}_{\lambda,\EPS}\to \TT^{\bm\xi}_\lambda=\frac{[(\pi_\lambda\varphi)']^{\bm\xi}(\Fe)+\pi_\lambda(\Fe)[\gamma'_{\Fe}]^{\bm\xi}(\Fe,\theta)}{{\det}\Fe}\Fetop &&\ \text{strongly in }L^c(I{\times}\varOmega)
\nonumber\\[-.5em]
&&&\text{ for any }1\le c<%
\frac{2\alpha{+}d}{\alpha d}.
\end{align}
\end{subequations}
Using the same argument as that used to obtain \eqref{strong-conv+} and \eqref{strong-conv-Lp} we deduce the strong convergence of $\ee(\vv_\EPS)$, $\nabla\ee(\vv_\EPS)$, $\LL_{\rm p\,\EPS}$ and $\nabla\LL_{\rm p\,\EPS}$ to the corresponding limits. The result is
\begin{subequations}\label{weak223}
\begin{align}
&&&\vv_\EPS\to\vv&&\text{strongly in }L^p(I;W^{2,p}(\varOmega;\R^d))\,,	
\\&&&\LL_{\rm p\EPS}\to\LL_{\rm p}&&\text{strongly in }L^2(I{\times}\varOmega;\R_{\rm dev}^{d\times d})\,,\text{ and}&&&&&&&&\\
&&&\nabla\LL_{\rm p\EPS}\to \nabla\LL_{\rm p}&&\text{strongly in }L^p(I{\times}\varOmega;\R^{d\times d\times d})\,.&&
\end{align}
\end{subequations}
The first of the above convergences allow us to pass to the limit in the momentum balance equation. From \eqref{weak223} we obtain that, up to a subsequence, $\LL_{\rm p\EPS}$ converges to $\Lp$ almost everywhere in $I{\times}\overline\varOmega$. Because, of \eqref{bounded}, $\|\LL_{\rm p\EPS}\|_{L^\infty(I\times\varOmega;\R^{d\times d})}\le C$, and thus also $\|\zeta(\theta_\EPS,\LL_{\rm p\EPS})\|_{L^\infty(I\times\varOmega)}\le \|\zeta_\text{\sc m}(\LL_{\rm p\EPS})\|_{L^\infty(I\times\varOmega)}\le C$ so that by the bounded convergence theorem $\zeta(\theta_\EPS;\LL_{\rm p\EPS})$ converges to $\zeta(\theta;\LL_{\rm p})$ a.e.\ in $I{\times}\varOmega$\INSERT{ here
$\zeta_\text{\sc m}$ comes from \eq{Euler-ass-zeta}.}
. A similar argument applies to the right-hand side of the flow rule. In fact, the strong convergence of $\FF_{\rm e\EPS}$ and $\FF_{\rm p\EPS}$ and their uniform boundedness in $I{\times}\varOmega$ entail that
\begin{align}
{\rm dev}\Big(\FF^{\top}_{\rm e\,\EPS}[(\pi_\lambda\varphi)_{\Fe}']^{\bm\xi_\EPS}(\FF_{\rm e\,\EPS})+\pi_\lambda(\FF_{\rm e\,\EPS})\FF_{\rm e\,\EPS}^{\top}\frac{[\gamma_{\Fe}']^{\bm\xi_\EPS}(\FF_{\rm e\,\EPS},\theta_\EPS)}{1+\EPS|\theta_\EPS|^\alpha}-[\phi'_{\Fp}]^{\bm\xi_\EPS}(\FF_{\rm p\,\EPS})\FF_{\rm p\,\EPS}^\top\Big)
\end{align}
converges a.e.\ in $I{\times}\varOmega$ and is bounded
\INSERT{ in $L^\infty(I{\times}\varOmega;\R_{\rm dev}^{d\times d})$} by a constant.
This allows the passage to the limit also in the flow rule.

\medskip\noindent{\it Step 9: The original problem and the energy balance}.
Let us note that the limit $\Fe$ lives in
$L^{\infty}(I;W^{1, r}(\varOmega;\R^{d \times d}))\cap W^{1,\min(p,q)}(I;L^r(\varOmega;\R^{d\times d}))$,
cf.\ \eqref{bounds2+} and \eqref{est+dF/dt+}, and this space is embedded into
$C(I{\times}\barvarOmega;\R^{d\times d})$ since $r>d$. Therefore $\Fe$ and its determinant
evolve continuously in time, being valued respectively in $C(\barvarOmega;\R^{d \times d})$ and
$C(\barvarOmega)$. Let us recall that at the initial time $\Fe(0)=\nabla\bm\xi_0^{-1}\Fp^{-1}$
complies with the bounds \eqref{cutoff} and that this holds also for the $\lambda$-regularized system. Therefore $\Fe$ satisfies these bounds not only at $t=0$ but also at least for small times. Yet, in view of the choice of $\lambda$, this means that the $\lambda$-regularization is nonactive and $(\vv,\bm\xi,\Lp,\FF_{\mathrm{p}},\theta)$ solves, at least for a small time, the original nonregularized problem for which the a priori $L^{\infty}$-bounds \eqref{cutoff3}--\eqref{cutoff2} hold. By the continuation argument, we may see that the $\lambda$-regularization remains therefore inactive within the whole evolution of $(\vv,\boldsymbol{\xi},\LL_{\rm p},\FF_{\rm p},\theta)$ on the whole time interval $I$. In particular,
the calculations which led to \eqref{energetics1}
and \eqref{tot-energy-balance} are legitimate.
\end{proof}

\begin{remark}[{\sl Role of inertia}]\label{rem-inert}\upshape
Involving inertia in the model is essential to bootstrap the process that leads to the formal estimates. A key part of our proof is the derivation of the integrability estimates on the velocity field $\vv$ and on the plastic-strain rate $\Lp$. These estimates are naturally obtained by bounding the dissipative terms in the energy-dissipation balance \eqref{energetics1}. However, we cannot resort to \eqref{energetics1} right away, because of the estimation of the adiabatic term on the right-hand side. Thus, we must first resort to usage of the total-energy balance. To use the latter, however, we need to control the power of the applied forces, which involves the velocity, and the only way (at least to our knowledge) is to exploit the kinetic energy term on the left-hand side. Altogether, the optically simpler quasistatic variant, which works in isothermal situations
\cite{Roub22QHLS}, would be troublesome in the full anisothermal model.  
\end{remark}

\begin{remark}[Assumptions on $\gamma(\Fe,\theta)$]\upshape
The requirement \eqref{Euler-ass-adiab+} on $\gamma'_{\Fe}$  guarantees that {adiabatic effects} {have not} bigger growth {in $\theta$ than} the function $\omega$. This is needed to apply a Gronwall-type argument in order to derive a priori estimates on the temperature, \emph{cf.} \eqref{basic-est-of-theta-eps+} {above}. On the other hand, assumption \eqref{Euler-ass-primitive-c+}, is essentially technical, related to our proof strategy {controlling} the partial derivatives of the enthalpy when estimating the temperature field (see \eqref{est-of-convective}).
\end{remark}

\begin{remark}[Assumptions on $r$]\upshape
Beside having $r>d$, the particular requirement $r>\alpha$ is needed to establish the estimate \eqref{Euler-weak-sln-w} {above}, which {was} needed to prove the weak convergence ofthe adiabatic terms.
\end{remark}

{\small
\bibliographystyle{plain}
\bibliography{tr-gt-Eulerian-thermoplast}
}

\end{sloppypar}\end{document}